\renewcommand{\p@subfigure}{\thefigure-}
\newcolumntype{+}{>{\global\let\currentrowstyle\relax}}
\newcolumntype{^}{>{\currentrowstyle}}
\newcommand{\rowstyle}[1]{\gdef\currentrowstyle{#1}#1\ignorespaces}
\newcommand{\brow}{\rowstyle{\bfseries}}
\let\csname equation*\endcsname\relax
\let\csname endequation*\endcsname\relax
\newcommand{\NN}{\ensuremath{\mathbb N}}
\newcommand{\ZZ}{\ensuremath{\mathbb Z}}
\newcommand{\RR}{\ensuremath{\mathbb R}}
\newcommand{\HH}{\ensuremath{{\mathcal H}}}
\newcommand{\RP}{\ensuremath{\left[0,+\infty\right[}}
\newcommand{\RPPbis}{\ensuremath{\left]0,+\infty\right[}}
\newcommand{\RX}{\ensuremath{\left]-\infty,+\infty\right]}}
\newcommand{\DD}{\ensuremath{\mathbb D}}
\DeclarePairedDelimiter{\parens}{(}{)}
\DeclarePairedDelimiter{\bracks}{[}{]}
\DeclarePairedDelimiter{\braces}{\{}{\}}
\DeclarePairedDelimiter{\abs}{\lvert}{\rvert}
\DeclarePairedDelimiter{\norm}{\lVert}{\rVert}
\newcommand{\minimize}[2]{\ensuremath{\underset{\substack{{#1}}}{\operatorname{minimize}}\;\;#2}}
\newcommand{\scal}[2]{{\left\langle{{#1}\mid{#2}}\right\rangle}}
\newcommand{\menge}[2]{\big\{{#1}~\big |~{#2}\big\}} 
\newcommand{\emp}{\ensuremath{{\varnothing}}}
\newcommand{\lev}[1]{{\ensuremath{{{{\operatorname{lev}}}_{\leq #1}}\,}}}
\newcommand{\pinf}{\ensuremath{{+\infty}}}
\DeclareMathOperator{\subto}{s.t.}
\newcommand{\bell}{\boldsymbol{\ell}}
\newcommand{\epi}{\operatorname{epi}}
\newcommand{\Id}{\ensuremath{\operatorname{Id}}\,}
\newcommand{\dom}{\ensuremath{\operatorname{dom}}}
\newcommand{\prox}{\ensuremath{\operatorname{prox}}}
\newcommand{\reli}{\ensuremath{\operatorname{ri}}}
\newtheorem{theorem}{Theorem}[section]
\newtheorem{corollary}[theorem]{Corollary}
\newtheorem{proposition}[theorem]{Proposition}
\theoremstyle{definition}
\newtheorem{problem}[theorem]{Problem}			
\begin{document}

\title{Epigraphical splitting for solving constrained convex formulations of inverse problems with proximal tools}

\author{G.~Chierchia\thanks{Institut Mines-T\'el\'ecom, T\'el\'ecom ParisTech, CNRS LTCI, 75014 Paris, France}, N.~Pustelnik\thanks{ENS Lyon, Laboratoire de Physique, UMR CNRS 5672, F69007 Lyon, France}, J.-C. Pesquet\thanks{Universit\'e Paris-Est, LIGM, UMR CNRS 8049, 77454 Marne-la-Vall\'ee, France}, and B.~Pesquet-Popescu\footnotemark[1]}

\maketitle

\begin{abstract}
We propose a proximal approach to deal with a class of convex variational problems involving nonlinear constraints. A large family of constraints, proven to be effective in the solution of inverse problems, can be expressed as the lower level set of a sum of convex functions evaluated over different, but possibly overlapping, blocks of the signal. For such constraints, the associated projection operator generally does not have a simple form. We circumvent this difficulty by splitting the lower level set into as many epigraphs as functions involved in the sum. A closed half-space constraint is also enforced, in order to limit the sum of the introduced epigraphical variables to the upper bound of the original lower level set. In this paper, we focus on a family of constraints involving linear transforms of distance functions to a convex set or $\bell_{1,p}$ norms with $p\in \{1,2,\pinf\}$. In these cases, the projection onto the epigraph of the involved function has a closed form expression.

The proposed approach is validated in the context of image restoration with missing samples, by making use of constraints based on Non-Local Total Variation. Experiments show that our method leads to significant improvements in term of convergence speed over existing algorithms for solving similar constrained problems. A second application to a pulse shape design problem is provided in order to illustrate the flexibility of the proposed approach.
\end{abstract}


\section{Introduction}\label{sec:into}

As an offspring of the wide interest in frame representations and sparsity promoting techniques for data recovery, proximal methods have become popular for solving large-size non-smooth convex optimization problems \cite{Rockafellar_R_1976_j-siam-jco, Combettes_P_2010_inbook_proximal_smsp, Bach_F_2012_j-ftml_opt_sip}. The efficiency of these methods in the solution of inverse problems has been widely studied in the recent signal and image processing literature (see for instance \cite{Guerquin_M_2011_j-ieee-tmi_fas_wbr, Dupe_FX_2008_ip_proximal_ifdpniusr, Aujol_JF_2006_ijcv_structure_tidmaps, Briceno_L_2011_j-math-imaging-vis_pro_ami, Theodoridis_S_2011_j-ieee-spm_adaptive_lwp, Chaux_C_2012_j-miv_parallel_psm} and references therein). Even if proximal algorithms and the associated convergence properties have been deeply investigated 
\cite{Combettes_PL_2005_j-siam-mms_Signal_rbpfbs, Chaux_C_2007_j-ip_variational_ffbip, Combettes_PL_2007_istsp_Douglas_rsatncvsr, Combettes_PL_2008_j-ip_proximal_apdmfscvip}, 
some questions persist in their use for solving inverse problems. A first question is: how can we set the parameters serving to enforce the regularity of the solution in an automatic way? Various strategies were proposed in order to address this question \cite{Galatsanos_N_1992_tip_methods_crpenvirtr, Hansen_P_1993_j-siam-sci-comp_use_lcr, Pizurica_A_2006_j-ieee-tip_est_pps, Ramani_S_2008_tip_monte_csbborpgda, Chaari_L_2010_j-tsp_hierarchical_bmfr}, but the computational cost of these methods is often high, especially when several regularization parameters have to be set. Alternatively, it has been recognized for a long time that incorporating constraints directly 
on the solutions \cite{Youla_DC_1982_tmi_POCS_irbtmopocs, Trussell_H_1984_tassp_feasible_ssp, Combettes_PL_1994_tsp_Inconsistent_sfplssiaps, Kose_K_2012_p-icassp_fil_vmd, Teuber_T_2012_j-inv-prob_minimization_pes, Stuck_R_2012_j-ip_iteratively_rgn, Ono_S_2013_p-icassp_poisson_irl}, instead of considering regularized functions, may often facilitate the choice of the involved parameters. Indeed, in a constrained formulation, the constraint bounds are usually related to some physical properties of the target solution or some knowledge of the degradation process, e.g. the noise statistical properties. Note also that there exist some conceptual Lagrangian equivalences between regularized solutions to inverse problems and constrained ones, although some caution should be taken when the regularization functions are nonsmooth (see \cite{Steidl_G_2012_j-math-imaging-vis_hom_pcc} where the case of a single regularization parameter is investigated).

Another question is related to the selection of the most appropriate algorithm within the class of proximal methods according to a given application. This also raises the question of the computation of the proximity operators associated with the different functions involved in the criterion. In this context, the objective of this paper is to propose an efficient splitting technique for solving some constrained convex optimization problems of the form:
\begin{problem} \label{p:gen}
\begin{equation}
\underset{x\in \HH}{\mathrm{minimize}}\; \sum_{r=1}^R g_r(T_rx)\quad\subto\quad 
\begin{cases}
H_1x\in C_1,\\
\dots\\
H_Sx\in C_S,
\end{cases}
\end{equation}
where $\HH$ is a real Hilbert space, and
\begin{enumerate}
\item for every $r\in\{1,\ldots,R\}$, $T_r$ is a bounded linear operator from $\HH$ to $\RR^{N_r}$, 
\item for every $r\in\{1,\ldots,R\}$, $g_r\colon \RR^{N_r}\mapsto \RX$ is a proper lower-semicontinuous convex function,
\item for every $s\in \{1,\ldots,S\}$, $H_s$ is a bounded linear operator from $\HH$ to $\RR^{M_s}$,
\item for every $s \in \{1,\ldots,S\}$, $C_s$ is a nonempty closed convex subset of $\RR^{M_s}$.
\end{enumerate}
\end{problem}

More precisely, the present work aims at designing a method to address Problem~\ref{p:gen} when some convex constraints are expressed as follows: for some $s\in \{1,\ldots,S\}$,
\begin{equation}\label{e:levconstintro}
(\forall x\in \HH)\qquad H_s x \in C_s \quad\Leftrightarrow\quad h_s(H_s x) \le \eta_s,
\end{equation}
where $\eta_s \in \RR$ and $h_s$ is a proper lower-semicontinuous convex function from $\RR^{M_s}$ to $\RX$. Indeed, the projection onto the convex set $C_s$ defined in \eqref{e:levconstintro} often does not have a closed form expression. In the present work, we will show that: 
\begin{enumerate}
\item when the function $h_s$ in \eqref{e:levconstintro} corresponds to a \emph{decomposable loss}, i.e.\ it can be expressed as the sum of functions evaluated over different blocks of the vector $H_sx$, the problem of computing the projection onto the associated convex set $C_s$ can be addressed by resorting to a splitting approach that decomposes the set $C_s$ into a collection of epigraphs and a half-space;
\item the projection operator associated with an epigraph (namely the \emph{epigraphical projection}) has a closed form for some functions of practical interest, such as the absolute value raised to a power $q\in [1,+\infty[$, the distance to a convex set and the $\bell_{p}$-norm with $p\in \{2,\pinf\}$;
\item in the context of image restoration, regularity constraints based on Total Variation \cite{Rudin_L_1992_tv_atvmaopiip} and Non-Local Total Variation \cite{Gilboa_G_2009_j-siam-mms_nonlocal_oai} can be efficiently handled by the proposed epigraphical splitting, which significantly speeds up the convergence (in terms of execution time) with respect to standard iterative solutions \cite{VanDenBerg_E_2008_j-siam-sci-comp_pro_pfb,Quattoni_A_2009_p-icml_efficient_plr}.
\end{enumerate}

The paper is organized as follows. In Section~\ref{sec:rec}, we review the algorithms which are applicable for solving large-size convex optimization problems, so motivating the choice of proximal methods, and we review the variable-splitting techniques commonly used with these methods. In order to deal with a constraint expressed under the form \eqref{e:levconstintro}, we propose in section \ref{sec:proposed} a novel splitting approach involving an epigraphical projection. In addition, closed form expressions for specific epigraphical projections are given. Experiments in two different contexts are presented in Section~\ref{sec:exp}. The first ones concern an image reconstruction problem, while the second ones
are related to pulse shape design for digital communications. Finally, some conclusions are drawn in Section \ref{sec:con}.\\

\noindent\textbf{Notation}: Let $\HH$ be a real Hilbert space endowed with the norm $\|\cdot\|$ and the scalar product $\scal{\cdot}{\cdot}$. $\Gamma_0(\HH)$ denotes the set of proper lower-semicontinuous convex functions from $\HH$ to $\RX$.
The epigraph of $\varphi\in \Gamma_0(\HH)$ is the nonempty closed convex subset of $\HH\times \RR$ defined as $\epi \varphi = \menge{(y,\zeta) \in \HH\times \RR}{\varphi(y)\le \zeta}$ and the lower level set of $\varphi$ at height $\zeta \in \RR$ is the nonempty closed convex subset of $\HH$ defined as $\lev{\zeta}\varphi = \menge{y\in \HH}{\varphi(y) \le \zeta}$. 
A subgradient of $\varphi$ at $y\in \HH$ is an element of its subdifferential defined as $\partial \varphi(y)
= \menge{t\in\HH}{(\forall u \in \HH)\;\;\varphi(u) \ge \varphi(y)+\scal{t}{u-y}}$. When  $\varphi$ is G\^ateaux-differentiable at $y$, $\partial \varphi(y) = \{\nabla\varphi(y)\}$ where $\nabla\varphi(y)$ is the gradient of $\varphi$ at $y$. Let $C$ be a nonempty closed convex subset $C$ of $\HH$. The relative interior of $C$ is denoted by $\reli C$. For every $y \in \HH$, the indicator function $\iota_C\in \Gamma_0(\HH)$ of $C$ is given by
\begin{equation}
\iota_C(y)
= \begin{cases}
0, & \mbox{if $y \in C$},\\
\pinf, & \mbox{otherwise,}
\end{cases}
\end{equation}
the projection onto $C$ reads $P_C(y) = \operatorname{argmin}_{u\in C} \|u-y\|$, and the distance to $C$ is given by $d_C(y) = \|y-P_C(y)\|$.

\section{Proximal tools}\label{sec:rec}
\subsection{From gradient descent to proximal algorithms}
\label{ssec:algprox}
The first methods for finding a solution to an inverse problem were restricted to the use of a differentiable cost function \cite{Tikhonov_A_1963_j-sov-mat-dok_tikhonov_ripp}, i.e. Problem~\ref{p:gen} where $S=0$ and, for every $r\in\{1,\ldots,R\}$, $g_r$ denotes a differentiable function. In this context, gradient-based algorithms appear to be the most efficients solutions when an iterative procedure is required (see \cite{Chouzenoux_E_2011_j-ieee-tip_majorize_mss} and references therein). However, in order to model additional properties, sparsity promoting penalizations ($R\ge 1$) or hard constraints ($S \ge 1$) may be introduced and the diffentiability property is not satisfied anymore. One way to circumvent this difficulty is to resort to smart approximations in order to smooth the involved non-differentiable functions \cite{Martinet_B_1970_j-iro_reg_iva, Aubert_G_1980_j-boll_minimisation_fnc, BenTal_A_1989_j-lnm_smoothing_tno, Hiriart_Urruty_1996_book_convex_amaIf}. If one wants to address the original nonsmooth problem without approximation errors, one may apply some specific algorithms \cite{Tseng_P_2001_j-ota_conv_bcd}, the convergence of which is guaranteed under restrictive assumptions. Interior point methods \cite{Wright_S_1997_book_primal_dip} can also be employed for small to medium size optimization problems.

On the other hand, in order to solve convex feasibility problems, i.e. to find a vector belonging to the intersection of convex sets (Problem~\ref{p:gen} with $R=0$), iterative projection methods were developed. The projection onto convex sets algorithm (POCS) is one of the most popular approach to solve data recovery problems \cite{Bregman_LM_1965_sm_POCS_tmospfacpocs, Gurin_LG_1967_zvmmf_Projection_mffacpocs, Youla_DC_1982_tmi_POCS_irbtmopocs, Combettes_P_1993_pieee_fou_ste}. A drawback of POCS is that it is not well-suited for parallel implementations. The Parallel Projection Method (PPM) and Method of Parallel Projections (MOPP) are variants of POCS making use of parallel projections. Moreover, these algorithms were designed to efficiently solve inconsistent feasibility problems (when the intersection of the convex set is empty). Thorough comparisons between projection methods have been performed in \cite{Combettes_P_1997_j-ieee-tip_con_sti, Censor_Y_2012_j-comp-opt-appl_effectiveness_pmc}.

Computing the projection $P_C$ onto a nonempty closed convex subset $C$ of a real Hilbert space $\HH$ requires to solve a constrained quadratic minimization problem. 
However, it turns out that a closed form expression of the solution to this problem is available in a limited number of instances. Some well-known examples are the projections onto hyperplanes, closed half-spaces and $\bell_2$-norm balls \cite{Hiriart_Urruty_1996_book_convex_amaIf, Bauschke_H_2011_book_con_amo}.
When an expression of the direct projection is not available, the convex set $C$ can be approximated by a half-space, which leads to the concept of subgradient projection. 
An efficient block iterative surrogate splitting method was proposed in \cite{Combettes_PL_2003_tsp_Block_abiscsmfqsr} in order to solve Problem~\ref{p:gen} when $S \ge 1$ and, for every $r\in\{1,\ldots,R\}$, $g_r = \|\cdot - z_r\|^2$ where $z_r \in \RR^{N_r}$. A main limitation of this method is that the global objective function must be strictly convex. For recent works about subgradient projection methods, the readers may refer to \cite{Yamada_I_2001_inbook_hybrid_sdm,Slavakis_K_2006_j-nfao_adaptive_psm, Bouboulis_P_2012_j-ieee-tnnls_adaptive_lcr}.

A way to overcome this difficulty consists of considering proximal approaches. The key tool in these methods is the proximity operator \cite{Moreau_J_1965_bsmf_Proximite_eddueh} of a function $\varphi \in \Gamma_0(\HH)$, defined as
\begin{equation}
(\forall y \in \HH)\qquad
\prox_\varphi(y)= \underset{u\in \HH}{\operatorname{argmin}} \frac12 \|u-y\|^2 +
\varphi(u).
\end{equation}
The proximity operator can be interpreted as a sort of subgradient step for the function $\varphi$, as $p = \prox_\varphi(y)$ is uniquely defined through the inclusion
\begin{equation}\label{e:subdiprox}
y - p \in \partial \varphi(p).
\end{equation}
Proximity operators enjoy many interesting properties \cite{Chaux_C_2007_j-ip_variational_ffbip}. In particular, they generalize the notion of projection onto a closed convex set $C$, in the sense that $\prox_{\iota_C} = P_C$. Hence, proximal methods provide a unifying framework that allows one to address non-smooth penalizations ($R \ge 1$) and hard constraints ($S \ge 1$).

The class of proximal methods includes primal algorithms \cite{Daubechies_I_2004_cpamath_iterative_talipsc, Chaux_C_2007_j-ip_variational_ffbip, Combettes_PL_2007_istsp_Douglas_rsatncvsr, Figueiredo_M_2007_j-ieee-sel-topics-sp_gra_psr, Beck_A_2009_j-siam-is_fast_istalip, Fornasier_M_2009_j-sjna_subspace_cmtvl1m, Steidl_G_2010_j-math-imaging-vis_removing_mndrsm, Combettes_P_2010_inbook_proximal_smsp, Pesquet_J_2012_j-pjpjoo_par_ipo} 
and primal-dual algorithms \cite{Chen_G_1994_j-mp_pro_bdm, Esser_E_2010_j-siam-is_gen_fcf, Chambolle_A_2010_first_opdacpai, Briceno_L_2011_j-siam-opt_mon_ssm, Combettes_P_2011_j-svva_pri_dsa, Vu_B_2011_j-acm_spl_adm, Condat_L_2012,Chen_2013_ip_primal_dual_fpa}. Primal algorithms generally require to inverse some linear operators (e.g.\ $\sum_{r=1}^R T_r^* T_r + \sum_{s=1}^S H_s^* H_s$), while primal-dual ones only require to compute $(T_r)_{1\le r \le R}$, $(H_s)_{1\le s \le S}$ and their adjoints. Consequently, primal-dual methods are often easier to implement than primal ones, but their convergence may be slower \cite{Pustelnik_N_2012_j-ieee-tsp_rel_tfc,Couprie_C_2012}. Note also that some of these methods are closely related to augmented Lagrangian approaches 
\cite{Setzer_S_2009_j-jvcir_deblurring_pibsbt, Figueiredo_M_2010_t-ip_restoration_piado}.

\subsection{Variable-splitting techniques}

With the development of proximal methods for solving convex optimization problems, many techniques have been developed to cope with the intrinsic limitations of these methods. In particular, the computation of the proximity operator becomes intractable 
in general when considering the sum of several functions or a function composed with a linear operator. In this context, variable-splitting constitutes a very effective way to design easily implementable algorithms. One of the most popular examples is given by the approaches inspired by the Alternating Direction Method of Multipliers (ADMM), which deal with optimization problems of the form
\begin{equation}
\label{p:split}
\minimize{x\in \HH} g_1(T_1 x) + g_2(x).
\end{equation}
By introducing an auxiliary variable $v\in \RR^{N_1}$, the problem is reformulated as
\begin{equation}
\minimize{(x,v) \in \HH \times \RR^{N_1}} g_1(v) + g_2(x) \quad\subto\quad T_1x = v.
\end{equation}
 This kind of splitting technique has been often used in image restoration \cite{Afonso_M_2009_j-tip_augmented_lacofiip, Setzer_S_2009_j-jvcir_deblurring_pibsbt} and more recently for distributed optimization problems \cite{Boyd2011_j-found-tml_distributed_osl_admm}. A similar form of splitting has been considered in \cite{Briceno_L_2011_j-math-imaging-vis_pro_ami, Peyre_G_2011_p-eusipco_gro_sop}, where the constraint $T_1 x = v$ is handled by computing the projection onto the nullspace of the linear operator  $[T_1 \;\;-\Id]$, which has a closed-form expression for some specific choices of $T_1$, such as circulant matrices involved in image restoration.

The solution that we will propose in this work also introduces auxiliary variables. However,
our objective is not to deal with linear transformations of the data but with a projection which does not have a closed-form expression. Consequently, the proposed solution departs from the usual splitting methods, in the sense that our approach leads to a collection of epigraphs and a half-space constraint sets, while the usual splitting techniques yield linear constraints.

\section{Proposed method}\label{sec:proposed}\label{sec:spl}
We now turn our attention to convex sets for which the associated projection does not have a closed form and we show 
that, under some appropriate assumptions, it is possible to circumvent this difficulty. In Problem \ref{p:gen}, assume that $C_1$ denotes such a constraint and that it can be modelled as: for every $y \in \RR^{M_1}$,
\begin{equation}\label{e:levconst}
y \in C_1 \quad\Leftrightarrow\quad
h_1(y) = \sum_{\ell = 1}^{L_1} h_1^{(\ell)}({\sf y}^{(\ell)}) \le \eta_1
\end{equation}
where $\eta_1 \in \RR$. Hereabove, the generic vector $y$ has been decomposed into blocks of coordinates as follows
\begin{equation}\label{e:decblocy}
y = [\underbrace{({\sf y}^{(1)})^\top}_{\text{size}\,M_1^{(1)}},\ldots,\underbrace{({\sf y}^{(L)})^\top}_{\text{size}\,M_1^{(L_1)}}]^\top
\end{equation}
and, for every $\ell \in \{1,\ldots,L_1\}$, ${\sf y}^{(\ell)} \in \RR^{M_1^{(\ell)}}$
and $h_1^{(\ell)}$ is a function in $\Gamma_0(\RR^{M_1^{(\ell)}})$ such that $\reli(\dom h_1^{(\ell)}) \neq \emp$. 

\subsection{Epigraphical splitting}\label{sec:epi_split}
The idea underlying our approach consists of introducing an auxiliary vector ${\zeta_1} = \big(\zeta_1^{(\ell)}\big)_{1 \le \ell \le L_1} \in \RR^{L_1}$, so that Constraint~\eqref{e:levconst} can be equivalently rewritten as$^*$ \footnote[0]{$^*$Note that the inequality in \eqref{e:const1} can be also replaced by an equality, even though it makes little difference in our approach.}
\begin{empheq}[left=\left\{,right=\right.]{align}
&\sum_{\ell=1}^{L_1} \zeta_1^{(\ell)} \le \eta_1,\label{e:const1}\\
& (\forall \ell \in \{1,\ldots,L_1\})\qquad h_1^{(\ell)}({\sf y}^{(\ell)}) \le \zeta_1^{(\ell)}\label{e:const2}.
\end{empheq}
Let us now introduce the closed half-space of $\RR^{L_1}$ defined as 
\begin{equation}\label{e:defV}
V_1 = \menge{\zeta\in \RR^{L_1}}{{\sf 1}_{L_1}^\top\zeta\le \eta_1},
\end{equation}
with ${\sf 1}_{L_1} = (1,\ldots,1)^\top \in \RR^{L_1}$, and the closed convex set
\begin{equation}\label{e:defE}
E_1 = \big\{ {(y,\zeta)\in \RR^{M_1}\times \RR^{L_1}}~\big|~{(\forall \ell \in \{1,\dots,L_1\})\;} 
({\sf y}^{(\ell)},\zeta^{(\ell)})\in \epi h_1^{(\ell)}
\big\}.
\end{equation}
Then, Constraint~\eqref{e:const1} means that $\zeta_1 \in V_1$, while Constraint~\eqref{e:const2} is equivalent to $(y,\zeta_1)\in E_1$. In other words, the constraint $C_1$ can be split into the two constraints $V_1$ and $E_1$, provided that an additional vector $\zeta_1 \in \RR^{L_1}$ is introduced in Problem~\ref{p:gen}. The resulting criterion takes the form:
\begin{problem}\label{p:epi}
\begin{equation}
\underset{(x,\zeta_1)\in \HH \times V_1}{\mathrm{minimize}}\; \sum_{r=1}^R g_r(T_rx)\quad\subto\quad 
\begin{cases}
(H_1x,\zeta_1)\in E_1\\
H_2 x\in C_2\\
\quad \vdots\\
H_Sx\in C_S.
\end{cases}
\end{equation}
\end{problem}
Note that the additional constraints can be easily handled by proximal algorithms as far as the projections onto the associated constraint sets can be computed. In the present case, the projection onto $V_1$ is well-known \cite{Hiriart_Urruty_1996_book_convex_amaIf}, whereas the projection onto $E_1$ is given by 
\begin{equation}
(\forall (y,\zeta)\in \RR^{M_1}\times \RR^{L_1})\qquad
P_{E_1}(y,\zeta) = (p,\theta) 
\end{equation}
where $\theta = (\theta^{(\ell)})_{1\le\ell\le L_1}$, vector $p\in \RR^{M_1}$ is blockwise decomposed as $p = [({\sf p}^{(1)})^\top,\ldots, ({\sf p}^{(L_1)})^\top]^\top$ like in \eqref{e:decblocy},  and
\begin{equation}\label{eq:P_epi}
(\forall \ell \in \{1,\ldots,L_1\})\quad
({\sf p}^{(\ell)},\theta^{(\ell)}) = P_{\epi h_1^{(\ell)}}({\sf y}^{(\ell)},\zeta^{(\ell)}).
\end{equation}
Hence, the problem reduces to the lower-dimensional problem of the determination of the projection onto the convex subset $\epi h_1^{(\ell)}$ of $\RR^{M_1^{(\ell)}} \times \RR$ for each $\ell \in \{1,\ldots,L_1\}$. 
An example of an algorithm that converges to a solution to Problem~\ref{p:epi} will be provided in Section~\ref{s:algo}.

\subsection{Proximity operators: new closed forms}
\label{ss:newprox}
The key point in the proposed splitting is the introduction of some epigraphs in the minimization process. In the following, we provide some results concerning the projection onto the epigraph of a convex function. 

\begin{proposition}\label{p:newprox}
Let $\HH$ be a real Hilbert space and let $\HH \times \RR$ be equipped with the standard product space norm. Let $\varphi$ be a function in $\Gamma_0(\HH)$ such that $\dom\varphi$ is open. The projector $P_{\epi \varphi}$ onto the epigraph of $\varphi$ is given by: 
\begin{equation}
\big(\forall (y,\zeta)\in \HH\times \RR\big) \qquad P_{\epi \varphi}(y,\zeta) = (p,\theta)
\end{equation}
 where
\begin{equation}\label{e:proxprojS}
\begin{cases}
p &= \prox_{\frac12 (\max\{\varphi-\zeta,0\})^2}(y),\\ 
\theta &= \max\{\varphi(p),\zeta\}.
\end{cases}
\end{equation}
\end{proposition}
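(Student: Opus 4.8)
The plan is to characterize the projection $(p,\theta) = P_{\epi\varphi}(y,\zeta)$ directly from its defining minimization problem and then verify that the closed forms in \eqref{e:proxprojS} satisfy that characterization. First I would write
\[
P_{\epi\varphi}(y,\zeta) = \underset{(u,\xi)\in\epi\varphi}{\operatorname{argmin}}\; \tfrac12\|u-y\|^2 + \tfrac12(\xi-\zeta)^2,
\]
and split the analysis according to whether the point $(y,\zeta)$ already lies in $\epi\varphi$ (i.e. $\varphi(y)\le\zeta$) or not. In the first case $(p,\theta)=(y,\zeta)$, and one checks this agrees with \eqref{e:proxprojS}: since $\varphi(y)-\zeta\le 0$, the function $\tfrac12(\max\{\varphi-\zeta,0\})^2$ vanishes on a neighbourhood of $y$ (here $\dom\varphi$ open is used so that $y$ is interior and the max is identically zero near $y$), hence its proximity operator fixes $y$, and $\max\{\varphi(y),\zeta\}=\zeta$.

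The substantive case is $\varphi(y) > \zeta$, where the constraint is active, so at the optimum $\varphi(p)=\theta$. Then the problem becomes $\min_{u}\tfrac12\|u-y\|^2 + \tfrac12(\varphi(u)-\zeta)^2$ restricted to $u$ with $\varphi(u)\ge\zeta$; but on the complementary region $\varphi(u)<\zeta$ the added term $(\max\{\varphi(u)-\zeta,0\})^2$ is zero while on $\varphi(u)\ge\zeta$ it equals $(\varphi(u)-\zeta)^2$, and one argues the unconstrained minimizer of $\tfrac12\|u-y\|^2+\tfrac12(\max\{\varphi(u)-\zeta,0\})^2$ cannot fall in the region $\varphi(u)<\zeta$ when $\varphi(y)>\zeta$ — intuitively, moving toward the sublevel set only decreases $\|u-y\|$ while the penalty stays zero, contradicting optimality unless we stop at the boundary. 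This identifies $p$ with $\prox_{\frac12(\max\{\varphi-\zeta,0\})^2}(y)$ and $\theta=\varphi(p)=\max\{\varphi(p),\zeta\}$ since $\varphi(p)\ge\zeta$ at an active constraint.

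A cleaner route, which I would prefer to make rigorous, is via the subdifferential. Writing $\psi = \tfrac12(\max\{\varphi-\zeta,0\})^2 = \tfrac12 (\varphi - \zeta)_+^2$, which is a convex function in $\Gamma_0(\HH)$ (composition of the nondecreasing convex function $t\mapsto\tfrac12 t_+^2$ with $\varphi-\zeta$), the inclusion \eqref{e:subdiprox} says $p=\prox_\psi(y)$ iff $y-p\in\partial\psi(p) = (\varphi(p)-\zeta)_+\,\partial\varphi(p)$ (using the chain rule for the composition, valid since $\dom\varphi$ open gives a qualification condition). I would then show this is equivalent to the first-order optimality system for the projection onto $\epi\varphi$: that system reads $(y-p,\zeta-\theta)\in N_{\epi\varphi}(p,\theta)$, and the normal cone to an epigraph at a boundary point $(p,\theta)$ with $\varphi(p)=\theta$ is $\{(\lambda t,-\lambda): \lambda\ge 0,\ t\in\partial\varphi(p)\}$ (plus the trivial case $y=p,\ \theta\ge\varphi(p)$ inside). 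Matching components gives $\zeta-\theta = -\lambda$ and $y-p = \lambda t$, i.e. $\lambda=\theta-\zeta=(\varphi(p)-\zeta)_+$ and $y-p\in\lambda\partial\varphi(p)$, exactly the proximity condition. The main obstacle is the careful handling of the normal cone / chain rule at boundary points of $\dom\varphi$ and the justification that the $\max$ can be dropped inside the argument of $\prox$; this is precisely where the hypothesis that $\dom\varphi$ is open does the work, ensuring the relevant subdifferential calculus rules apply without extra constraint qualifications and ruling out pathologies where the epigraph has a "vertical" face.
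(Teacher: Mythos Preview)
Your first, sketchy route is essentially the paper's argument: the paper also splits on $\varphi(y)\le\zeta$ versus $\varphi(y)>\zeta$, and in the second case invokes the KKT theorem to obtain a multiplier $\alpha\ge 0$, deduces $\varphi(p)=\theta=\zeta+\alpha\ge\zeta$, and then compares the restricted minima over $\{\varphi\ge\zeta\}$ and $\{\varphi\le\zeta\}$ by observing that $P_{\lev{\zeta}\varphi}(y)$ lies on the boundary $\{\varphi=\zeta\}$ (this is where openness of $\dom\varphi$ enters, via \cite[Corollary~8.38]{Bauschke_H_2011_book_con_amo}). So the paper makes rigorous exactly the step you describe as intuitive. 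One small correction to your easy case: from $\varphi(y)\le\zeta$ you cannot conclude that $\psi=\tfrac12(\max\{\varphi-\zeta,0\})^2$ vanishes on a \emph{neighbourhood} of $y$ (take $\varphi(y)=\zeta$); but $\prox_\psi(y)=y$ still follows directly, since the prox objective $\tfrac12\|u-y\|^2+\psi(u)$ is nonnegative and vanishes only at $u=y$.

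Your preferred normal-cone route is genuinely different from the paper's. Matching $y-p\in\partial\psi(p)=(\varphi(p)-\zeta)_+\,\partial\varphi(p)$ against $(y-p,\zeta-\theta)\in N_{\epi\varphi}(p,\theta)$ is a clean one-shot argument once the two calculus facts you flag are secured: the chain rule for $\partial\psi$ (valid because a function in $\Gamma_0(\HH)$ is continuous on the interior of its domain, hence on all of $\dom\varphi$ here), and the identification $N_{\epi\varphi}(p,\varphi(p))=\{(\lambda t,-\lambda):\lambda\ge 0,\ t\in\partial\varphi(p)\}$, where openness of $\dom\varphi$ is precisely what rules out nonzero ``horizontal'' normals $(v,0)$ coming from a domain boundary. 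Compared with the paper, your approach trades the explicit two-region comparison and Lagrange multiplier for standard subdifferential calculus; either works, but yours is arguably more direct and makes the role of the open-domain hypothesis more transparent.
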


\begin{proof}
For every $(y,\zeta)\in \HH\times \RR$, let $(p,\theta)=P_{\epi \varphi}(y,\zeta)$. 
If $\varphi(y) \le \zeta$, then $p = y$ and $\theta = \zeta =
\max\{\varphi(p),\zeta\}$. In addition,
\begin{align}
(\forall u \in \HH)\quad
0 &=\frac12 \|p-y\|^2 + \frac12 \big(\max\{\varphi(p)-\zeta,0\}\big)^2\nonumber\\
&\le \frac12 \|u-y\|^2 + \frac12 \big(\max\{\varphi(u)-\zeta,0\}\big)^2,
\end{align}
which shows that \eqref{e:proxprojS} holds. 
Let us now consider the case when $\varphi(y) > \zeta$. From the definition of the projection, we get
\begin{equation}\label{e:projnew}
(p,\theta) = \underset{(u,\xi)\in \epi \varphi}{\operatorname{argmin}}\; \|u-y\|^2+(\xi-\zeta)^2.
\end{equation}
From the Karush-Kuhn-Tucker theorem \cite[Theorem~5.2]{Ekeland_I_1999_book_Convex_aavp},$^*$ \footnote[0]{$^*$By considering $u_0
  \in \dom \varphi$ and $\xi_0 >\varphi(u_0)$, the required
  qualification condition is obviously satisfied.}
there exists $\alpha \in\RP$
such that
\begin{equation}\label{e:Lagrange}
(p,\theta) = \underset{(u,\xi)\in \HH\times \RR}{\operatorname{argmin}}\;
\frac12 \|u-y\|^2+ \frac12 (\xi-\zeta)^2+ \alpha(\varphi(u)-\xi)
\end{equation}
where the Lagrange multiplier $\alpha$ is such that $\alpha(\varphi(p)-\theta)=0$. 
Since the value $\alpha = 0$ is not allowable (since it would lead to $p=y$ and $\theta = \zeta$), it can be deduced from
the above equality that 
$\varphi(p) = \theta$.
In addition, differentiating the Lagrange functional in \eqref{e:Lagrange}
w.r.t. $\xi$ yields
\begin{equation}
\varphi(p) = \theta = \zeta + \alpha \geq \zeta.
\end{equation}
Hence, $(p,\theta)$ given by \eqref{e:projnew} is such that
\begin{align}
p &= \underset{\substack{u\in \HH\\\varphi(u)\ge \zeta}}{\operatorname{argmin}}
\|u-y\|^2+(\varphi(u)-\zeta)^2\label{e:texp1}\\
\theta &= \varphi(p) = \max\{\varphi(p),\zeta\}.
\end{align}
Furthermore, as $\varphi(y) > \zeta$, we have
\begin{equation}\label{e:texp0}
\underset{\substack{u\in \HH\\\varphi(u)\le \zeta}}{\inf} \|u-y\|^2 
= \|P_{\lev{\zeta}\varphi }(y)-y\|^2
= \underset{\substack{u\in \HH\\\varphi(u)= \zeta}}{\inf} \|u-y\|^2
\end{equation}
where 
we have used the fact that
$P_{\lev{\zeta}\varphi}(y)$ belongs to the
boundary of $\lev{\zeta}\varphi$ which is equal
to $\menge{u \in\HH}{\varphi(u)= \zeta}$ since $\varphi$ is
lower-semicontinuous and $\dom \varphi$ is open \cite[Corollary 8.38]{Bauschke_H_2011_book_con_amo}.
We have then
\begin{align}\label{e:texp2}
\underset{\substack{u\in \HH\\\varphi(u)\le \zeta}}{\inf} \|u-y\|^2 
&= \underset{\substack{u\in \HH\\\varphi(u)= \zeta}}{\inf} \|u-y\|^2\nonumber\\
&\ge \underset{\substack{u\in \HH\\\varphi(u)\ge \zeta}}{\inf}
\|u-y\|^2+(\varphi(u)-\zeta)^2.
\end{align}
Altogether, \eqref{e:texp1} and \eqref{e:texp2} lead to
\begin{equation}
p = \underset{u\in \HH}{\operatorname{argmin}}\;
\frac12\|u-y\|^2+\frac12\big(\max\{\varphi(u)-\zeta,0\}\big)^2
\end{equation}
which is equivalent to \eqref{e:proxprojS} since $\frac12\big(\max\{\varphi-\zeta,0\}\big)^2 \in \Gamma_0(\HH)$.
\end{proof}

Note that alternative characterizations of the epigraphical projection 
can be found in \cite[Propositions~9.17, 28.28]{Bauschke_H_2011_book_con_amo}.

From the previous proposition, we see that the proximity operator in \eqref{e:proxprojS} plays a prominent role in the calculation of the projection onto $\epi \varphi$. We now provide an example of function $\varphi$ for which this proximity operator admits a simple form.
\begin{proposition}\label{p:valbeta}
Let $\beta \in [1,\pinf[$, $\tau\in \RPPbis$. Assume that
\begin{equation}
(\forall y \in \RR) \qquad \varphi(y) = \tau |y|^{\beta}.
\end{equation}
If $\zeta \in ]-\infty,0]$, then for every $y \in \RR$
\begin{equation}
\prox_{\frac12 (\max\{\tau |\cdot|^{\beta}-\zeta,0\})^2}(y) = 
\begin{cases}
\displaystyle \frac{\operatorname{sign}(y)}{1+\tau^2}  \max\{|y|+\tau\zeta,0\}, & \mbox{if
  $\beta = 1$,}\\
\operatorname{sign}(y) \chi_0, & \mbox{if $\beta > 1$,}
\end{cases}
\end{equation}
where $\chi_0$ is the unique solution on $\RP$ of the equation
\begin{equation}\label{e:poly}
\beta \tau^2\chi^{2\beta-1} - \beta \tau \zeta \chi^{\beta-1}+\chi = |y|.
\end{equation}
If $\zeta \in \RPPbis$, then, for every $y \in \RR$,
\begin{equation}\label{e:proxbetap}
\prox_{\frac12 (\max\{\tau |\cdot|^{\beta}-\zeta,0\})^2}(y) = 
\begin{cases}
y, & \mbox{if $\tau|y|^{\beta}\le \zeta$,}\\
\operatorname{sign}(y) \chi_\zeta, & \mbox{otherwise,}
\end{cases}
\end{equation}
where 
$\chi_\zeta$ is the unique solution on {\small$[(\frac{\zeta}{\tau})^{1/\beta},\pinf[$} of \eqref{e:poly}.
\end{proposition}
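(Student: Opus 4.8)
The plan is to compute the proximity operator of $\psi = \frac12(\max\{\tau|\cdot|^\beta - \zeta,0\})^2$ directly from its defining optimization problem. Since $\psi$ is even, the minimizer $p = \prox_\psi(y)$ has the same sign as $y$ (or is zero), so without loss of generality I would reduce to the case $y \ge 0$ and look for $p \ge 0$ of the form $p = \chi$ with $\chi \ge 0$, then set the answer to $\operatorname{sign}(y)\chi$. This reduces everything to a one-dimensional scalar problem: minimize $\frac12(u-y)^2 + \frac12(\max\{\tau u^\beta - \zeta,0\})^2$ over $u\ge 0$.

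The next step is to split according to whether the inner $\max$ is active at the optimum. \textbf{Case $\zeta \le 0$.} Then $\tau u^\beta - \zeta \ge 0$ for all $u \ge 0$, so the objective is simply $\frac12(u-y)^2 + \frac12(\tau u^\beta - \zeta)^2$ throughout. For $\beta = 1$ this is a quadratic in $u$ on $[0,\infty)$ whose unconstrained minimizer is $(y + \tau\zeta)/(1+\tau^2)$; projecting onto $[0,\infty)$ gives the stated $\frac{1}{1+\tau^2}\max\{|y| + \tau\zeta, 0\}$ formula (after restoring the sign). For $\beta > 1$, $u \mapsto \tau u^\beta$ is differentiable on $[0,\infty)$, so Fermat's rule gives the stationarity equation $(u - y) + (\tau u^\beta - \zeta)\cdot \beta\tau u^{\beta-1} = 0$, which rearranges exactly to \eqref{e:poly} with $\chi = u$; I would then argue that the left-hand side of \eqref{e:poly}, call it $F(\chi)$, is continuous, strictly increasing on $\RP$ (each term $\beta\tau^2\chi^{2\beta-1}$, $-\beta\tau\zeta\chi^{\beta-1}$ with $-\zeta\ge 0$, and $\chi$ is nondecreasing, and the last is strictly increasing), with $F(0)=0 \le |y|$ and $F(\chi)\to\infty$, hence a unique root $\chi_0 \in \RP$, which by strict convexity of $\psi$ is the global minimizer.

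\textbf{Case $\zeta > 0$.} Here the inner set $\{u \ge 0 : \tau u^\beta \le \zeta\} = [0,(\zeta/\tau)^{1/\beta}]$ is where $\psi$ vanishes. If $\tau|y|^\beta \le \zeta$, i.e. $|y|$ already lies in that interval, then $y$ itself achieves the value $\frac12(y-y)^2 + 0 = 0$, so $\prox_\psi(y) = y$. Otherwise $|y| > (\zeta/\tau)^{1/\beta}$; I would show the minimizer $\chi_\zeta$ lies in $[(\zeta/\tau)^{1/\beta},\infty)$ — it cannot be interior to the flat region since on $[0,(\zeta/\tau)^{1/\beta}]$ the objective reduces to $\frac12(u-y)^2$, strictly decreasing in $u$ there because $u < |y|$, so the minimum over that subinterval is attained at its right endpoint, and on $[(\zeta/\tau)^{1/\beta},\infty)$ the objective is again $\frac12(u-y)^2 + \frac12(\tau u^\beta-\zeta)^2$ with the same stationarity equation \eqref{e:poly}. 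Strict monotonicity of $F$ on this half-line, together with $F((\zeta/\tau)^{1/\beta}) = \beta\tau^2(\zeta/\tau)^{(2\beta-1)/\beta} - \beta\tau\zeta(\zeta/\tau)^{(\beta-1)/\beta} + (\zeta/\tau)^{1/\beta} = (\zeta/\tau)^{1/\beta} \le |y|$, gives the unique root $\chi_\zeta$ on $[(\zeta/\tau)^{1/\beta},\infty)$.

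The only real subtlety — the step I expect to need the most care — is handling $\beta > 1$ cleanly: confirming that $u \mapsto \tau u^\beta$ extended by evenness, or restricted to $u \ge 0$, is differentiable at the relevant points (it is, on $[0,\infty)$, including at $0$ with derivative $0$ when $\beta>1$), that the sign-reduction to $u\ge 0$ is legitimate, and that in the $\zeta>0$ case the candidate minimizer does not get "trapped" at the kink $u = (\zeta/\tau)^{1/\beta}$ — which is ruled out precisely by the boundary evaluation $F((\zeta/\tau)^{1/\beta}) = (\zeta/\tau)^{1/\beta} < |y|$, forcing the stationary point strictly beyond the kink. Uniqueness and the "it is a global minimizer" claims all follow from strict convexity of $\psi$ (as a composition $\frac12(\max\{\cdot,0\})^2$ of the convex nondecreasing $\frac12(\max\{\cdot,0\})^2$ with the convex $\tau|\cdot|^\beta - \zeta$, plus the strictly convex quadratic term in the prox), so no delicate argument is needed there.
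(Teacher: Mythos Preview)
Your proof is correct and follows essentially the same skeleton as the paper's: reduce to $y\ge 0$ by evenness, split on the sign of $\zeta$, and in each case read off the stationarity condition \eqref{e:poly} from Fermat's rule. The only notable difference is tactical: where the paper cites external lemmas (oddness of the prox of an even function, the $\beta=1$ prox formula, and monotonicity/continuity of scalar prox operators to localize $p$ relative to $(\zeta/\tau)^{1/\beta}$), you argue each point directly from the optimization problem --- in particular, your handling of the $\zeta>0$ case via ``the objective is $\tfrac12(u-y)^2$ on the flat region, strictly decreasing there'' together with the boundary check $F\big((\zeta/\tau)^{1/\beta}\big)=(\zeta/\tau)^{1/\beta}<|y|$ is a clean self-contained substitute for the paper's appeal to prox monotonicity. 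One wording nit: $\psi$ itself is not strictly convex when $\zeta>0$ (it vanishes on an interval), so phrase the uniqueness claim as strict convexity of the full prox objective $\tfrac12(\cdot-y)^2+\psi$, which is what you effectively use.
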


\begin{proof}
Since $(\max\{\varphi-\zeta,0\})^2$ is an even function, $\prox_{\frac12 (\max\{\varphi-\zeta,0\})^2}$ is an odd
function \cite[Remark 4.1(ii)]{Chaux_C_2007_j-ip_variational_ffbip}. In the following, we thus focus
on the case when $y \in \RPPbis$. If $\zeta \in ]-\infty,0]$, then $(\max\{\varphi-\zeta,0\})^2=
(\varphi-\zeta)^2$. When $\beta = 1$, $\frac12(\max\{\varphi-\zeta,0\})^2=
(\tau^2/2)(\cdot)^2- \tau \zeta |\cdot|+\zeta^2/2$ and, from \cite[Example 4.6]{Chaux_C_2007_j-ip_variational_ffbip}, it can be deduced that
\begin{equation}
\prox_{\frac12 (\max\{\varphi-\zeta,0\})^2}(y) = 
\frac{1}{1+\tau^2} \max\{y+\tau\zeta,0\}.
\end{equation}
When $\beta > 1$, $(\varphi-\zeta)^2$ is differentiable and, according to \eqref{e:subdiprox}, 
$p = \prox_{\frac12 (\max\{\varphi-\zeta,0\})^2}(y)$ is uniquely defined as
\begin{equation}
p-y + \beta\tau p^{\beta-1} (\tau p^\beta-\zeta) = 0 \label{e:betaprox}
\end{equation}
where, according to \cite[Corollary 2.5]{Combettes_PL_2007_jopt_Proximal_tafmoob}, $p \ge 0$.
This allows us to deduce that $p = \chi_0$.

Let us now focus on the case when $\zeta \in ]0,\pinf[$. If $y\in
]0,(\zeta/\tau)^{1/\beta}[$, it can be deduced from~\cite[Corollary~2.5]{Combettes_PL_2007_jopt_Proximal_tafmoob}, that $p = \prox_{\frac12 (\max\{\varphi-\zeta,0\})^2}(y)
\in [0,(\zeta/\tau)^{1/\beta}[$. Since $(\forall v \in
[0,(\zeta/\tau)^{1/\beta}[)$ $\max\{\varphi(v)-\zeta,0\}=0$, \eqref{e:subdiprox}
yields $p=y$. On the other hand if $y > (\zeta/\tau)^{1/\beta}$, as the
proximity operator of a function from $\RR$ to $\RR$ is continuous and
increasing \cite[Proposition 2.4]{Combettes_PL_2007_jopt_Proximal_tafmoob}, 
$p = \prox_{\frac12 (\max\{\varphi-\zeta,0\})^2}(y) \ge \prox_{\frac12
  (\max\{\varphi-\zeta,0\})^2}\big((\zeta/\tau)^{1/\beta}\big) =(\zeta/\tau)^{1/\beta}$.
Since 
$(\max\{\varphi-\zeta,0\})^2$ is differentiable in this case, and
$(\forall v \ge (\zeta/\tau)^{1/\beta})$
$(\max\{\varphi(v)-\zeta,0\})^2=(\tau v^\beta-\zeta)^2$, \eqref{e:subdiprox} allows us
to deduce that $p$ is the unique value in $[(\zeta/\tau)^{1/\beta},\pinf[$ satisfying 
\eqref{e:betaprox}. It can be concluded that, when $\zeta \in
]0,\pinf[$, \eqref{e:proxbetap} holds. 
\end{proof}

Note that, when $\beta$ is a rational number, \eqref{e:poly} is equivalent to a polynomial equation for which either closed form solutions are known or standard numerical solutions exist.

\subsection{Examples of epigraphical projections}

The previous propositions allow us to establish the following results concerning the epigraphical projection in \eqref{eq:P_epi}:$^*$ \footnote[0]{$^*$We drop the subscript from $h_1$, $M_1$ and $L_1$ in order to relieve the notations.}
\begin{itemize}
\item Distance functions of the form
\begin{equation}
\big(\forall {\sf y}^{(\ell)} \in \RR^{M^{(\ell)}}\big)
\qquad h^{(\ell)}({\sf y}^{(\ell)}) = \tau^{(\ell)} d_{C^{(\ell)}}^{\beta^{(\ell)}}({\sf y}^{(\ell)})\label{e:foncdistl}
\end{equation}
where $\ell \in \{1,\ldots,L\}$, $\tau^{(\ell)} \in \RPPbis$, $\beta^{(\ell)} \in [1,\pinf]$, and $C^{(\ell)}$ is a nonempty closed convex subset of $\RR^{M^{(\ell)}}$. 
\begin{proposition}\label{ex:epidistl}
Assume that $h^{(\ell)}$ is given by \eqref{e:foncdistl}. Then, for every 
$({\sf y}^{(\ell)},\zeta^{(\ell)})\in \RR^{M^{(\ell)}}\times \RR$,
$P_{\epi h^{(\ell)}}({\sf y}^{(\ell)},\zeta^{(\ell)}) = ({\sf p}^{(\ell)},\theta^{(\ell)})$
where
\begin{equation}\label{e:prodqdS}
{\sf p}^{(\ell)}= 
\begin{cases}
{\sf y}^{(\ell)}, &\mbox{if ${\sf y}^{(\ell)} \in C^{(\ell)}$,}\\
\alpha^{(\ell)}{\sf y}^{(\ell)}+(1-\alpha^{(\ell)})P_{C^{(\ell)}}({\sf y}^{(\ell)}), & \mbox{otherwise,}
\end{cases}
\end{equation}
and $\theta^{(\ell)} = \max\{\tau^{(\ell)} d_{C^{(\ell)}}^{\beta^{(\ell)}}({\sf p}^{(\ell)}),\zeta^{(\ell)}\}$, 
where 
\begin{equation}
\alpha^{(\ell)} = \frac{\prox_{\frac12 (\max\{\tau^{(\ell)} |\cdot|^{\beta^{(\ell)}}-\zeta^{(\ell)},0\})^2}\big(d_{C^{(\ell)}}({\sf y}^{(\ell)})\big)}{d_{C^{(\ell)}}({\sf y}^{(\ell)})}
\end{equation}
and  the expression of $\prox_{\frac12 (\max\{\tau^{(\ell)} |\cdot|^{\beta^{(\ell)}}-\zeta^{(\ell)},0\})^2}$ is provided
by Proposition {\rm\ref{p:valbeta}}.
\end{proposition}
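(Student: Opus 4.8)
The plan is to combine Proposition~\ref{p:newprox} with Proposition~\ref{p:valbeta}, after reducing the epigraphical projection of $h^{(\ell)}$ to a one-dimensional problem along the ray joining ${\sf y}^{(\ell)}$ to its projection onto $C^{(\ell)}$.

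First I would observe that $d_{C^{(\ell)}}$ is real-valued, so $\dom h^{(\ell)}=\RR^{M^{(\ell)}}$ is open and Proposition~\ref{p:newprox} applies. Writing $\psi\colon t\mapsto \frac12\big(\max\{\tau^{(\ell)}|t|^{\beta^{(\ell)}}-\zeta^{(\ell)},0\}\big)^2$, one has $\frac12\big(\max\{h^{(\ell)}-\zeta^{(\ell)},0\}\big)^2=\psi\circ d_{C^{(\ell)}}$, whence $P_{\epi h^{(\ell)}}({\sf y}^{(\ell)},\zeta^{(\ell)})=({\sf p}^{(\ell)},\theta^{(\ell)})$ with ${\sf p}^{(\ell)}=\prox_{\psi\circ d_{C^{(\ell)}}}({\sf y}^{(\ell)})$ and $\theta^{(\ell)}=\max\{h^{(\ell)}({\sf p}^{(\ell)}),\zeta^{(\ell)}\}=\max\{\tau^{(\ell)}d_{C^{(\ell)}}^{\beta^{(\ell)}}({\sf p}^{(\ell)}),\zeta^{(\ell)}\}$, which already gives the announced $\theta^{(\ell)}$. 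It then remains to compute $\prox_{\psi\circ d_{C^{(\ell)}}}$. Here $\psi\in\Gamma_0(\RR)$ is even and nondecreasing on $\RP$ with $\psi(0)=\min\psi$; consequently $\prox_\psi$ is odd and nondecreasing with $\prox_\psi(0)=0$, so $0\le\prox_\psi(t)\le t$ for every $t\in\RP$.

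Next I would treat the two cases of \eqref{e:prodqdS}. If ${\sf y}^{(\ell)}\in C^{(\ell)}$, then $d_{C^{(\ell)}}({\sf y}^{(\ell)})=0$ and, since $\psi(d_{C^{(\ell)}}({\sf u}))\ge\psi(0)$ for all ${\sf u}$, the map ${\sf u}\mapsto\frac12\|{\sf u}-{\sf y}^{(\ell)}\|^2+\psi(d_{C^{(\ell)}}({\sf u}))$ attains its minimum only at ${\sf u}={\sf y}^{(\ell)}$, so ${\sf p}^{(\ell)}={\sf y}^{(\ell)}$. If ${\sf y}^{(\ell)}\notin C^{(\ell)}$, I would use that $d_{C^{(\ell)}}$ is $1$-Lipschitz and that $C^{(\ell)}$ is convex: for every $s\in[0,d_{C^{(\ell)}}({\sf y}^{(\ell)})]$, the point ${\sf z}_s$ of the segment $[{\sf y}^{(\ell)},P_{C^{(\ell)}}({\sf y}^{(\ell)})]$ at distance $d_{C^{(\ell)}}({\sf y}^{(\ell)})-s$ from ${\sf y}^{(\ell)}$ satisfies $P_{C^{(\ell)}}({\sf z}_s)=P_{C^{(\ell)}}({\sf y}^{(\ell)})$, hence $d_{C^{(\ell)}}({\sf z}_s)=s$, and it minimizes $\|{\sf u}-{\sf y}^{(\ell)}\|$ among all ${\sf u}$ with $d_{C^{(\ell)}}({\sf u})=s$. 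Since $\psi$ is nondecreasing and $\prox_\psi(d_{C^{(\ell)}}({\sf y}^{(\ell)}))\le d_{C^{(\ell)}}({\sf y}^{(\ell)})$, minimizing ${\sf u}\mapsto\frac12\|{\sf u}-{\sf y}^{(\ell)}\|^2+\psi(d_{C^{(\ell)}}({\sf u}))$ reduces to the scalar problem $\min_{s\in\RP}\frac12\big(s-d_{C^{(\ell)}}({\sf y}^{(\ell)})\big)^2+\psi(s)$, whose unique solution is $s^\star=\prox_\psi\big(d_{C^{(\ell)}}({\sf y}^{(\ell)})\big)\in[0,d_{C^{(\ell)}}({\sf y}^{(\ell)})]$. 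Hence ${\sf p}^{(\ell)}={\sf z}_{s^\star}=\alpha^{(\ell)}{\sf y}^{(\ell)}+(1-\alpha^{(\ell)})P_{C^{(\ell)}}({\sf y}^{(\ell)})$ with $\alpha^{(\ell)}=s^\star/d_{C^{(\ell)}}({\sf y}^{(\ell)})\in[0,1]$, which is exactly the claimed expression; plugging in the closed form of $\prox_\psi$ from Proposition~\ref{p:valbeta} gives $\alpha^{(\ell)}$ explicitly.

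I expect the main obstacle to be the geometric reduction in the second case: justifying that among all ${\sf u}$ at a prescribed distance $s\le d_{C^{(\ell)}}({\sf y}^{(\ell)})$ from $C^{(\ell)}$ the segment point ${\sf z}_s$ is the one closest to ${\sf y}^{(\ell)}$ (this rests on the convexity of $C^{(\ell)}$ and the identity $P_{C^{(\ell)}}({\sf z}_s)=P_{C^{(\ell)}}({\sf y}^{(\ell)})$), together with checking that $s^\star$ stays in $[0,d_{C^{(\ell)}}({\sf y}^{(\ell)})]$ so that ${\sf p}^{(\ell)}$ is genuinely a convex combination of ${\sf y}^{(\ell)}$ and $P_{C^{(\ell)}}({\sf y}^{(\ell)})$; the latter uses only the monotonicity of $\prox_\psi$ and $\prox_\psi(0)=0$. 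Everything else is bookkeeping, and the argument does not depend on the particular value of $\beta^{(\ell)}$ once Proposition~\ref{p:valbeta} supplies the scalar proximity operator.
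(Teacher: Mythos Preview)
Your proposal is correct and follows the same overall scheme as the paper: apply Proposition~\ref{p:newprox} to reduce the epigraphical projection to the computation of $\prox_{\psi\circ d_{C^{(\ell)}}}$ with $\psi=\frac12(\max\{\tau^{(\ell)}|\cdot|^{\beta^{(\ell)}}-\zeta^{(\ell)},0\})^2$, and then invoke Proposition~\ref{p:valbeta} for the scalar prox. The one genuine difference lies in how $\prox_{\psi\circ d_{C^{(\ell)}}}$ is obtained. The paper simply cites \cite[Proposition~2.7]{Combettes_PL_2008_j-ip_proximal_apdmfscvip}, which gives directly the three-branch formula \eqref{e:proxdSgen}, and then checks via the computation of $\partial\psi^{(\ell)}(0)$ that the middle branch is absorbed by the convex-combination branch (because $\prox_{\psi^{(\ell)}}$ vanishes there). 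You instead re-derive that formula from scratch by a geometric reduction along the segment $[{\sf y}^{(\ell)},P_{C^{(\ell)}}({\sf y}^{(\ell)})]$, using the $1$-Lipschitz property of $d_{C^{(\ell)}}$ to get $\|{\sf u}-{\sf y}^{(\ell)}\|\ge |d_{C^{(\ell)}}({\sf u})-d_{C^{(\ell)}}({\sf y}^{(\ell)})|$ and the monotonicity of $\prox_\psi$ to confine $s^\star$ to $[0,d_{C^{(\ell)}}({\sf y}^{(\ell)})]$. Your route is more self-contained (no external reference needed) and makes the geometry transparent; the paper's route is shorter and also handles the nondifferentiable case $\beta^{(\ell)}=1$, $\zeta^{(\ell)}<0$ uniformly through $\partial\psi^{(\ell)}(0)$, whereas in your argument that case is silently covered by $s^\star=0$ giving $\alpha^{(\ell)}=0$. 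A minor remark: the identity $d_{C^{(\ell)}}({\sf z}_s)=s$ already follows from the $1$-Lipschitz bound together with $P_{C^{(\ell)}}({\sf y}^{(\ell)})\in C^{(\ell)}$, so you do not strictly need $P_{C^{(\ell)}}({\sf z}_s)=P_{C^{(\ell)}}({\sf y}^{(\ell)})$ for that step.
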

\begin{proof}

Let us notice that
$\frac12 (\max\{\tau^{(\ell)} d_{C^{(\ell)}}^{\beta^{(\ell)}}-\zeta^{(\ell)},0\})^2 = \psi^{(\ell)} \circ d_{C^{(\ell)}}$ where
$\psi^{(\ell)} = \frac12 (\max\{\tau^{(\ell)}|\cdot|^{\beta^{(\ell)}}-\zeta^{(\ell)},0\})^2$. According to
\cite[Proposition~2.7]{Combettes_PL_2008_j-ip_proximal_apdmfscvip},  for every ${\sf y}^{(\ell)}\in\RR^{M^{(\ell)}}$,
\begin{equation}\label{e:proxdSgen}
\prox_{\psi^{(\ell)} \circ d_{C^{(\ell)}}}({\sf y}^{(\ell)}) = \\
\begin{cases}
{\sf y^{(\ell)}}, & \mbox{if ${\sf y}^{(\ell)}\in C^{(\ell)}$},\\
P_{C^{(\ell)}}({\sf y}^{(\ell)}), & \mbox{if 
$d_{C^{(\ell)}}({\sf y}^{(\ell)}) \le \max \partial \psi^{(\ell)}(0)$},\\
\displaystyle \alpha^{(\ell)} {\sf y}^{(\ell)} + & \\
\quad (1-\alpha^{(\ell)}) P_{C^{(\ell)}}({\sf y}^{(\ell)}),& \mbox{if $d_{C^{(\ell)}}({\sf y}^{(\ell)}) > \max \partial \psi^{(\ell)}(0)$}
\end{cases}
\end{equation}
where 
$\alpha^{(\ell)} =  \frac{\prox_{\psi^{(\ell)}}\big(d_{C^{(\ell)}}({\sf y}^{(\ell)})\big)}{d_{C^{(\ell)}}({\sf y}^{(\ell)})}$.
In addition, we have
\begin{equation}
\partial \psi^{(\ell)}(0) =
\begin{cases}
[\tau^{(\ell)}\zeta^{(\ell)},-\tau^{(\ell)}\zeta^{(\ell)}], & \mbox{if $\zeta^{(\ell)} < 0$ and $\beta^{(\ell)} = 1$},\\
\{0\}, & \mbox{otherwise},
\end{cases}
\end{equation}
and, according to Proposition \ref{p:valbeta}, when $\zeta^{(\ell)}
< 0$, $\beta^{(\ell)}=1$ and $d_{C^{(\ell)}}({\sf y}^{(\ell)}) \le -\tau^{(\ell)}\zeta^{(\ell)}$, $\prox_{\psi^{(\ell)}}\big(d_{C^{(\ell)}}({\sf y}^{(\ell)})\big) =
0$. These show that \eqref{e:proxdSgen} reduces to \eqref{e:prodqdS}. 
\end{proof}
\item[]
\item Euclidean norms, as particular case of distance functions in \eqref{e:foncdistl} when $\beta^{(\ell)} \equiv 1$ and $C^{(\ell)} = \{0\}$:
\begin{equation}
\big(\forall {\sf y}^{(\ell)} \in \RR^{M^{(\ell)}}\big)
\qquad h^{(\ell)}({\sf y}^{(\ell)}) = \tau^{(\ell)} \|{\sf y}^{(\ell)}\|\label{e:foncnorml}
\end{equation}
where $\ell \in \{1,\ldots,L\}$ and $\tau^{(\ell)} \in \RPPbis$. The resulting epigraphical projection is given below.

\begin{corollary}\label{ex:norm_l2}
Assume that $h^{(\ell)}$ is given by \eqref{e:foncnorml}.\\
Then, for every 
$({\sf y}^{(\ell)},\zeta^{(\ell)})\in \RR^{M^{(\ell)}}\times \RR$,
\begin{equation}
P_{\epi h^{(\ell)}}({\sf y}^{(\ell)},\zeta^{(\ell)})= 
\begin{cases}
(0,0), & \mbox{if $\|{\sf y}^{(\ell)}\| < -\tau^{(\ell)}\zeta^{(\ell)}$},\\
\displaystyle ({\sf y}^{(\ell)},\zeta^{(\ell)}), & 
\mbox{if $\|{\sf y}^{(\ell)}\| < \frac{\zeta^{(\ell)}}{\tau^{(\ell)}}$},\\
\displaystyle
\alpha^{(\ell)} \, \big({\sf y}^{(\ell)},\tau^{(\ell)} \|{\sf y}^{(\ell)}\|\big), 
& \mbox{otherwise,}
\end{cases}
\end{equation}
where $\displaystyle \alpha^{(\ell)} = \frac{1}{1+(\tau^{(\ell)})^2}\Big(1+\frac{\tau^{(\ell)}\zeta^{(\ell)}}{\|{\sf y}^{(\ell)}\|}\Big)$.
\end{corollary}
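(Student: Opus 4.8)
The plan is to obtain Corollary~\ref{ex:norm_l2} as a direct specialization of Proposition~\ref{ex:epidistl} to the case $\beta^{(\ell)} = 1$ and $C^{(\ell)} = \{0\}$, so that $d_{C^{(\ell)}}({\sf y}^{(\ell)}) = \|{\sf y}^{(\ell)}\|$ and $P_{C^{(\ell)}}({\sf y}^{(\ell)}) = 0$, whence $h^{(\ell)}({\sf y}^{(\ell)}) = \tau^{(\ell)}\|{\sf y}^{(\ell)}\|$ as claimed. First I would note that with $P_{C^{(\ell)}}({\sf y}^{(\ell)}) = 0$ the first two branches of \eqref{e:prodqdS} become ${\sf p}^{(\ell)} = {\sf y}^{(\ell)}$ when ${\sf y}^{(\ell)} = 0$ and ${\sf p}^{(\ell)} = \alpha^{(\ell)}{\sf y}^{(\ell)}$ otherwise, and correspondingly $\theta^{(\ell)} = \max\{\tau^{(\ell)}\|{\sf p}^{(\ell)}\|,\zeta^{(\ell)}\}$; the bulk of the work is then to identify, from Proposition~\ref{p:valbeta} with $\beta = 1$, the precise regimes in which $\prox_{\frac12(\max\{\tau^{(\ell)}|\cdot|-\zeta^{(\ell)},0\})^2}(\|{\sf y}^{(\ell)}\|)$ vanishes, equals $\|{\sf y}^{(\ell)}\|$, or takes the affine form $\tfrac{1}{1+(\tau^{(\ell)})^2}(\|{\sf y}^{(\ell)}\| + \tau^{(\ell)}\zeta^{(\ell)})$.

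Next I would split on the sign of $\zeta^{(\ell)}$, exactly as in Proposition~\ref{p:valbeta}. When $\zeta^{(\ell)} \le 0$, the $\beta = 1$ formula gives $\prox_{\frac12(\max\{\tau^{(\ell)}|\cdot|-\zeta^{(\ell)},0\})^2}(\|{\sf y}^{(\ell)}\|) = \tfrac{1}{1+(\tau^{(\ell)})^2}\max\{\|{\sf y}^{(\ell)}\| + \tau^{(\ell)}\zeta^{(\ell)},0\}$: if $\|{\sf y}^{(\ell)}\| < -\tau^{(\ell)}\zeta^{(\ell)}$ this is $0$, so ${\sf p}^{(\ell)} = 0$ and then $\theta^{(\ell)} = \max\{0,\zeta^{(\ell)}\} = 0$, yielding the first branch $(0,0)$; otherwise $\alpha^{(\ell)} = \tfrac{1}{1+(\tau^{(\ell)})^2}(1 + \tau^{(\ell)}\zeta^{(\ell)}/\|{\sf y}^{(\ell)}\|)$ and one checks that $\theta^{(\ell)} = \tau^{(\ell)}\|{\sf p}^{(\ell)}\| = \alpha^{(\ell)}\tau^{(\ell)}\|{\sf y}^{(\ell)}\| \ge \zeta^{(\ell)}$ (since $\zeta^{(\ell)} \le 0 \le \theta^{(\ell)}$), giving the third branch $\alpha^{(\ell)}({\sf y}^{(\ell)}, \tau^{(\ell)}\|{\sf y}^{(\ell)}\|)$; note the middle branch $\|{\sf y}^{(\ell)}\| < \zeta^{(\ell)}/\tau^{(\ell)}$ is vacuous here. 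When $\zeta^{(\ell)} > 0$, Proposition~\ref{p:valbeta} gives ${\sf p}^{(\ell)} = {\sf y}^{(\ell)}$ (hence $\theta^{(\ell)} = \max\{\tau^{(\ell)}\|{\sf y}^{(\ell)}\|,\zeta^{(\ell)}\} = \zeta^{(\ell)}$) exactly when $\tau^{(\ell)}\|{\sf y}^{(\ell)}\| \le \zeta^{(\ell)}$, i.e.\ $\|{\sf y}^{(\ell)}\| \le \zeta^{(\ell)}/\tau^{(\ell)}$, which is the second branch; in the remaining case $\prox(\|{\sf y}^{(\ell)}\|)$ is the unique root in $[\zeta^{(\ell)}/\tau^{(\ell)},+\infty[$ of the (now linear in $\chi$) equation $\tau^{(\ell)2}\chi - \tau^{(\ell)}\zeta^{(\ell)} + \chi = \|{\sf y}^{(\ell)}\|$, i.e.\ $\chi = \tfrac{1}{1+(\tau^{(\ell)})^2}(\|{\sf y}^{(\ell)}\| + \tau^{(\ell)}\zeta^{(\ell)})$, recovering the same $\alpha^{(\ell)}$ and the third branch; finally I would verify that the two ``$<$'' cases in the corollary are mutually exclusive (one needs $\zeta^{(\ell)} \le 0$ for the first, $\zeta^{(\ell)} > 0$ for the second) so the statement is unambiguous.

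The one point requiring a little care — and the main obstacle, such as it is — is reconciling the boundary thresholds between Proposition~\ref{ex:epidistl} (which branches on $d_{C^{(\ell)}}({\sf y}^{(\ell)})$ versus $\max\partial\psi^{(\ell)}(0)$, already simplified in its proof) and the form stated in the corollary: one must check that, with $\beta^{(\ell)} = 1$, $\max\partial\psi^{(\ell)}(0) = -\tau^{(\ell)}\zeta^{(\ell)}$ when $\zeta^{(\ell)} < 0$ and $= 0$ when $\zeta^{(\ell)} \ge 0$, and that on each threshold the three-branch formula is continuous (e.g.\ at $\|{\sf y}^{(\ell)}\| = -\tau^{(\ell)}\zeta^{(\ell)}$ one has $\alpha^{(\ell)} = 0$, so $\alpha^{(\ell)}({\sf y}^{(\ell)},\tau^{(\ell)}\|{\sf y}^{(\ell)}\|) = (0,0)$, matching; and at $\|{\sf y}^{(\ell)}\| = \zeta^{(\ell)}/\tau^{(\ell)}$ one has $\alpha^{(\ell)} = 1$, so the third branch returns $({\sf y}^{(\ell)},\tau^{(\ell)}\|{\sf y}^{(\ell)}\|) = ({\sf y}^{(\ell)},\zeta^{(\ell)})$, matching the second). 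With these identifications the corollary follows immediately; everything else is bookkeeping already carried out in Propositions~\ref{p:valbeta} and~\ref{ex:epidistl}.
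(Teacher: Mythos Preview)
Your proposal is correct and follows exactly the route the paper intends: the corollary is stated without proof in the paper, as the specialization of Proposition~\ref{ex:epidistl} to $\beta^{(\ell)}=1$ and $C^{(\ell)}=\{0\}$, with the prox computed via Proposition~\ref{p:valbeta}. Your case analysis on the sign of $\zeta^{(\ell)}$ and your boundary checks fill in precisely the bookkeeping the paper omits.
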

The epigraph of the Euclidean norm is the so-called Lorentz convex symmetric cone \cite{Alizadeh2001_j_math_second_order_cone,Faraut1994_book_Symmetric_cones} and
the above result is actually known in the literature \cite{Pang2002_j_math_semismooth_homeomorphisms}.
As it will be shown in Section \ref{sec:exp}, this expression of the epigraphical projection is useful to deal with
multivariate sparsity constraints \cite{Wu_J_2011_j-ieee-tip_mul_csi} or total variation bounds \cite{Rudin_L_1992_tv_atvmaopiip,Aujol_JF_2009_jmiv_firstorder_atvbir}, since such constraints
typically involve a sum of functions like \eqref{e:foncnorml} composed with linear
operators corresponding to analysis transforms or gradient operators.
\item[] 
\item Infinity norms defined as: for every $\ell \in \{1,\ldots,L\}$ and ${\sf y}^{(\ell)} = ({\sf y}^{(\ell,m)})_{1 \le m \le M^{(\ell)}}\in \RR^{M^{(\ell)}}$,
\begin{equation}\label{hellinfty}
h^{(\ell)}({\sf y}^{(\ell)}) 
= \max\left\{\frac{{|\sf y}^{(\ell,m)}|}{\tau^{(\ell,m)}}\mid 1 \le m \le M^{(\ell)}\right\}.
\end{equation}
where  $(\tau^{(\ell,m)})_{1 \le m \le M^{(\ell)}} \in \RPPbis^{M^{(\ell)}}$. 
\begin{proposition}\label{ex:norm_linf}
Assume that $h^{(\ell)}$ is given by \eqref{hellinfty}
where the values $(\nu^{(\ell,m)} = |{\sf y}^{(\ell,m)}|/\tau^{(\ell,m)})_{1\le m \le M^{(\ell)}}$ 
are
in ascending order,
and set $\nu^{(\ell,0)} = -\infty$ and 
$\nu^{(\ell,M^{(\ell)}+1)} = \pinf$.
Then, for every $\zeta^{(\ell)} \in \RR$, $({\sf p}^{(\ell)},\theta^{(\ell)}) =
P_{\epi h^{(\ell)}}({\sf y}^{(\ell)},\zeta^{(\ell)})$ is such that
${\sf p}^{(\ell)} = ({\sf p}^{(\ell,m)})_{1 \le m \le M^{(\ell)}}$,
with 
\begin{equation}\label{e:projscalinffuncbis}
{\sf p}^{(\ell,m)} = \begin{cases}
{\sf y}^{(\ell,m)}, & \mbox{if\; $|{\sf y}^{(\ell,m)}|\le \tau^{(\ell,m)}\theta^{(\ell)}$},\\
\tau^{(\ell,m)}\theta^{(\ell)}, & \mbox{if\; ${\sf y}^{(\ell,m)} > \tau^{(\ell,m)}\theta^{(\ell)}$},\\
-\tau^{(\ell,m)}\theta^{(\ell)}, & \mbox{if\; ${\sf y}^{(\ell,m)} < -\tau^{(\ell,m)}\theta^{(\ell)}$},
\end{cases}
\end{equation}
\begin{equation}\label{e:projscalinffuncter}
\theta^{(\ell)} = 
\frac{\max\Big(\zeta^{(\ell)}+\sum_{m=\overline{m}^{(\ell)}}^{M^{(\ell)}}\nu^{(\ell,m)}(\tau^{(\ell,m)})^2,0\Big)}{1+\sum_{m=\overline{m}^{(\ell)}}^{M^{(\ell)}}(\tau^{(\ell,m)})^2},
\end{equation}
and $\overline{m}^{(\ell)}$ is the unique integer in {\small $\{1,\ldots,M^{(\ell)}+1\}$} such that

\begin{equation}\label{e:projscalmaxfuncterbis}
\nu^{(\ell,\overline{m}^{(\ell)}-1)}
 < \frac{\zeta^{(\ell)}+\sum_{m=\overline{m}^{(\ell)}}^{M^{(\ell)}} \nu^{(\ell,m)}(\tau^{(\ell,m)})^2}{1+\sum_{m=\overline{m}^{(\ell)}}^{M^{(\ell)}}(\tau^{(\ell,m)})^2} 
  \le \nu^{(\ell,\overline{m}^{(\ell)})}.
\end{equation}
\end{proposition}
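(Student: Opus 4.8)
\emph{Proof proposal.} The plan is to eliminate the primal block ${\sf p}^{(\ell)}$ first and reduce the epigraphical projection to a scalar optimization over the height variable. Fix $\ell$ and, as announced in the text, drop the superscript $(\ell)$; write $\tau=(\tau_m)_{1\le m\le M}$, ${\sf y}=(y_m)_{1\le m\le M}$ and $\nu_m=|y_m|/\tau_m$. Since $h$ is finite and continuous on $\RR^M$, the set $\epi h$ is nonempty closed convex, so $({\sf p},\theta)=P_{\epi h}({\sf y},\zeta)$ is the unique minimizer of $\|u-{\sf y}\|^2+(\xi-\zeta)^2$ over $\{(u,\xi)\mid h(u)\le\xi\}$; as $h(u)=\max_m |u_m|/\tau_m\ge 0$, this feasible set is $\{(u,\xi)\mid \xi\ge 0,\ (\forall m)\ |u_m|\le\tau_m\xi\}$.

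First I would minimize over $u$ for a fixed $\xi\ge 0$. The problem decouples across coordinates, and the optimal $u_m$ is the projection of $y_m$ onto $[-\tau_m\xi,\tau_m\xi]$, i.e.\ $u_m=\sign(y_m)\min\{|y_m|,\tau_m\xi\}$; taking $\xi=\theta$ yields \eqref{e:projscalinffuncbis}. The residual value of this inner problem is $\Phi(\xi):=(\xi-\zeta)^2+\sum_{m=1}^{M}\tau_m^2\big(\max\{\nu_m-\xi,0\}\big)^2$, so $\theta=\operatorname{argmin}_{\xi\ge 0}\Phi(\xi)$.

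Next I would study $\Phi$, which is convex, \emph{strictly} convex thanks to the term $(\xi-\zeta)^2$, continuously differentiable, and piecewise quadratic with breakpoints at the sorted values $\nu_1\le\cdots\le\nu_M$. With the convention $\nu_0=-\infty$, $\nu_{M+1}=\pinf$, on each interval $[\nu_{\overline m-1},\nu_{\overline m}]$ one has $\Phi'(\xi)=2\big(1+\sum_{m=\overline m}^{M}\tau_m^2\big)\big(\xi-\overline\xi_{\overline m}\big)$ with $\overline\xi_{\overline m}=\big(\zeta+\sum_{m=\overline m}^{M}\nu_m\tau_m^2\big)\big/\big(1+\sum_{m=\overline m}^{M}\tau_m^2\big)$. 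Since $\Phi'$ is continuous, strictly increasing, and tends to $\mp\infty$ as $\xi\to\mp\infty$, it has a unique zero $\theta^\star$, lying in exactly one interval $(\nu_{\overline m-1},\nu_{\overline m}]$; imposing $\Phi'=0$ there gives $\theta^\star=\overline\xi_{\overline m}$, and the membership $\nu_{\overline m-1}<\overline\xi_{\overline m}\le\nu_{\overline m}$ is precisely \eqref{e:projscalmaxfuncterbis}. Uniqueness of $\overline m$ follows from strict monotonicity of $\Phi'$, which cannot vanish on two distinct subintervals. Finally, minimizing $\Phi$ over $\RP$ rather than over $\RR$ replaces $\theta^\star$ by $\max\{\theta^\star,0\}$: if $\theta^\star\ge 0$ it is already feasible; if $\theta^\star<0$ then $\nu_0=-\infty$ is the only breakpoint below it, forcing $\overline m=1$, condition \eqref{e:projscalmaxfuncterbis} still holds since $\overline\xi_1=\theta^\star<0\le\nu_1$, and $\Phi$ is nondecreasing on $\RP$. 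In both cases $\theta=\max\{\overline\xi_{\overline m},0\}$, which is \eqref{e:projscalinffuncter}; reinserting this $\theta$ into the clamping formula of the first step closes the argument.

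\textbf{Main obstacle.} The heart of the proof is this waterfilling-type step: showing that \eqref{e:projscalmaxfuncterbis} singles out exactly one index $\overline m$ and that the corresponding $\overline\xi_{\overline m}$ is the (unconstrained) minimizer of $\Phi$. One must carefully handle the half-open interval convention, possible ties among the $\nu_m$ (which create degenerate subintervals), and the interplay between the breakpoint structure and the extra constraint $\xi\ge 0$. The remaining pieces — the coordinatewise clamping and the derivative computation — are routine.
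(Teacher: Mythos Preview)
Your proposal is correct and follows essentially the same route as the paper: eliminate ${\sf p}^{(\ell)}$ by coordinatewise clamping, reduce to the scalar minimization of the strictly convex piecewise-quadratic $\Phi$, locate the unique zero of $\Phi'$ on the correct subinterval to obtain \eqref{e:projscalmaxfuncterbis}, and then enforce the constraint $\xi\ge 0$ by taking the positive part. The only cosmetic difference is that the paper phrases the last two steps in proximal language, writing $\theta^{(\ell)}=\prox_{\phi^{(\ell)}+\iota_{\RP}}(\zeta^{(\ell)})=P_{\RP}\big(\prox_{\phi^{(\ell)}}(\zeta^{(\ell)})\big)$ via a cited identity, whereas you argue directly from monotonicity of $\Phi'$; the content is the same.
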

\begin{proof}
For every $({\sf y}^{(\ell)},\zeta^{(\ell)}) \in \RR^{M^{(\ell)}}\times \RR$, in order to determine $P_{\epi h^{(\ell)}}({\sf y}^{(\ell)},\zeta^{(\ell)}) $ we have to find
\begin{equation}
\min_{\theta^{(\ell)} \in \RP}\Big((\theta^{(\ell)}-\zeta^{(\ell)})^2  +
\min_{\substack{|{\sf p}^{(\ell,1)}|\le \tau^{(\ell,1)}\,\theta^{(\ell)}\\\dots\\
|{\sf p}^{(\ell,M^{(\ell)})}|\le \tau^{(\ell,M^{(\ell)})}\,\theta^{(\ell)}}}
\|{\sf p}^{(\ell)}-{\sf y}^{(\ell)}\|^2\Big).
\end{equation}
For every $\theta^{(\ell)}\in \RP$, the inner minimization is achieved when, for every $j\in \{1,\ldots,M^{(\ell)}\}$, ${\sf p}^{(\ell,m)}$ is the projection of ${\sf y}^{(\ell,m)}$ onto $[-\tau^{(\ell,m)}\,\theta^{(\ell)},\tau^{(\ell,m)}\,\theta^{(\ell)}]$, which is given by \eqref{e:projscalinffuncbis}. Then, the problem reduces to
\begin{equation}
\underset{\theta^{(\ell)} \in \RP}{\operatorname{minimize}}\;\Big((\theta^{(\ell)}-\zeta^{(\ell)})^2 + \sum_{m=1}^{M^{(\ell)}} (\max\{|{\sf y}^{(\ell,m)}|-\tau^{(\ell,m)}\theta^{(\ell)},0\})^2\Big)
\end{equation}
which is also equivalent to calculate $\theta^{(\ell)}=\prox_{\phi^{(\ell)}+\iota_{\RP}}(\zeta^{(\ell)})$, where $\phi^{(\ell)}$ is such that, for every $v \in \RR$,
\begin{equation}\label{e:varphiinf}
\phi^{(\ell)}(v) = \frac12\sum_{m=1}^{M^{(\ell)}} 
(\max\{\tau^{(\ell,m)}(\nu^{(\ell,m)}-v),0\})^2.
\end{equation}
By using \cite[Proposition 12]{Combettes_PL_2007_istsp_Douglas_rsatncvsr}, we have $\prox_{\phi^{(\ell)}+\iota_{\RP}} = P_{\RP}\circ \prox_{\phi^{(\ell)}}$. The function $\phi^{(\ell)}$ belongs to $\Gamma_0(\RR)$ since for every $m \in \{1,\ldots,M^{(\ell)}\}$, $v \mapsto \max\{\tau^{(\ell,m)}(\nu^{(\ell,m)} - v,0\}$ is finite convex and $(\cdot)^2$ is finite convex and increasing on $\RP$. In addition, $\phi^{(\ell)}$ is differentiable and it is such that, for every $v\in \RR$ and every $k \in \{1,\ldots,M^{(\ell)}+1\}$,
\begin{equation}
\nu^{(\ell,k-1)} < v \le \nu^{(\ell,k)} \quad \Rightarrow \quad
 \phi^{(\ell)}(v) =
\frac12 \sum_{m=k}^{M^{(\ell)}} (\tau^{(\ell,m)})^2 \Big(v-\nu^{(\ell,m)} \Big)^2.
\end{equation}
For every $\zeta^{(\ell)}\in \RR$, as $\chi^{(\ell)} = \prox_{\phi^{(\ell)}}(\zeta^{(\ell)})$ is characterized by \eqref{e:subdiprox}, there exists $\overline{m}^{(\ell)} \in \{1,\ldots,M^{(\ell)}+1\}$ such that $\nu^{(\ell,\overline{m}^{(\ell)}-1)} < \chi^{(\ell)} \le
\nu^{(\ell,\overline{m}^{(\ell)})}$ and 
\begin{equation}
\zeta^{(\ell)}-\chi^{(\ell)}  = \sum_{m=\overline{m}^{(\ell)}}^{M^{(\ell)}} (\tau^{(\ell,m)})^2 (\chi^{(\ell)} -\nu^{(\ell,m)}).
\end{equation}
This yields $\theta^{(\ell)} = P_{\RP}(\chi^{(\ell)} )$, hence \eqref{e:projscalinffuncter}, and we have: \eqref{e:projscalmaxfuncterbis} $\Leftrightarrow$ $\nu^{(\ell,\overline{m}^{(\ell)}-1)} < \chi^{(\ell)}  \le
\nu^{(\ell,\overline{m}^{(\ell)})}$. The uniqueness of $\overline{m}^{(\ell)} \in \{1,\ldots,M^{(\ell)}+1\}$ satisfying this inequality follows from the uniqueness of $\prox_{\phi^{(\ell)}}(\zeta^{(\ell)})$. 
\end{proof}

When $\tau^{(\ell,m)} \equiv 1$, the function $h^{(\ell)}$ in \eqref{hellinfty} reduces to the standard infinity norm $\|\cdot\|_\infty$ for which the expression of the epigraphical projection has been recently given in \cite{Ding2012_j_math_matrix_cone}.
Note that this proposition can be employed to efficient deal with $\boldsymbol{\ell}_{1,\infty}$ regularization which has attracted
much interest recently \cite{Turlach_B_2005_j-technometrics_simultaneous_vs,Quattoni_A_2009_p-icml_efficient_plr,Chen_Y_2012_j-ieee-tsp_recursive_ll}.
\end{itemize}

\section{Experimental Results}\label{s:algo}\label{sec:exp}
In this section, we provide numerical examples to illustrate the usefulness of the proposed 
epigraphical projection method.
The first presented experiment focuses on applications in image restoration involving projections onto $\boldsymbol{\ell}_{1,p}$-balls where $p\in \{2,\pinf\}$. 
The second experiment deals with a pulse shape design problem based on Proposition~\ref{ex:epidistl}.
 
\subsection{Image restoration}
\subsubsection{Degradation model}
Set $\HH = \RR^{\overline{N}}$. Denote by $\overline{x} = \parens{\overline{x}^{(n)}}_{1 \le n \le \overline{N}} \in \RR^{\overline{N}}$ the signal of interest, and by 
$z \in \RR^N$ an observation vector such that $z = DA\overline{x} + b$. It is assumed that $A \in \RR^{\overline{N} \times \overline{N}}$ is a linear operator, 
$D \in \RR^{N \times \overline{N}}$ is a decimation operator,$^*$\footnote[0]{$^*$$D$ thus corresponds to $N \leq \overline{N}$ lines of the identity
$\overline{N}\times \overline{N}$ matrix.} and $b \in \RR^N$ is a realization of a zero-mean white Gaussian noise vector. The recovery of $\overline{x}$ from the degraded observations is performed by following a variational approach which aims at solving the following problem
\begin{equation}\label{eq:problem_old}
\minimize{x \in [\underline{\mu},\overline{\mu}]^{\overline{N}}} \norm{DAx - z}^2
\quad\subto\quad
\sum_{\ell =1}^L \norm{\Omega_\ell \, B_\ell \, F \, x}_p \le \eta,
\end{equation}
where $(\underline{\mu},\overline{\mu})\in \RR^2$ with $\underline{\mu} \le \overline{\mu}$, $\eta$ is a real positive constant, and $F \in \RR^{K \times \overline{N}}$ 
is the linear operator associated with an analysis transform. Furthermore, for every $\ell \in \{1,\ldots,L\}$, $B_\ell \in \RR^{M^{(\ell)}\times K}$ 
is a \emph{block-selection linear operator} which selects a block of $M^{(\ell)}$ data from its input vector.$^*$ \footnote[0]{$^*$This means that there exist distinct indices $m_1,\ldots,m_{M^{(\ell)}}$ in $\{1,\ldots,K\}$  such that, for every $y = (y^{(k)})_{1\leq k \leq K}\in \RR^K$, $B_{\ell} y = (y^{(m_j)})_{1 \le j \le M^{(\ell)}}$.} 
For every $\ell \in \{1,\ldots,L\}$, $\Omega_\ell$ denotes an $M^{(\ell)}\times M^{(\ell)}$ diagonal matrix of real positive weights.

The term $\norm{DA x - z}^2$ is the \emph{data fidelity} corresponding to the negative log-likelihood of $x$. The bounds $\underline{\mu}$ and $\overline{\mu}$ allow us to take into account the \emph{value range} of each component of $\overline{x}$. The second constraint involved in Problem \eqref{eq:problem_old} promotes solutions having a sparse analysis representation. 
Indeed, it reduces to the weighted $\bell_1$-norm criterion found in \cite{Candes_2008_j-four-anal-appl_enhancing_srl} when each block reduces to a singleton (i.e. $L = K$, and, for every $\ell \in \{1,\ldots,L\}$, $M^{(\ell)} = 1$ and $B_\ell\,y = y^{(\ell)}$). It captures the $\bell_{1,2}$ criteria present in \cite{Rudin_L_1992_tv_atvmaopiip, Gilboa_G_2009_j-siam-mms_nonlocal_oai, Peyre_G_2011_p-eusipco_gro_sop, Bayram2012} when $p=2$. It matches the $\bell_{1,\infty}$ criterion proposed in \cite{Quattoni_A_2009_p-icml_efficient_plr} when $p=\pinf$. 

Note that overlapping blocks in Constraint \eqref{eq:problem_old} are dealt with by increasing the dimensionality of the problem (through the linear transform $F$) and then using an usual non-overlapping block selection operator (denoted $B_\ell$). Let us define $\Lambda = \bracks{B_1^\top\Omega_1,\;\ldots, B_L^\top \Omega_L}^\top$ and
 \begin{equation}\label{eq:proj_E}
C_1 = \menge{ y \in \RR^{M}}{\sum_{\ell = 1}^L \norm{{\rm y}^{(\ell)}}_p \le \eta }
\end{equation}
where $M = M^{(1)} + \cdots + M^{(L)}$ and the same decomposition as in \eqref{e:decblocy} is performed. 
Then, it can be observed that Problem~\eqref{eq:problem_old} is a particular case of Problem~\ref{p:gen} where 
$S=2$, $R=1$, $g_1 = \norm{D\cdot - z}^2$, $T_1 = A$, $H_1=\Lambda F$, $C_1$ is the above $\bell_{1,p}$-ball, $H_2 = I$ and $C_2 = [\underline{\mu},\overline{\mu}]^{\overline{N}}$.

\subsubsection{Algorithmic solution}
As already mentioned in Section~\ref{ssec:algprox}, various proximal algorithms can be used to solve
non-smooth convex optimization problems and would potentially benefit from the proposed epigraphical splitting technique. In this work, we will 
consider the Monotone+Lipschitz Forward Backward Forward (M+LFBF) algorithm, which is a primal-dual method recently proposed in \cite{Combettes_P_2011_j-svva_pri_dsa}, 
and the Simultaneous-Direction Method of Multipliers algorithm (SDMM) \cite{Eckstein1994_j_optim_parallel_ADMM, Setzer_S_2009_j-jvcir_deblurring_pibsbt, Combettes_P_2010_inbook_proximal_smsp}, which is a parallelized version of ADMM. Their convergence is guaranteed (under weak conditions) and their structure makes them suitable for implementation on parallel architectures. 

Although both algorithms address a wide class of convex optimization problems, SDMM requires to invert the matrix $Q = A^\top A + I + F^\top\Lambda \Lambda F$. This is a well-known limitation of ADMM-like algorithms, which can be circumvented in the case when the matrix $Q$ is diagonalizable in the DFT domain (e.g.\ see \cite{Afonso_M_2010_j-tip_fast_iruvsco,Afonso_M_2009_j-tip_augmented_lacofiip}). In the present case, the matrix $Q$ is not diagonalizable since $\Lambda \neq I$. Consequently, SDMM requires to introduce some auxiliary variables into the minimization problem (see \cite{Peyre_G_2011_p-eusipco_gro_sop, Briceno_L_2011_j-math-imaging-vis_pro_ami}) in order to solve the original problem \eqref{eq:problem_old}. As we will see in Tables \ref{tab:nltv3_2}-\ref{tab:nltv3_inf}, this makes ADMM-like approaches much slower than primal-dual approaches.

The main difficulty in solving Problem~\eqref{eq:problem_old} stems from the constraint in \eqref{eq:proj_E}. The point is that proximal algorithms (as well as most of the applicable algorithms) require to compute the projection onto $C_1$. Specific numerical methods \cite{VanDenBerg_E_2008_j-siam-sci-comp_pro_pfb,Weiss_P_2009_Efficient_stvm, Quattoni_A_2009_p-icml_efficient_plr,Fadili_J_2011_tip_tv_proj_fos} have been developed for this purpose.
The aim of this section is to propose an alternative method based on the splitting principle presented in Section~\ref{sec:spl}. So doing, the resulting problem can be efficiently addressed by proximal algorithms. The two possible approaches are now detailed. 

\begin{itemize}
\item \textsl{Epigraphical method} -- The principle of this method is to decompose $C_1$ into the closed half-space defined by \eqref{e:defV} and the closed convex set defined by \eqref{e:defE} with $h^{(\ell)} = \norm{\cdot}_p$. We have thus to
\begin{equation}\label{eq:problem_new}
\minimize{\parens{x,\zeta_1} \in C_2 \times V_1} g_1(Ax)
\quad\subto\quad 
\parens{\Lambda F \, x, \, \zeta_1} \in E_1.
\end{equation}
The advantage of this decomposition is that the projections onto $V_1$ and $E_1$ have closed forms. Indeed, $P_{V_1}$ is the projection onto a half-space, while $P_{E_1}$ is given by Proposition~\ref{ex:norm_l2} for $p=2$ and Proposition~\ref{ex:norm_linf} for $p=\pinf$. We are then able to solve Problem \eqref{eq:problem_new} by means of algorithms such as M+LFBF or SDMM. The associated iterations are given in Algorithm~\ref{algo:epi}. Note that replacing the inequality in the definition of $V_1$ by an equality constraint ${\sf 1}_{L_1}^\top\zeta_1 = \eta_1$ was observed to make little difference in the numerical behaviour of SDMM.

\newcommand{\myvec}[2]{\parens[\Big]{#1\;,\;#2}}
\begin{algorithm}
\caption{M+LFBF  for solving Problem~\eqref{eq:problem_new}}\label{algo:epi}
{\footnotesize
\[
\begin{array}{l}
\mathrm{Initialization}\\
\left\lfloor
\begin{array}{l}
\parens{v^{[0]}, \nu^{[0]}} \in \RR^{M} \times \RR^L\\
\parens{x^{[0]}, \zeta^{[0]}} \in \RR^{\overline{N}} \times  \RR^L\\
\theta = 2\Vert A \Vert^2 + \max\{\norm{{\Lambda} F},1\}\\
\epsilon \in ]0, \frac{1}{\theta+1}[\\
\end{array}
\right.\\[5mm]
\mathrm{For}\; i = 0, 1, \dots\\
\left\lfloor
\begin{array}{l}
\displaystyle\gamma_i \in \bracks*{\epsilon, \frac{1-\epsilon}{\theta}}\\\\
\parens[\Big]{\widehat{x}^{[i]},\widehat{\zeta}^{[i]}} = \parens[\Big]{x^{[i]},\zeta^{[i]}} - \gamma_i \parens[\Big]{A^\top \nabla g_1(A x^{[i]}) + F^\top\Lambda^\top v^{[i]}, \nu^{[i]}}\\
\parens[\Big]{p^{[i]},\rho^{[i]}} = P_{C_2 \times V_1}\parens[\Big]{\widehat{x}^{[i]},\widehat{\zeta}^{[i]}}\\
\parens[\Big]{\widehat{v}^{[i]},\widehat{\nu}^{[i]}} = \parens[\Big]{v^{[i]},\nu^{[i]}} + \gamma_i \parens[\Big]{{\Lambda} F \, x^{[i]},\zeta^{[i]}}\\
\parens[\Big]{a^{[i]},\alpha^{[i]}} = \parens[\Big]{\widehat{v}^{[i]},\widehat{\nu}^{[i]}} - \gamma_i P_{E_1}\parens[\Big]{\widehat{v}^{[i]}/\gamma_i,\widehat{\nu}^{[i]}/\gamma_i}\\
\parens[\Big]{v^{[i+1]},\nu^{[i+1]}} = \parens[\Big]{a^{[i]},\alpha^{[i]}} + \gamma_i \parens[\Big]{\Lambda F (p^{[i]}-x^{[i]}),  \rho^{[i]}-\zeta^{[i]}}\\
\parens[\Big]{\widetilde{x}^{[i]},\widetilde{\zeta}^{[i]}} = \parens[\Big]{p^{[i]},\rho^{[i]}} - \gamma_i \parens[\Big]{A^\top \nabla g_1(A p^{[i]}) + F^\top\Lambda^\top a^{[i]},\alpha^{[i]}}\\
\parens[\Big]{x^{[i+1]},\zeta^{[i+1]}} = \parens[\Big]{x^{[i]}- \widehat{x}^{[i]}+ \widetilde{x}^{[i]},\zeta^{[i]} - \widehat{\zeta}^{[i]} + \widetilde{\zeta}^{[i]}}\\
\end{array}
\right.
\end{array}
\]
}
\end{algorithm}

\item \textsl{Direct method} -- For completeness, we also consider the projection onto $C_1$ with the algorithm in \cite{VanDenBerg_E_2008_j-siam-sci-comp_pro_pfb} when \mbox{$p=2$},$^*$ \footnote[0]{$^*$Code available at \texttt{\scriptsize{www.cs.ubc.ca/$\sim$mpf/spgl1}}} or the iterative algorithm in \cite{Quattoni_A_2009_p-icml_efficient_plr} when $p=\pinf$.\footnote{Code available at \texttt{\scriptsize{www.lsi.upc.edu/$\sim$aquattoni}}} In this case, proximal methods (such as M+LFBF and SDMM) can be used to solve directly Problem \eqref{eq:problem_old}. 
\end{itemize}
According to the general results in \cite[Theorem~4.2]{Combettes_P_2011_j-svva_pri_dsa} and \cite{Combettes_P_2010_inbook_proximal_smsp}, the sequence $\big(x^{[i]}\big)_{i\in \NN}$ generated by M+LFBF or SDMM is guaranteed to converge to a (global) minimizer of Problem~\eqref{eq:problem_old}.

\subsubsection{Smoothness constraint}
In the context of image restoration, the quality of the results obtained through a variational approach strongly depends on the ability to model the regularity present in images. Since natural images are often piecewise smooth, popular regularization models tend to penalize the image gradient. In this regard, \textit{total variation} (TV) \cite{Rudin_L_1992_tv_atvmaopiip} has emerged as a simple, yet successful, convex optimization tool. However, TV fails to preserve textures, details and fine structures, because they are hardly distinguishable from noise. To improve this behaviour, the TV model has been extended by using a non-locality principle \cite{Buades_A_2005_j-siam-mms_review_idawno,Gilboa_G_2009_j-siam-mms_nonlocal_oai}. Another approach to overcome these limitations is to replace the gradient operator with a frame representation which yields a more suitable sparse representation of the image \cite{Mallat_S_1999_ap_wavelet_awtosp}. The connections between these two different approaches have been studied in \cite{Cai_JF_2012_restoration_TV_frame}. It is still unclear which approach leads to the best results. However, there are some evidences that \textit{non-local} (NL) TV may perform better in some image restoration tasks \cite{Zhang_X_2010_j-siam-is_bregmanized_nrd,Peyre_G_2011_NL_reg_inv_prob,Peyre_G_2011_review_adapt_image}. We thus focus our attention on NLTV-based constraints, although our proposed algorithm is quite 
general and it can also be adapted to frame-based approaches.

By appropriately selecting the operators $F$, $B_\ell$ and $\Omega_\ell$ in Problem~\eqref{eq:problem_old}, we can integrate the NLTV measures in a constrained convex optimization approach. In our experiments, we propose to evaluate the performances of two NLTV constraints that constitute particular cases of the one considered in \eqref{eq:problem_old} when $L = \overline{N}$. They are described for 2D data in the following.
\begin{itemize}

\item \textsl{$\bell_2$-NLTV} -- This constraint has the form
\begin{equation}
\sum_{\ell =1}^{\overline{N}} \parens*{\sum_{n \in \mathcal{N}_\ell \subset \mathcal{W}_\ell}\omega_{\ell,n}\parens{x^{(\ell)} - x^{(n)}}^2}^{1/2} \le \eta
,
\end{equation}
where $\mathcal{N}_\ell$ is the \textit{neighbourhood support} at position $\ell$ and $\mathcal{W}_\ell$ is the set of positions $n \in \braces{1,\dots,\overline{N}}\setminus\{\ell\}$ located into a $Q \times Q$ window centered at $\ell$, where $Q \in \NN$ is odd. This constraint is a particular case of the one considered in \eqref{eq:problem_old} where
$K = (Q^2-1) \overline{N}$ and $F$ is a concatenation of discrete difference operators $F_{q_1,q_2}$ with $(q_1,q_2) \in \{-(Q-1)/2,\ldots,(Q-1)/2\}^2\setminus \{(0,0)\}$. More precisely, for every $(q_1,q_2)$, $F_{q_1,q_2}$ is a 2D filter with 
impulse response: for every $(n_1,n_2)\in \ZZ^2$,
\begin{equation}
f_{q_1,q_2}^{(n_1,n_2)} = \begin{cases}
1, & \mbox{if $n_1 = n_2 = 0$},\\
-1, & \mbox{if $n_1 = q_1$ and $n_2 = q_2$},\\
0, & \mbox{otherwise.}
\end{cases}
\end{equation}
In addition, for every $\ell \in \{1,\ldots,\overline{N}\}$, \mbox{$M^{(\ell)} \le Q^2-1$}, $B_\ell$ selects the components of $Fx$ corresponding to differences $(x^{(\ell)}-x^{(n)})_{ n \in \mathcal{N}_\ell}$, and the positive weights $(\omega_{\ell,n})_{n \in \mathcal{N}_\ell}$ are gathered in the diagonal matrix $\Omega_\ell$. 

\item \textsl{$\bell_\infty$-NLTV}-- We consider the following constraint
\begin{equation}
\sum_{\ell =1}^{\overline{N}} \max_{n \in \mathcal{N}_\ell}\big\{\omega_{\ell,n} \, \abs{x^{(\ell)} - x^{(n)}}\big\} \le \eta.
\end{equation}
We proceed similarly to the previous constraint, except that the $\bell_\infty$-norm is now substituted for the $\bell_2$-norm.
\end{itemize}
Note that the classical isotropic TV constraint (designated by $\bell_2$-TV in the following) constitutes a particular case of the $\bell_2$-NLTV one, where 
each neighbourhood $\mathcal{N}_\ell$ only contains the horizontal/vertical neighbouring pixels ($M^{(\ell)} \equiv 2$) and the weights are $\omega_{\ell,n} \equiv 1$. Similarly, the $\bell_\infty$-TV constraint is a special case of the $\bell_\infty$-NLTV one.

\subsubsection{Weight estimation and neighbourhood choice}
To set the weights, we got inspired from the Non-Local Means approach originally described in \cite{Buades_A_2005_j-siam-mms_review_idawno}. Here, for every $\ell \in \{1,\ldots,\overline{N}\}$ and $n\in \mathcal{N}_\ell$, the weight $\omega_{\ell, n}$ depends on the similarity between patches built around the pixels $\ell$ and $n$ of the image. Since our degradation process involves some missing data, a two-step approach has been adopted. In the first step, the $\bell_2$-TV approach is used in order to obtain an estimate $\widetilde{x}$ of the target image. This estimate is subsequently used in the second step to compute the weights through a \textit{self-similarity} measure, yielding
\begin{equation}\label{eq:weight}
\omega_{\ell,n} = \widetilde{\omega}_\ell\exp\left( - \delta^{-2} \; \norm{\widetilde{B}_\ell \widetilde{F}_\ell \widetilde{x} - \widetilde{B}_n \widetilde{F}_n \widetilde{x}}^2\right),
\end{equation}
where $\delta \in \RR\setminus \{0\}$, $\widetilde{\omega}_\ell \in \RPPbis$, $\widetilde{B}_\ell$ (resp. $\widetilde{B}_n$) selects a $\widetilde{Q} \times \widetilde{Q} $ patch centered at position $\ell$ (resp. $n$) and $\widetilde{F}_\ell$ (resp. $\widetilde{F}_n$) is a linear processing of the image depending on the position $\ell$ (resp. $n$). The constant $\widetilde{\omega}_\ell$ is set so as to normalize the weights (i.e. $\sum_{n \in \mathcal{N}_\ell} \omega_{\ell,n} = 1$).

The measure in \eqref{eq:weight} generalizes the one proposed in \cite{Buades_A_2005_j-siam-mms_review_idawno}, which corresponds to the case when $\widetilde{F}_\ell$ (resp. $\widetilde{F}_n$) 
reduces to a Gaussian function with mean $\ell$ (resp. $n$). In the present work, we consider the \textit{foveated self-similarity} measure recently introduced in \cite{Foi_A_2012_p-spie_foveated_ssnif}, due to its better performance in denoising. This approach can be derived from \eqref{eq:weight} by setting $\widetilde{F}_\ell$ (resp. $\widetilde{F}_n$) to a set of low-pass Gaussian filters whose variances increase as the spatial distance from the patch center $\ell$ (resp. $n$) grows.
  
For every $\ell \in \{1,\ldots,\overline{N}\}$, the neighbourhood $\mathcal{N}_\ell$ is built according to the procedure described in \cite{Gilboa_G_2007_j-siam-mms_nonlocal_irss}. In practice, we limit the size of the neighbourhood, so that $M^{(\ell)} \le \overline{M}$ (a possible choice of $\overline{M}$ is given in the next paragraph).

\subsubsection{Numerical results -- analysis of convergence times}
In this section, the execution time of the proposed epigraphical technique is evaluated w.r.t.\ the direct method involving standard numerical solutions. 

In the following experiments, if not specified otherwise, the degradation matrix $A$ is a convolution which consists of a $3 \times 3$ uniform blur and the decimation matrix $D$ randomly removes $60\%$ of the pixels ($N = 0.4 \times \overline{N}$). The standard deviation of the additive white Gaussian noise is equal to $\sigma = 10$. Since we deal with natural images, the data range bounds are $\underline{\mu} = 0$ and $\overline{\mu} = 255$. For the smoothness constraint, we set $Q = 11$, $\widetilde{Q} = 5$, $\delta = 35$ and $\overline{M} = 14$.

We present the results obtained with the image \emph{boat} cropped at $256\times256$ ($\overline{N} = 256^2$), since a similar behaviour was observed for other images. The stopping criterion is set to $\norm{x^{[i+1]}-x^{[i]}} \le 10^{-4} \norm{x^{[i]}}$. For the $\bell_{1,p}$-ball projectors needed by the direct method, we used the software publicly available on-line \cite{VanDenBerg_E_2008_j-siam-sci-comp_pro_pfb, Quattoni_A_2009_p-icml_efficient_plr}. 

Note that SDMM requires to invert the matrix $Q = A^\top A + I + F^\top\Lambda \Lambda F$, which we address by resorting to the solution proposed in \cite{Peyre_G_2011_p-eusipco_gro_sop, Briceno_L_2011_j-math-imaging-vis_pro_ami}. In order to make the operators $A$ and $F$ diagonalizable 
in the DFT domain, a periodic extension of the image is performed.

In practice, the constraint bound $\eta$ may not be known precisely.
Although it is out of the scope of this paper to devise an optimal strategy to set this bound, it is important to evaluate the impact of its choice on our method 
performance. 
In the following, we compare the epigraphical approach with the direct computation of the projections (via standard iterative solutions)
for different choices of regularization constraints 
and values of $\eta$.
\begin{itemize}

\item \textsl{Total Variation} -- Tables \ref{tab:tv_eta} and \ref{tab:tv_eta_inf} report a comparison between the direct and epigraphical methods 
for $\bell_{2}$-TV and $\bell_{\infty}$-TV, respectively. For more readability, the values of $\eta$ are expressed as a multiplicative factor of the $\bell_p$-TV-semi-norm 
of the original image. The convergence times 
indicate that the epigraphical approach yields a faster convergence than the direct approach for SDMM and M+LFBF. Moreover, the numerical results show that errors within $\pm 20\%$ 
from the optimal value for $\eta$ lead to SNR variations within $2\%$.

Figs.\ \ref{fig:tv_prof:time} and \ref{fig:tv_prof_inf:time}  show the relative error $\norm{x^{[i]}-x^{[\infty]}}/\norm{x^{[\infty]}}$ as a function of the computational time, where $x^{[\infty]}$ denotes the solution computed after a large number of iterations (typically, 5000 iterations). The dashed line presents the results for the direct method while the solid line refers to the epigraphical one. These plots show that the epigraphical approach is faster despite it requires more iterations in order to converge. This can be explained by the computational cost of the subiterations required by the direct projections onto the $\bell_{1,p}$-ball.

\begin{table*}[p]
  \centering%
  \caption{Results for the $\bell_2$-TV constraint and different values of $\eta$}
  {\scriptsize
  \begin{tabular}{+c@{\quad}^c @{\quad} ^c@{\;}^c ^c@{\;}^c @{}^c @{\;}^c@{}^c @{\qquad} ^c@{\;}^c ^c@{\;}^c @{}^c @{\;} ^c}
    \toprule
    \multirow{4}{*}{$\eta$} & \multirow{4}{*}{SNR (dB) -- SSIM} & \multicolumn{6}{c}{SDMM} && \multicolumn{6}{c}{M+LFBF} \\
    \cmidrule{3-8}\cmidrule(r){10-15}
    & & \multicolumn{2}{c}{direct} & \multicolumn{2}{c}{epigraphical} && \multirow{2}{*}{speed up} && \multicolumn{2}{c}{direct} & \multicolumn{2}{c}{epigraphical} && \multirow{2}{*}{speed up}\\
    \cmidrule(r){3-4}\cmidrule(lr){5-6}\cmidrule(r){10-11}\cmidrule(lr){12-13}
                            &                           & \# iter. & {sec.}    & \# iter. & {sec.}      && && \# iter. & {sec.}    & \# iter. & {sec.}      &&\\
    \midrule
		0.45 & 19.90 -- 0.733 & 107 &  6.07 & 174 &  2.03 && 2.99 && 113 &  6.15 & 182 &  3.49 && 1.76 \\
		0.50 & 20.18 -- 0.745 & 117 &  6.95 & 159 &  1.95 && 3.57 && 116 &  6.97 & 168 &  3.44 && 2.03 \\
\brow	0.56 & 20.23 -- 0.745 & 129 &  8.36 & 153 &  1.90 && 4.41 && 124 &  8.17 & 159 &  3.01 && 2.72 \\
		0.62 & 20.16 -- 0.737 & 141 &  9.44 & 155 &  1.83 && 5.16 && 131 &  8.62 & 159 &  3.26 && 2.65 \\
		0.67 & 20.00 -- 0.724 & 154 & 10.20 & 162 &  2.17 && 4.71 && 140 & 10.00 & 164 &  2.84 && 3.52 \\
    \bottomrule
  \end{tabular}
  }
  \label{tab:tv_eta}
\end{table*}

\begin{table*}[p]
  \centering%
  \caption{Results for the $\bell_\infty$-TV constraint and different values of $\eta$}
  {\scriptsize
  \begin{tabular}{+c@{\quad}^c @{\quad} ^c@{\;}^c ^c@{\;}^c @{}^c @{\;}^c@{}^c @{\qquad} ^c@{\;}^c ^c@{\;}^c @{}^c @{\;} ^c}
    \toprule
    \multirow{4}{*}{$\eta$} & \multirow{4}{*}{SNR (dB) -- SSIM} & \multicolumn{6}{c}{SDMM} && \multicolumn{6}{c}{M+LFBF} \\
    \cmidrule{3-8}\cmidrule(r){10-15}
    & & \multicolumn{2}{c}{direct} & \multicolumn{2}{c}{epigraphical} && \multirow{2}{*}{speed up} && \multicolumn{2}{c}{direct} & \multicolumn{2}{c}{epigraphical} && \multirow{2}{*}{speed up}\\
    \cmidrule(r){3-4}\cmidrule(lr){5-6}\cmidrule(r){10-11}\cmidrule(lr){12-13}
                            &                           & \# iter. & {sec.}    & \# iter. & {sec.}      && && \# iter. & {sec.}    & \# iter. & {sec.}      &&\\
    \midrule
		0.45 & 19.52 -- 0.726 & 160 & 312.55 & 231 &  3.89 &&  80.43 && 183 & 347.10 & 252 &  6.43 && 53.96\\
		0.50 & 19.71 -- 0.734 & 168 & 342.01 & 215 &  3.75 &&  91.31 && 185 & 368.24 & 236 &  5.83 && 63.17\\
\brow	0.56 & 19.71 -- 0.728 & 180 & 373.60 & 211 &  3.49 && 106.93 && 189 & 386.29 & 229 &  5.53 && 69.91\\
		0.62 & 19.59 -- 0.715 & 196 & 412.68 & 216 &  3.67 && 112.50 && 198 & 411.04 & 229 &  5.86 && 70.15\\
		0.67 & 19.39 -- 0.698 & 211 & 448.77 & 223 &  3.76 && 119.27 && 207 & 437.66 & 234 &  5.76 && 75.96\\
    \bottomrule
  \end{tabular}
  }
  \label{tab:tv_eta_inf}
\end{table*}
  
\begin{table*}[p]
	\centering%
	\caption{Results for the $\bell_2$-NLTV constraint and some values of $\eta$ and $Q$} 
	{\scriptsize
	\begin{tabular}{+c@{\quad}^c @{\quad} ^c@{\;}^c ^c@{\;}^c @{}^c @{\;}^c@{}^c @{\qquad} ^c@{\;}^c ^c@{\;}^c @{}^c @{\;} ^c}
		\toprule
		\multirow{4}{*}{$\eta$} & \multirow{4}{*}{SNR (dB) -- SSIM} & \multicolumn{6}{c}{SDMM} && \multicolumn{6}{c}{M+LFBF} \\
		\cmidrule{3-8}\cmidrule(r){10-15}
		& & \multicolumn{2}{c}{direct} & \multicolumn{2}{c}{epigraphical} && \multirow{2}{*}{speed up} && \multicolumn{2}{c}{direct} & \multicolumn{2}{c}{epigraphical} && \multirow{2}{*}{speed up}\\
		\cmidrule(r){3-4}\cmidrule(lr){5-6}\cmidrule(r){10-11}\cmidrule(lr){12-13}
		                   &                           & \# iter. & {sec.}    & \# iter. & {sec.}      && && \# iter. & {sec.}    & \# iter. & {sec.}      &&\\
		\midrule
		\multicolumn{15}{c}{\textsl{Neighbourhood size: $Q = 3$}}\\
		0.43 & 20.82 -- 0.757 & 208 & 20.67 & 211 & 10.93 && 1.89 && 82 &  6.95 & 93 &  3.76 && 1.85\\
		0.49 & 20.97 -- 0.765 & 167 & 16.84 & 177 &  9.01 && 1.87 && 75 &  6.61 & 83 &  3.47 && 1.91\\
		0.54 & 21.02 -- 0.767 & 147 & 15.31 & 157 &  7.93 && 1.93 && 71 &  6.45 & 77 &  3.15 && 2.04\\
		0.59 & 20.98 -- 0.764 & 134 & 14.44 & 148 &  7.67 && 1.88 && 72 &  6.58 & 77 &  3.24 && 2.03\\
		0.65 & 20.88 -- 0.757 & 133 & 14.82 & 136 &  7.11 && 2.08 && 76 &  7.53 & 80 &  3.27 && 2.30\\

		\multicolumn{15}{c}{\textsl{Neighbourhood size: $Q = 5$}}\\
		0.43 & 21.00 -- 0.766 & 301 & 56.03 & 343 & 45.18 && 1.24 && 82 &  8.51 & 90 &  5.43 && 1.57\\
		0.49 & 21.15 -- 0.773 & 260 & 49.03 & 302 & 39.64 && 1.24 && 75 &  7.90 & 81 &  4.90 && 1.61\\
\brow   0.54 & 21.20 -- 0.775 & 242 & 46.31 & 283 & 37.72 && 1.23 && 71 &  8.26 & 75 &  4.47 && 1.85\\
		0.59 & 21.17 -- 0.773 & 231 & 46.20 & 268 & 36.56 && 1.26 && 70 &  7.94 & 74 &  4.49 && 1.77\\
		0.65 & 21.08 -- 0.767 & 220 & 44.64 & 252 & 34.46 && 1.30 && 73 &  8.40 & 76 &  4.59 && 1.83\\
		\bottomrule
	\end{tabular}
	}
	\label{tab:nltv3_2}
\end{table*}
	
\begin{table*}[p]
	\centering%
	\caption{Results for the $\bell_\infty$-NLTV constraint and some values of \scriptsize $\eta$ and $Q$}
	{\scriptsize
	\begin{tabular}{+c@{\quad}^c @{\quad} ^c@{\;}^c ^c@{\;}^c @{}^c @{\;}^c@{}^c @{\qquad} ^c@{\;}^c ^c@{\;}^c @{}^c @{\;} ^c}
		\toprule
		\multirow{4}{*}{$\eta$} & \multirow{4}{*}{SNR (dB) -- SSIM} & \multicolumn{6}{c}{SDMM} && \multicolumn{6}{c}{M+LFBF} \\
		\cmidrule{3-8}\cmidrule(r){10-15}
		& & \multicolumn{2}{c}{direct} & \multicolumn{2}{c}{epigraphical} && \multirow{2}{*}{speed up} && \multicolumn{2}{c}{direct} & \multicolumn{2}{c}{epigraphical} && \multirow{2}{*}{speed up}\\
		\cmidrule(r){3-4}\cmidrule(lr){5-6}\cmidrule(r){10-11}\cmidrule(lr){12-13}
		                   &                           & \# iter. & {sec.}    & \# iter. & {sec.}      && && \# iter. & {sec.}    & \# iter. & {sec.}      &&\\
		\midrule
		\multicolumn{15}{c}{\textsl{Neighbourhood size: $Q = 3$}}\\
		0.43 & 20.78 -- 0.762 & 434 & 1470.46 & 449 & 25.03 && 58.76 && 225 & 730.26 & 244 & 12.35 && 59.15\\
		0.49 & 20.86 -- 0.764 & 395 & 1319.64 & 413 & 22.86 && 57.72 && 221 & 692.25 & 237 & 11.92 && 58.08\\
		0.54 & 20.83 -- 0.760 & 363 & 1193.61 & 382 & 21.46 && 55.62 && 217 & 667.50 & 233 & 11.46 && 58.22\\
		0.59 & 20.73 -- 0.752 & 340 & 1093.26 & 354 & 19.77 && 55.30 && 216 & 653.79 & 230 & 11.67 && 56.01\\
		0.65 & 20.58 -- 0.740 & 322 & 1007.55 & 336 & 18.64 && 54.06 && 216 & 643.00 & 229 & 11.45 && 56.18\\
		
		\multicolumn{15}{c}{\textsl{Neighbourhood size: $Q = 5$}}\\
		0.43 & 20.91 -- 0.769 & 384 & 2069.62 & 452 & 64.42 && 32.13 && 233 & 863.01 & 252 & 18.47 && 46.73\\
		0.49 & 20.98 -- 0.771 & 326 & 1700.34 & 412 & 58.66 && 28.99 && 231 & 822.06 & 247 & 18.36 && 44.77\\
\brow 	0.54 & 20.97 -- 0.767 & 290 & 1476.98 & 389 & 55.35 && 26.69 && 229 & 787.61 & 245 & 17.90 && 43.99\\
		0.59 & 20.88 -- 0.759 & 276 & 1336.16 & 374 & 52.64 && 25.38 && 230 & 772.42 & 245 & 17.57 && 43.96\\
		0.65 & 20.75 -- 0.749 & 268 & 1220.14 & 362 & 51.45 && 23.72 && 231 & 760.86 & 245 & 17.81 && 42.72\\
		\bottomrule
	\end{tabular}
	}
	\label{tab:nltv3_inf}
\end{table*}

\item \textsl{$\bell_2$-NLTV} -- Table \ref{tab:nltv3_2} collects the results of $\bell_2$-NLTV for different values of neighbourhood size $Q$. 
To set the weights, the first TV estimate is computed with $\eta = 0.56$. 
The convergence times show that the epigraphical approach is faster than the direct one for both considered algorithms. 
Moreover, it can be noticed that errors within $\pm 20\%$ from the optimal bound value lead to SNR variations within $1\%$. 
In \figurename~\ref{fig:nltv2_prof:time}, a plot similar to those in Figs.\ \ref{fig:tv_prof:time} and \ref{fig:tv_prof_inf:time} show the convergence profile. The epigraphical method requires about the same number of iterations as the direct one in order to converge. This results in a time reduction, as a single iteration of the epigraphical method is faster than one iteration of the direct method.
  
\item \textsl{$\bell_\infty$-NLTV} -- Table~\ref{tab:nltv3_inf} and \figurename~\ref{fig:nltv_inf_prof:time} show the results obtained with the $\bell_\infty$-NLTV constraint. Similarly to $\bell_\infty$-TV, the epigraphical approach greatly speeds up the convergence times.
\end{itemize}

\begin{figure*}[]%
   	\centering%
   	\subfloat[$\bell_2$-TV.]{\includegraphics[width=0.35\textwidth]{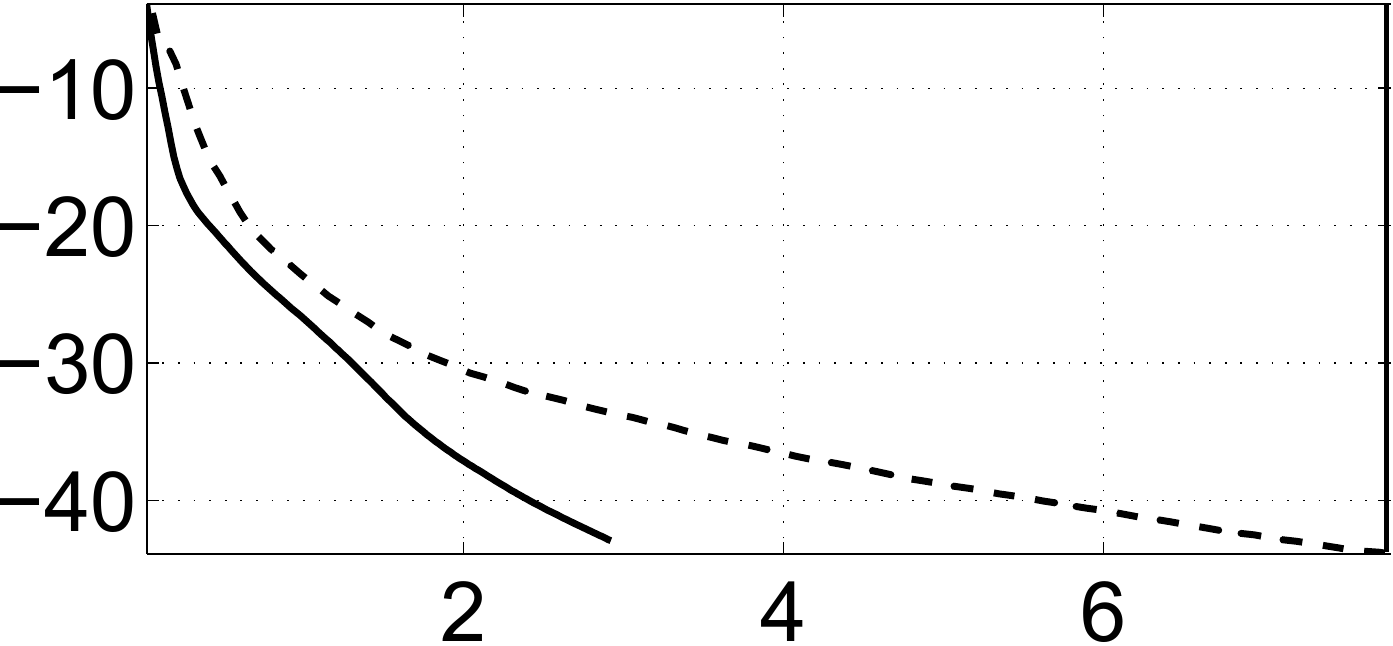}\label{fig:tv_prof:time}}
   	\qquad
   	\subfloat[$\bell_\infty$-TV.]{\includegraphics[width=0.35\textwidth]{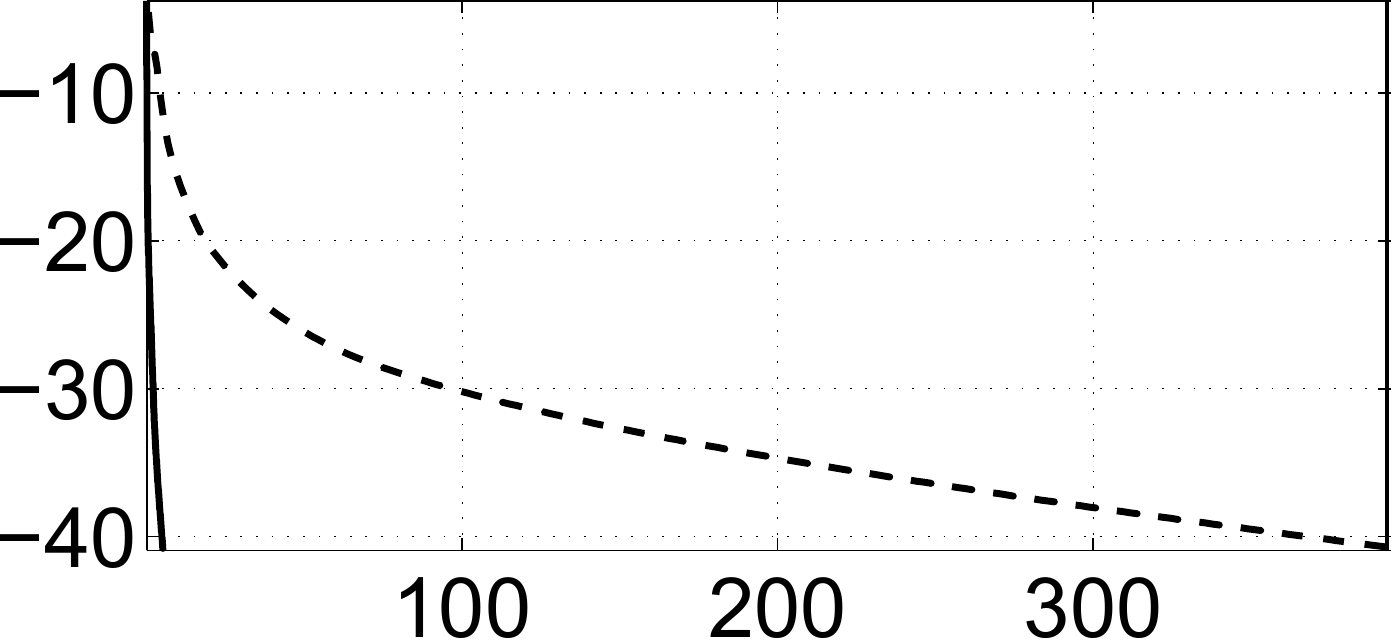}\label{fig:tv_prof_inf:time}}
  	
   	\subfloat[$\bell_2$-NLTV.]{\includegraphics[width=0.35\textwidth]{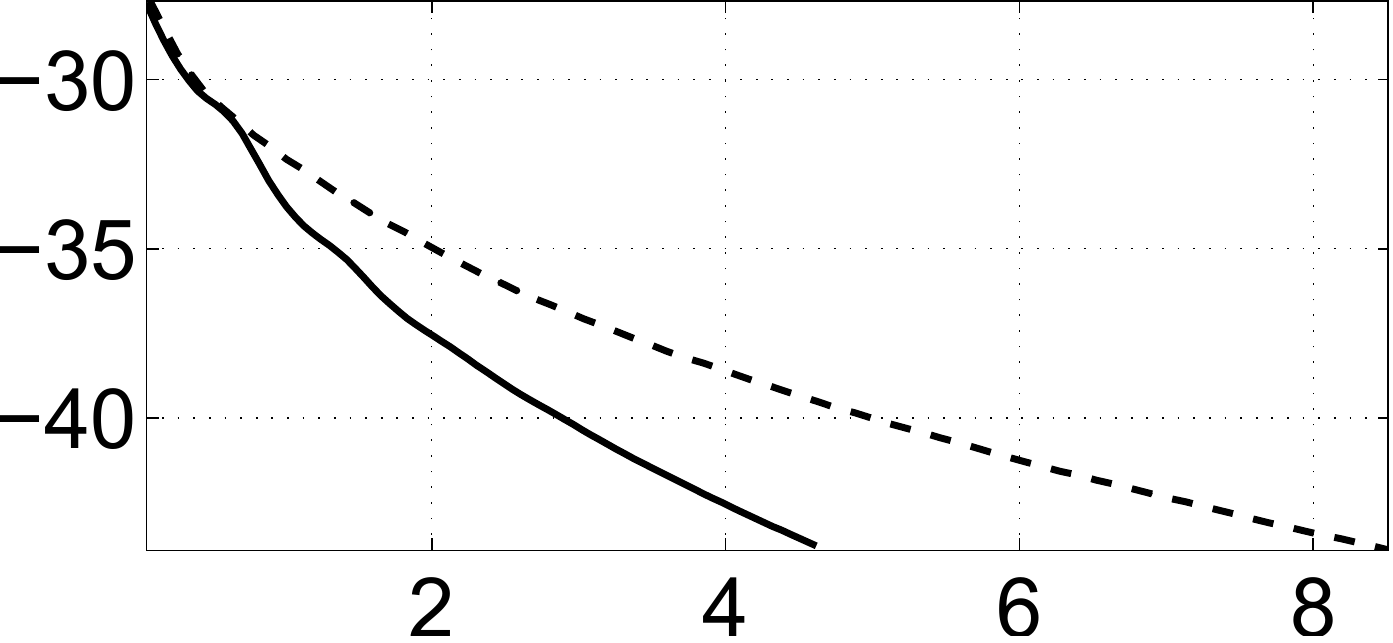}\label{fig:nltv2_prof:time}} 
   	\qquad
   	\subfloat[$\bell_\infty$-NLTV.]{\includegraphics[width=0.35\textwidth]{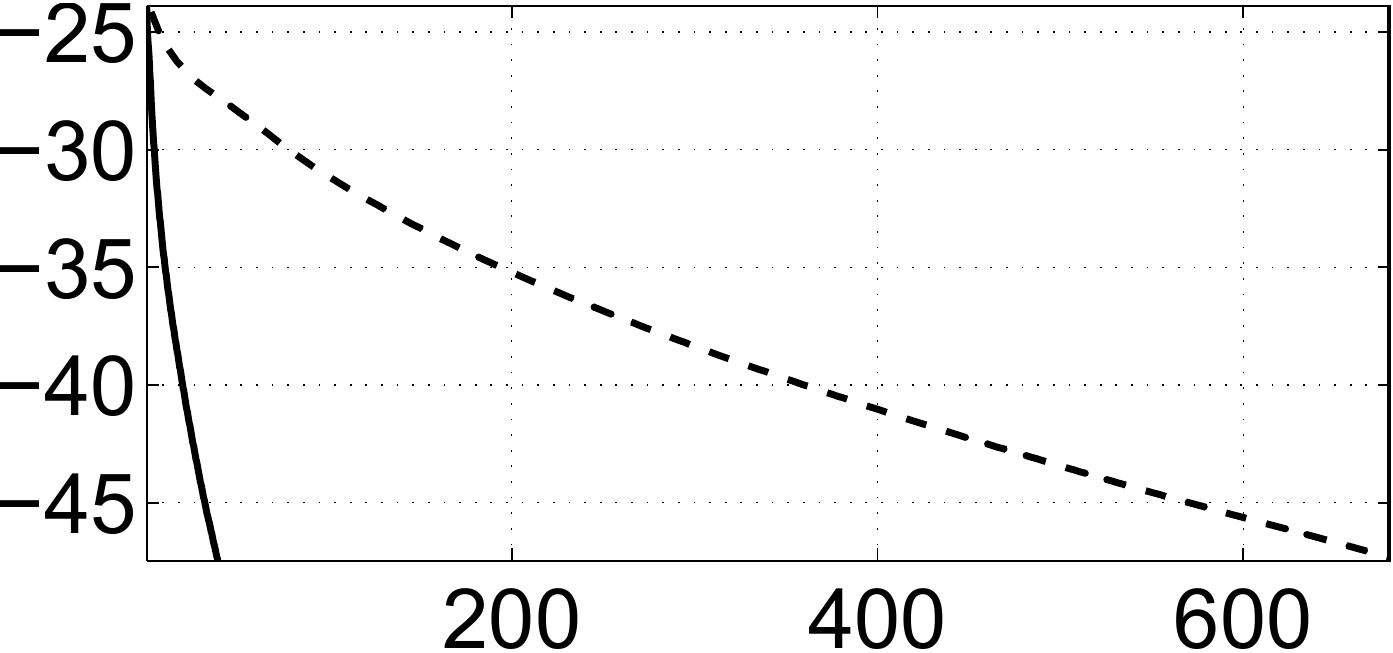}\label{fig:nltv_inf_prof:time}} 
   	 \caption{Comparison between epigraphical method (\textit{solid line}) and direct method (\textit{dashed line}): $\frac{\norm{x^{[i]}-x^{[\infty]}}}{\norm{x^{[\infty]}}}$ in dB vs time}%
   	\label{fig:tv_prof}%
\end{figure*}

\subsubsection{Numerical results -- restoration performance}
In this section, the quality of images reconstructed with our variational approach is evaluated for different choices of regularization constraints and comparisons are made with a state-of-the-art method. 
%
%
Extensive tests have been carried out on several standard images of different sizes. The SNR and SSIM \cite{Wang_Z_2009_spm_MES_lioli} results obtained by using the various previously introduced TV-like constraints are collected in Table~\ref{tab:all}. In addition, a comparison is performed between our method using an M+LFBF implementation
and the Gradient Projection for Sparse Reconstruction (GPSR) method \cite{Figueiredo_M_2007_j-ieee-sel-topics-sp_gra_psr}, which also relies on a variational approach. The constraint bound for both methods was hand-tuned in order to achieve the best SNR values. The best results are highlighted in bold.  A visual comparison is made in \figurename~\ref{fig:tv_images}, where two representative images are displayed. These results demonstrate the interest of considering non-local smoothness measures. Indeed, NLTV with $\bell_{1,2}$-norm proves to be the most effective constraint with gains in SNR and SSIM (up to 1.82 dB and 0.042) with respect to $\bell_2$-TV, which in turn outperforms GPSR. The better performance of NLTV seems to be related to its ability to better preserve edges and thin structures present in images. In terms of computational time, GPSR is about twice faster than $\bell_2$-NLTV. Our codes were developed in MATLAB$^*$ \footnote[0]{$^*$R2011b version on an Intel Xeon CPU at 2.80 GHz and 8 GB of RAM.}, the operators $F$ and $F^\top$ being implemented in C using mex files.

In order to complete the analysis, we report in \figurename~\ref{fig:noise} SNR/SSIM comparisons between $\bell_2$-NLTV and $\bell_2$-TV for different blur and noise configurations. These plots show that $\bell_2$-NLTV provides better results regardless of the degradation conditions.

\begin{table*}
  \centering%
  \caption{$\mathrm{SNR_{dB}}$ and SSIM results of our method and GPSR (noise parameters: blur = $3 \times 3$, $\sigma = 10$, decimation = $60\%$)}
  {\scriptsize
  \begin{tabular}{lc @{\qquad} rrrr c @{\quad} r@{ -- }r}
    \toprule
    SNR (dB) -- SSIM & $\overline{N}$ & \multicolumn{1}{c}{$\bell_2$-TV} & \multicolumn{1}{c}{$\bell_\infty$-TV} & \multicolumn{1}{c}{$\bell_2$-NLTV} & \multicolumn{1}{c}{$\bell_\infty$-NLTV} && \multicolumn{2}{c}{\textsc{gpsr}} \\
    \midrule
\textsc{Culicoidae}	& $256^2$	& 20.80 -- 0.855 & 20.25 -- 0.853 & \textbf{22.62} -- \textbf{0.897} & 22.38 -- 0.897 && 17.03 & 0.738\\
\textsc{Lena}		& $256^2$	& 23.18 -- 0.783 & 22.77 -- 0.769 & \textbf{24.18} -- \textbf{0.812} & 24.14 -- 0.812 && 20.26 & 0.678\\
\textsc{Boat}		& $256^2$	& 20.25 -- 0.739 & 19.74 -- 0.718 & \textbf{21.13} -- \textbf{0.770} & 20.77 -- 0.741 && 18.06 & 0.649\\
\textsc{Cameraman}	& $256^2$	& 20.06 -- 0.774 & 19.68 -- 0.755 & \textbf{20.71} -- \textbf{0.801} & 20.17 -- 0.743 && 17.92 & 0.673\\
\textsc{House}		& $256^2$	& 25.47 -- 0.823 & 24.70 -- 0.808 & \textbf{26.31} -- \textbf{0.836} & 25.87 -- 0.823 && 22.14 & 0.734\\
\textsc{Man}		& $256^2$	& 19.24 -- 0.725 & 18.96 -- 0.714 & \textbf{19.66} -- \textbf{0.741} & 19.51 -- 0.736 && 17.11 & 0.629\\
\textsc{Peppers}    & $512^2$	& 23.69 -- 0.801 & 23.25 -- 0.786 & \textbf{24.80} -- \textbf{0.829} & 24.45 -- 0.813 && 21.94 & 0.709\\
\textsc{Barbara}    & $512^2$	& 16.74 -- 0.653 & 16.64 -- 0.642 & \textbf{17.02} -- \textbf{0.673} & 16.99 -- 0.652 && 15.97 & 0.562\\
\textsc{Hill} 		& $512^2$	& 22.18 -- 0.723 & 21.89 -- 0.715 & \textbf{22.55} -- \textbf{0.735} & 22.43 -- 0.733 && 20.21 & 0.637\\
\textsc{Culicoidae} & $1024^2$	& 20.84 -- 0.855 & 20.49 -- 0.812 & \textbf{23.25} -- \textbf{0.885} & 22.57 -- 0.810 && 17.19 & 0.725\\
    \bottomrule
  \end{tabular}
  }
  \label{tab:all}
\end{table*}

\makeatletter
\define@key{Gin}{crop1}[true]{%
    \edef\@tempa{{Gin}{trim=20mm 20mm 20mm 20mm,clip}}%
    \expandafter\setkeys\@tempa
}
\define@key{Gin}{crop2}[true]{%
    \edef\@tempa{{Gin}{trim=20mm 20mm 20mm 20mm,clip}}%
    \expandafter\setkeys\@tempa
}
\makeatother
\newcommand{\mywidth}{0.2\textwidth}
\begin{figure*}%
	\centering%
	\subfloat[Culicoidae.]{\includegraphics[width=\mywidth]{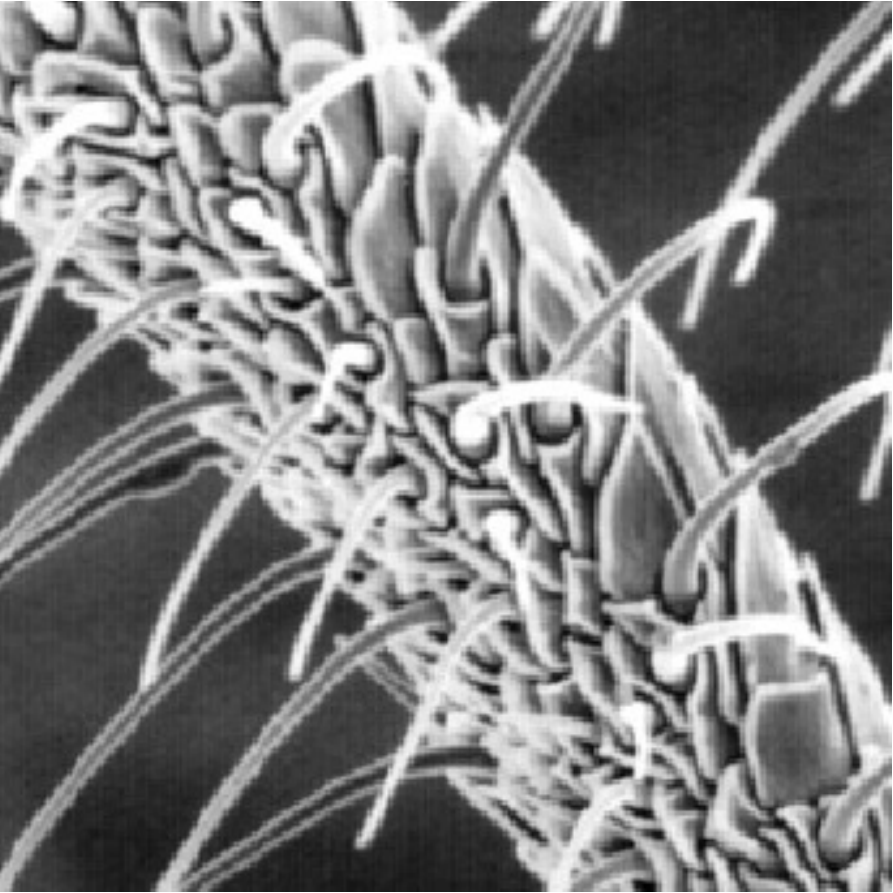}\label{fig:tv_images:orig1}}
	\hfill
	\subfloat[Degraded.]{\includegraphics[width=\mywidth]{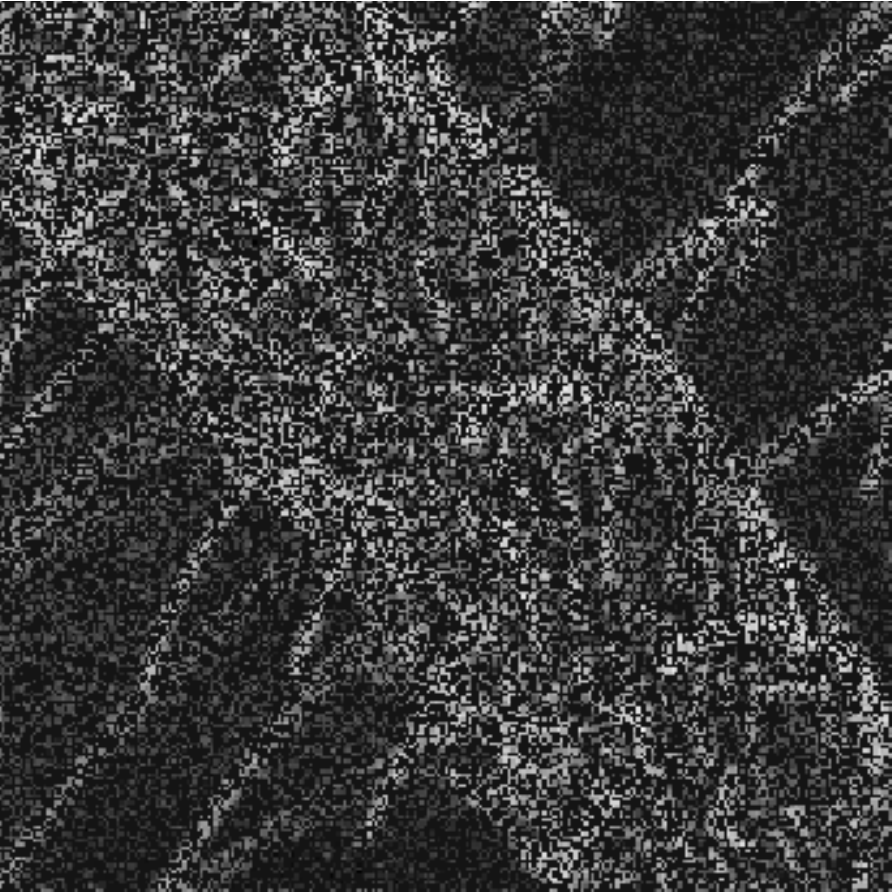}\label{fig:tv_images:degr1}}
	\hfill
	\subfloat[Zoom.]{\includegraphics[crop1,width=\mywidth]{culi_original}}
	\hfill
	\subfloat[GPSR, SNR:~17.03~dB.]{\includegraphics[crop1,width=\mywidth]{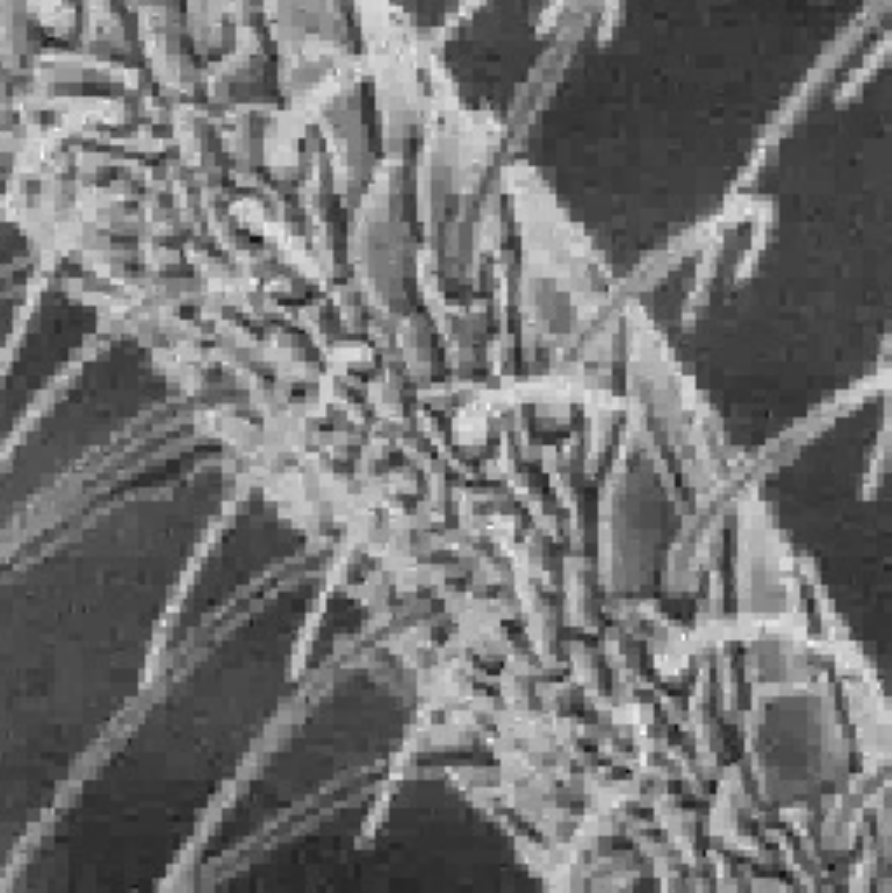}}

	\subfloat[$\bell_2$-TV, SNR:~20.80~dB.]{\includegraphics[crop1,width=\mywidth]{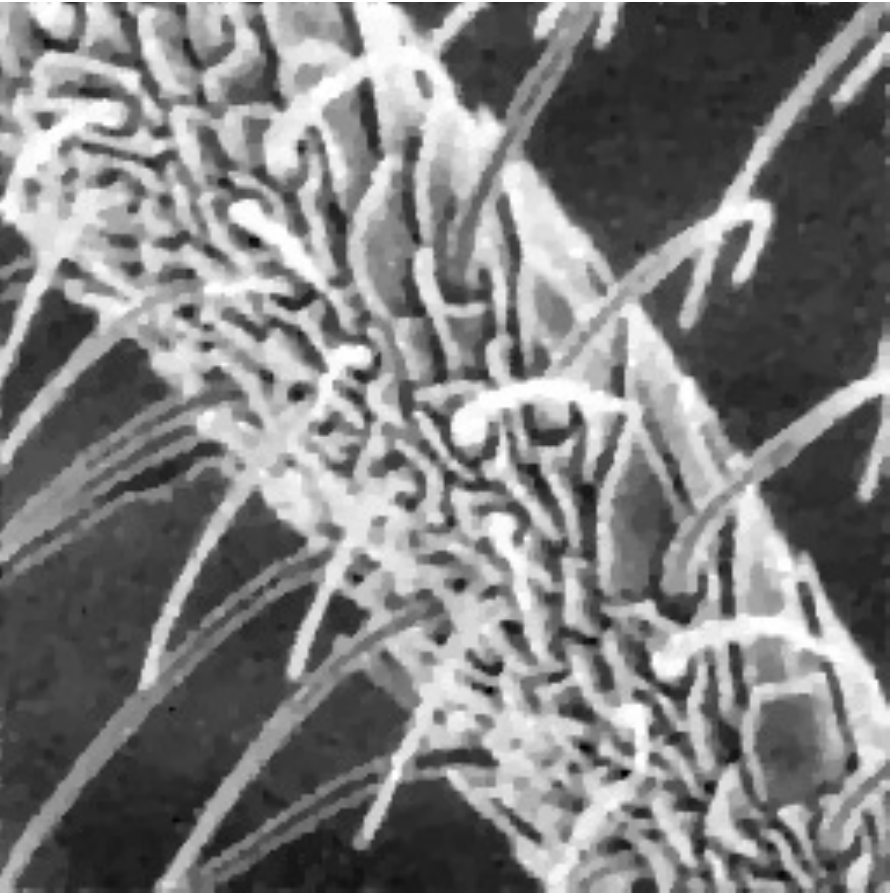}\label{fig:tv_images:rest1a}}
	\hfill
	\subfloat[$\bell_\infty$-TV, SNR:~20.25~dB.]{\includegraphics[crop1,width=\mywidth]{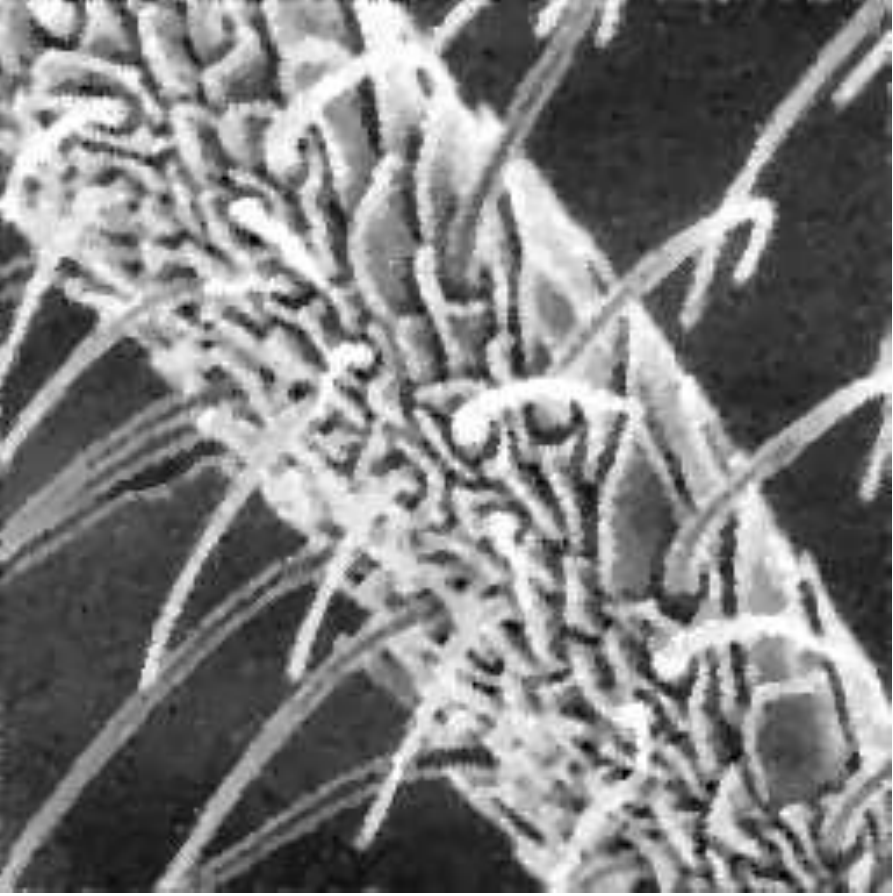}\label{fig:tv_images:rest1b}}
	\hfill	
	\subfloat[$\bell_2$-NLTV, SNR:~\textbf{22.62~dB}.]{\includegraphics[crop1,width=\mywidth]{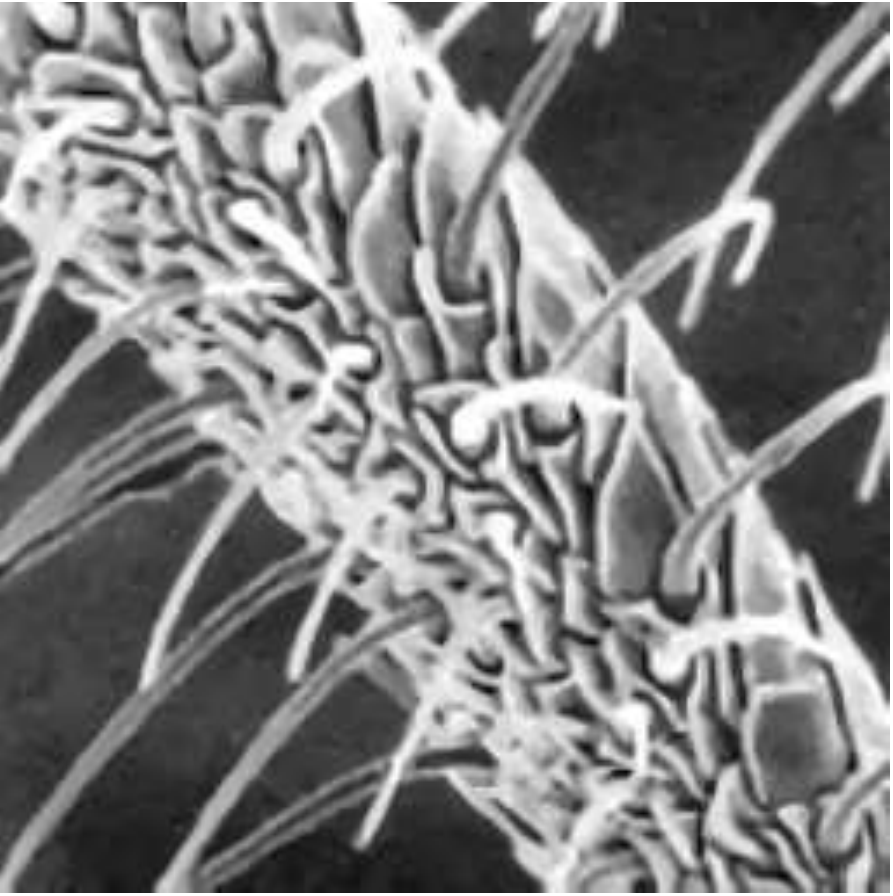}\label{fig:tv_images:rest1c}}
	\hfill
	\subfloat[$\bell_\infty$-NLTV, SNR:~22.38~dB.]{\includegraphics[crop1,width=\mywidth]{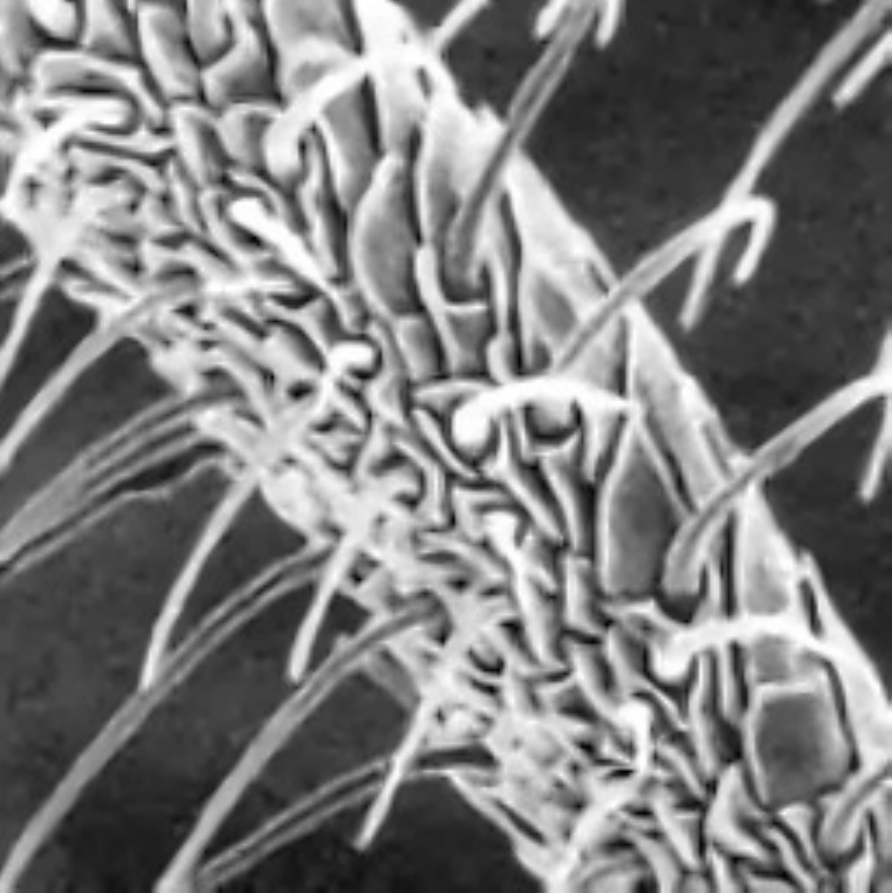}\label{fig:tv_images:rest1d}}

	\subfloat[Lena.]{\includegraphics[width=\mywidth]{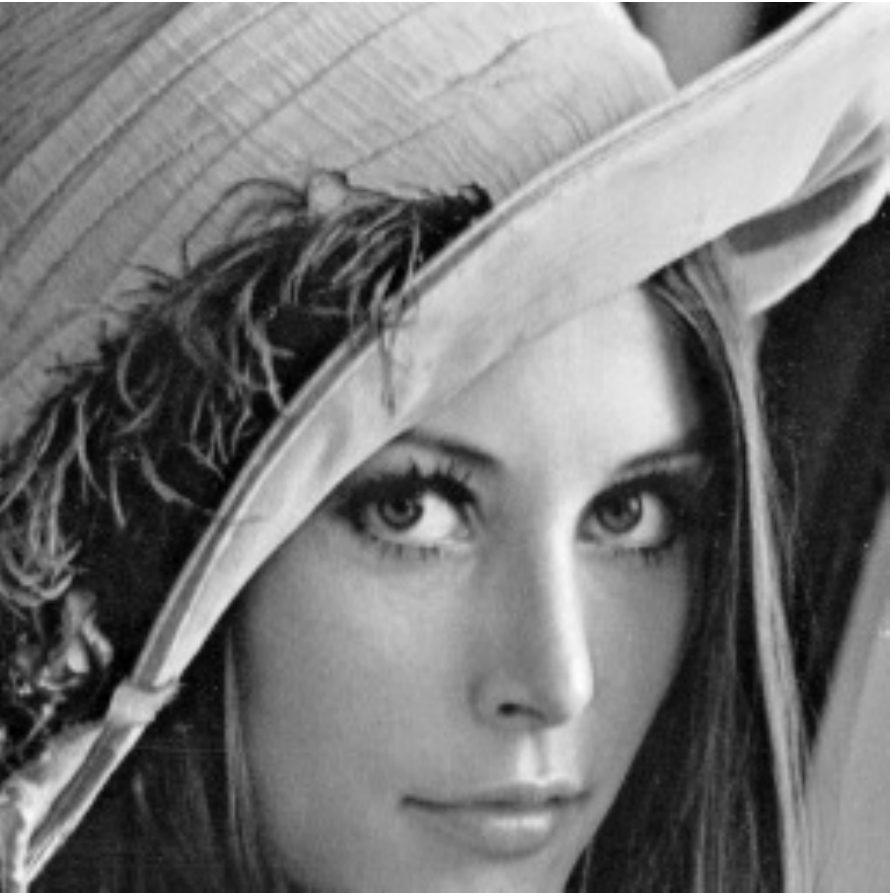}\label{fig:tv_images:orig2}}
	\hfill
	\subfloat[Degraded.]{\includegraphics[width=\mywidth]{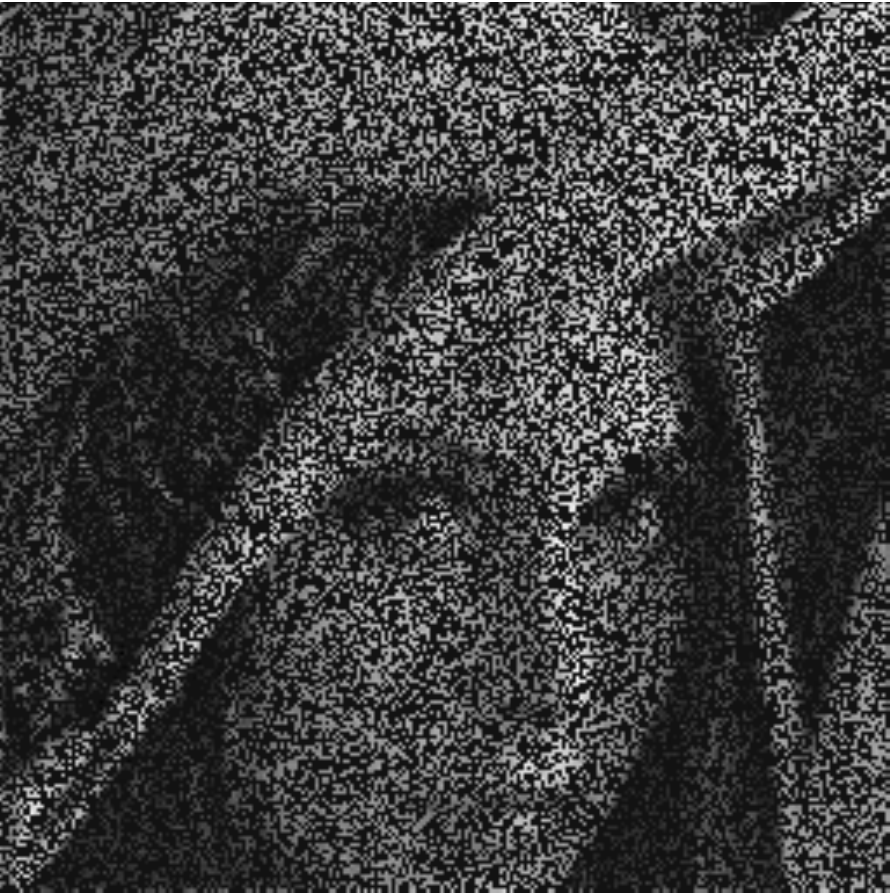}\label{fig:tv_images:degr2}}
	\hfill
	\subfloat[Zoom.]{\includegraphics[crop2,width=\mywidth]{lena_orig}}
	\hfill
	\subfloat[GPSR, SNR:~20.26~dB.]{\includegraphics[crop2,width=\mywidth]{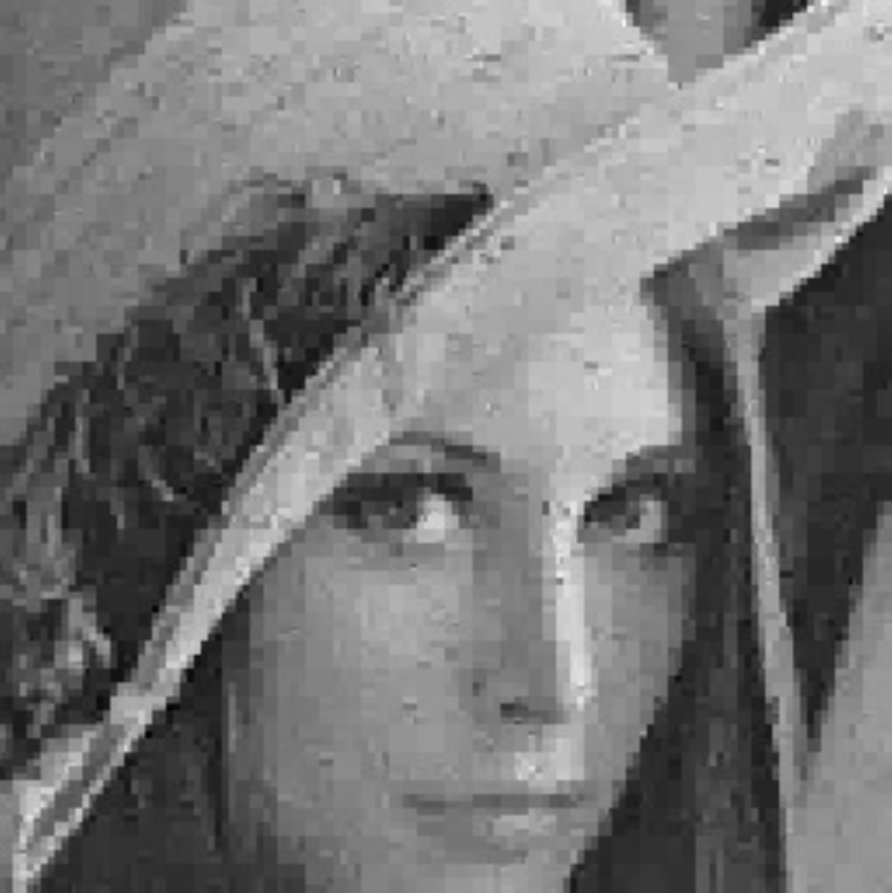}}

	\subfloat[$\bell_2$-TV, SNR:~23.18~dB.]{\includegraphics[crop2,width=\mywidth]{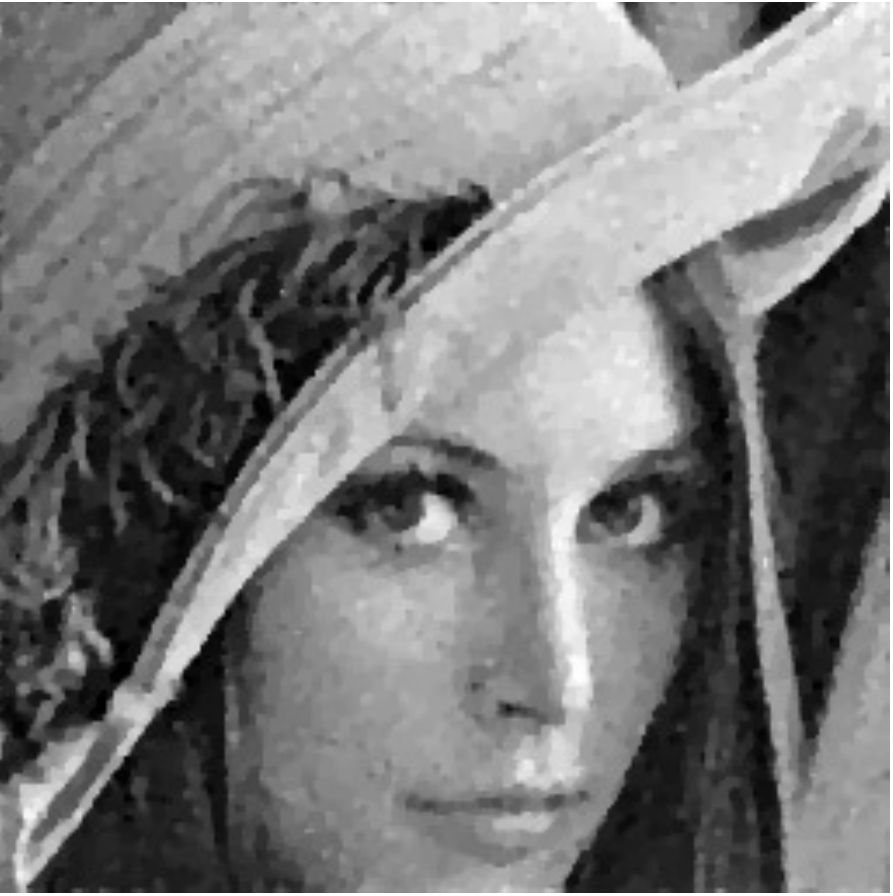}\label{fig:tv_images:rest3a}}
	\hfill
	\subfloat[$\bell_\infty$-TV, SNR:~22.77~dB.]{\includegraphics[crop2,width=\mywidth]{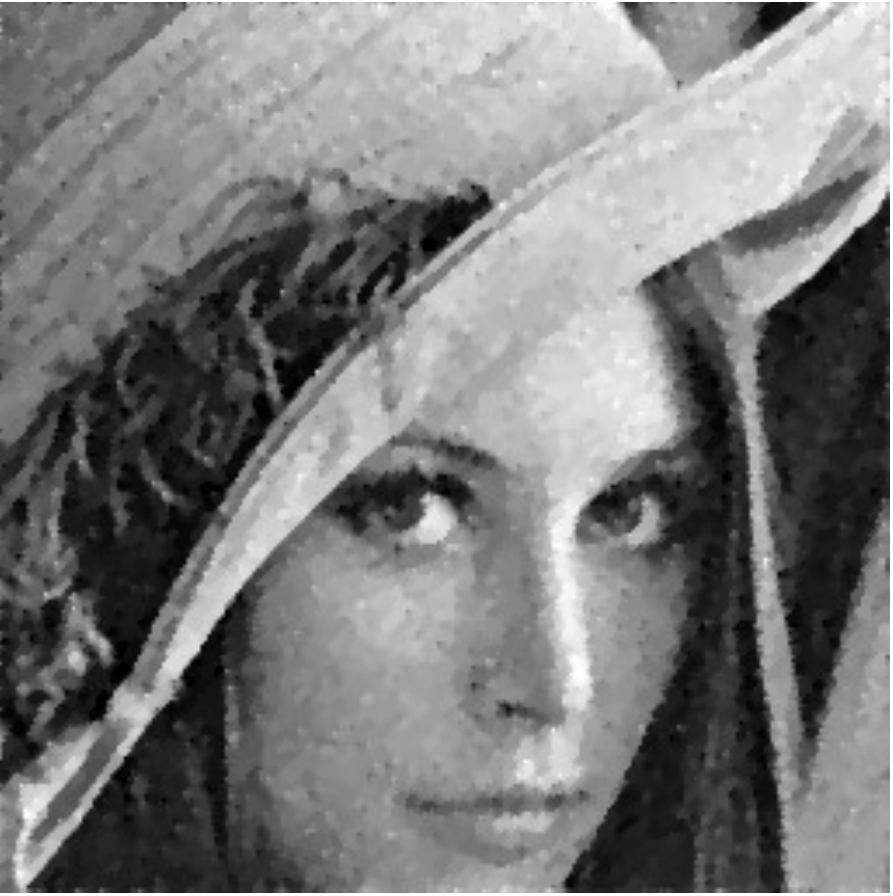}\label{fig:tv_images:rest3b}}
	\hfill
	\subfloat[$\bell_2$-NLTV, SNR:~\textbf{24.18~dB}.]{\includegraphics[crop2,width=\mywidth]{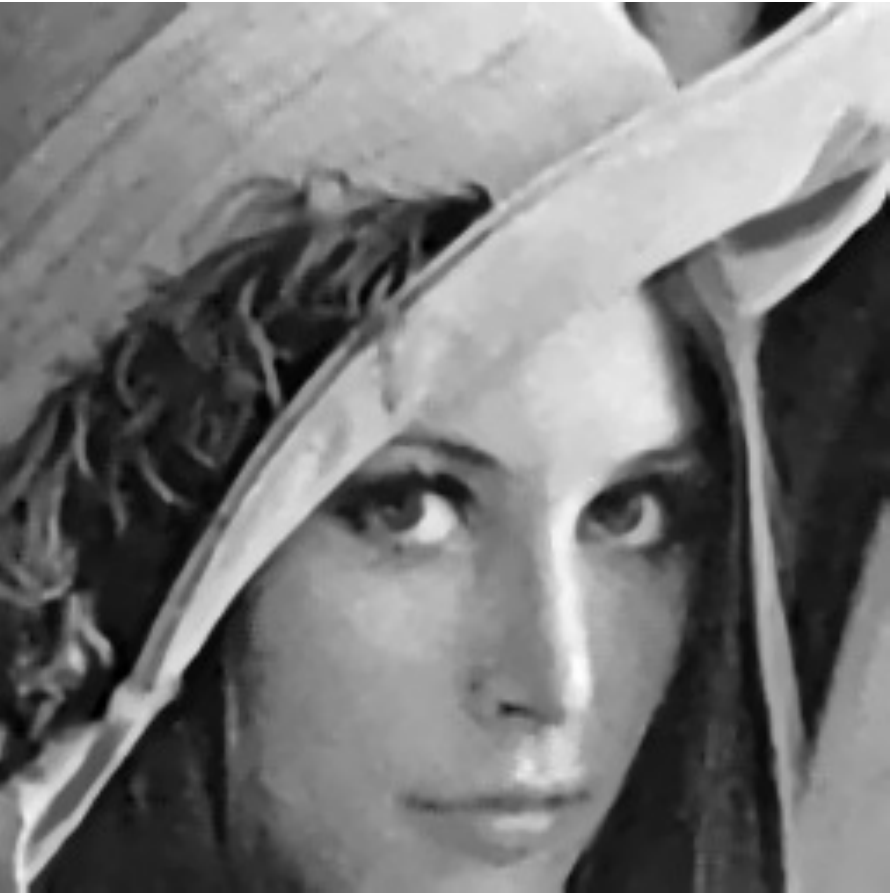}\label{fig:tv_images:rest3c}}
	\hfill
	\subfloat[$\bell_\infty$-NLTV, SNR:~24.14~dB.]{\includegraphics[crop2,width=\mywidth]{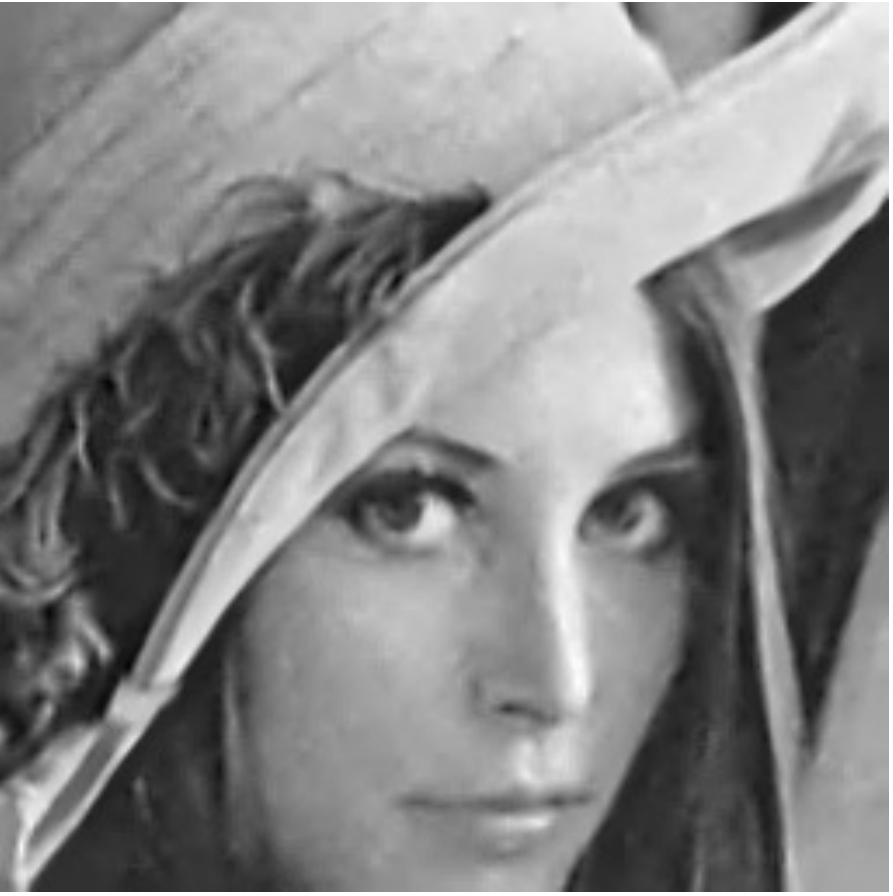}\label{fig:tv_images:rest3d}}
	
	\caption{Image restoration examples (noise parameters: blur = $3 \times 3$, $\sigma = 10$, decimation = $60\%$)}%
	\label{fig:tv_images}%
\end{figure*}
 
 \begin{figure*}%
 	\centering%
 	\subfloat[SNR comparison ($3\times3$ blur).]{\includegraphics[width=0.24\textwidth, height=2.5cm]{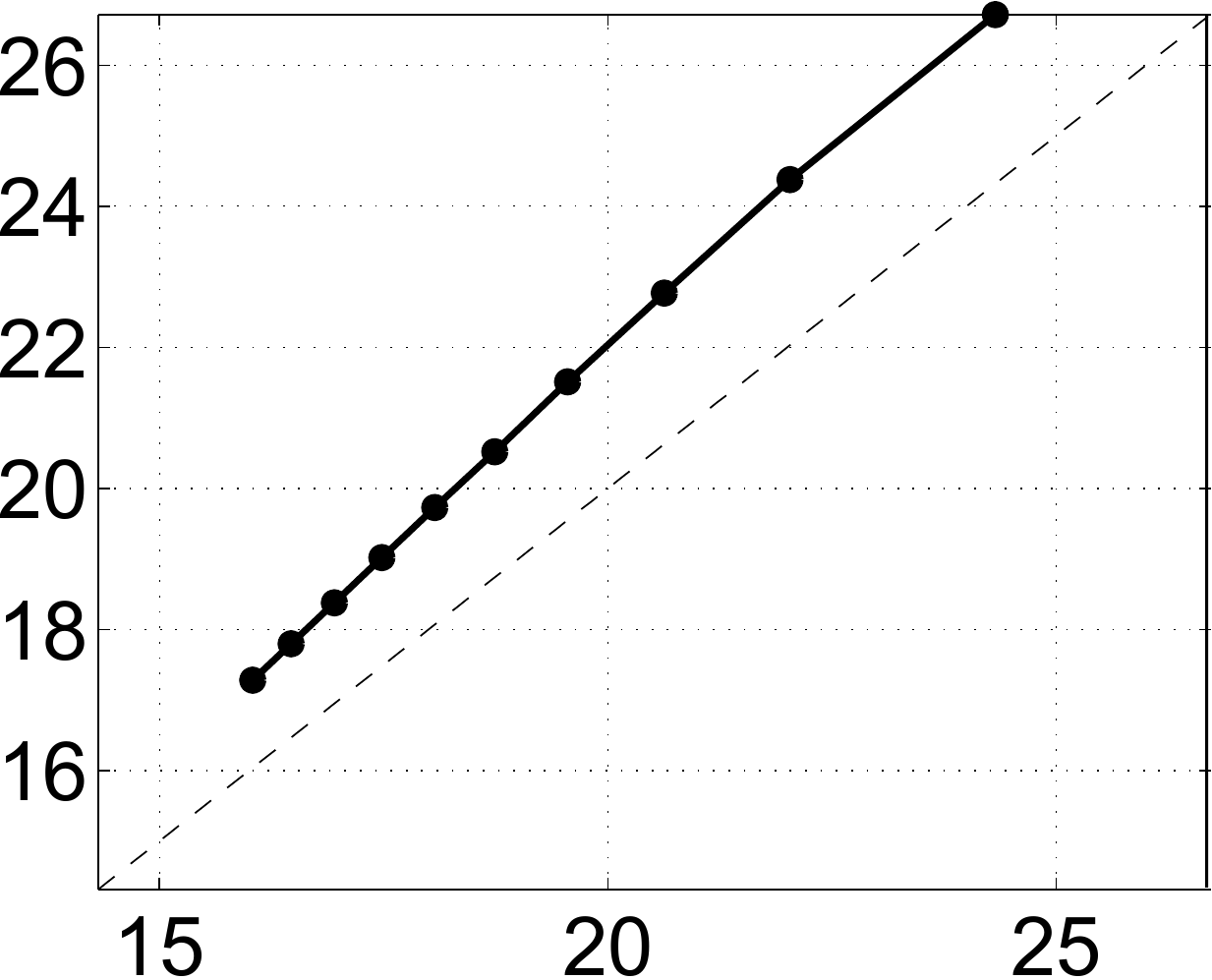}}
 	\hfill
 	\subfloat[SNR comparison ($5\times5$ blur).]{\includegraphics[width=0.24\textwidth, height=2.5cm]{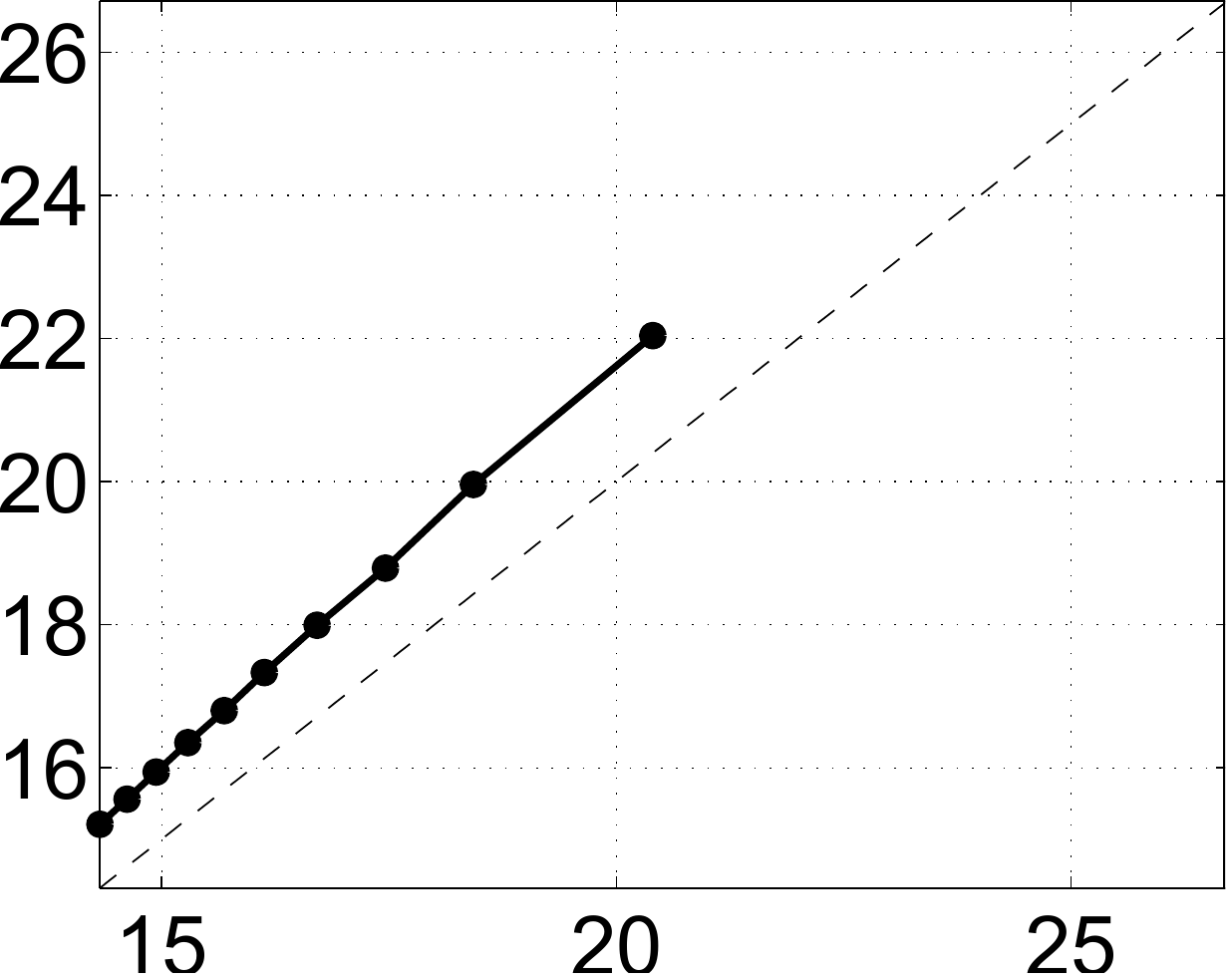}}
 	\hfill
 	\subfloat[SSIM comparison ($3\times3$ blur).]{\includegraphics[width=0.24\textwidth, height=2.5cm]{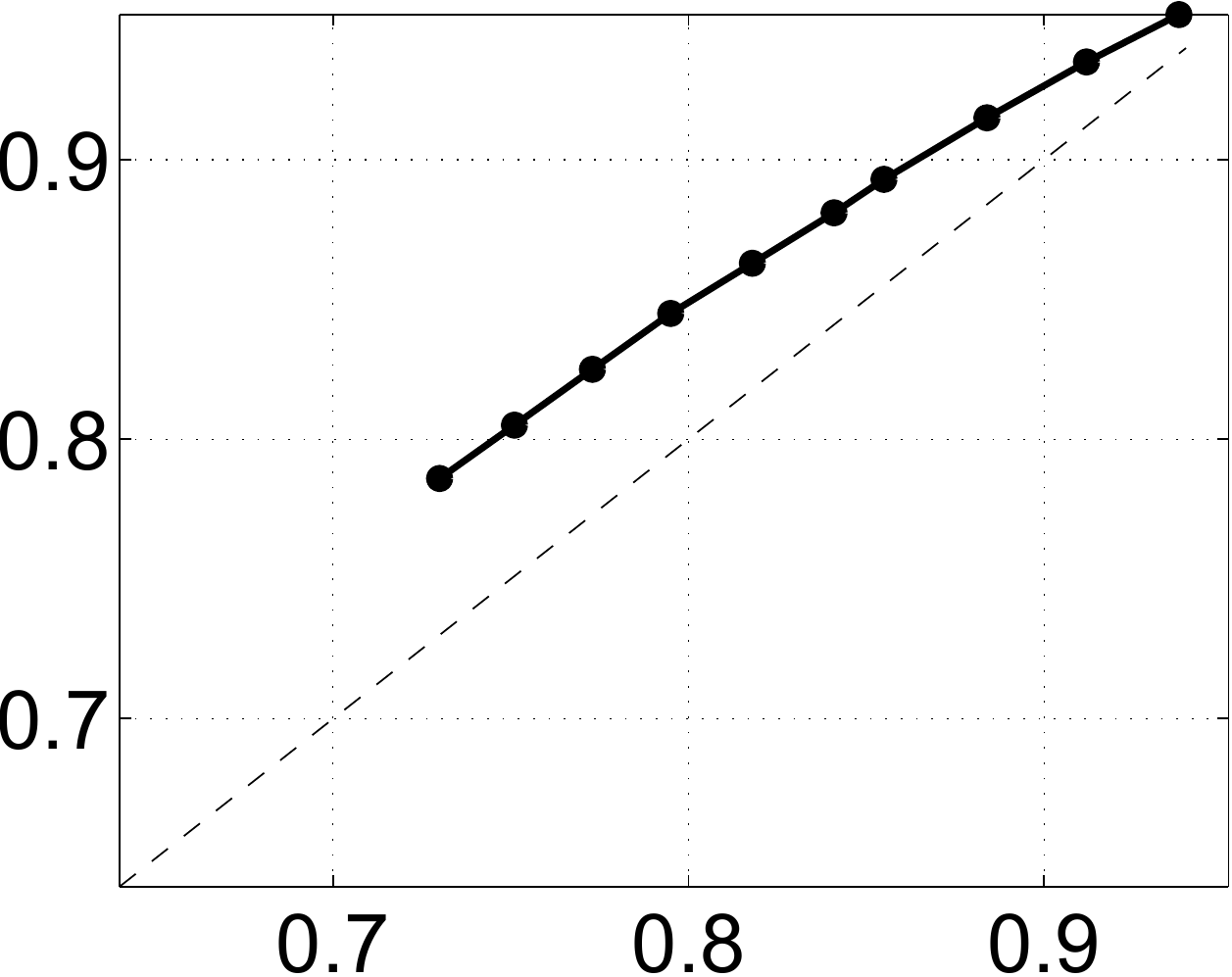}}
 	\hfill
 	\subfloat[SSIM comparison ($5\times5$ blur).]{\includegraphics[width=0.24\textwidth, height=2.5cm]{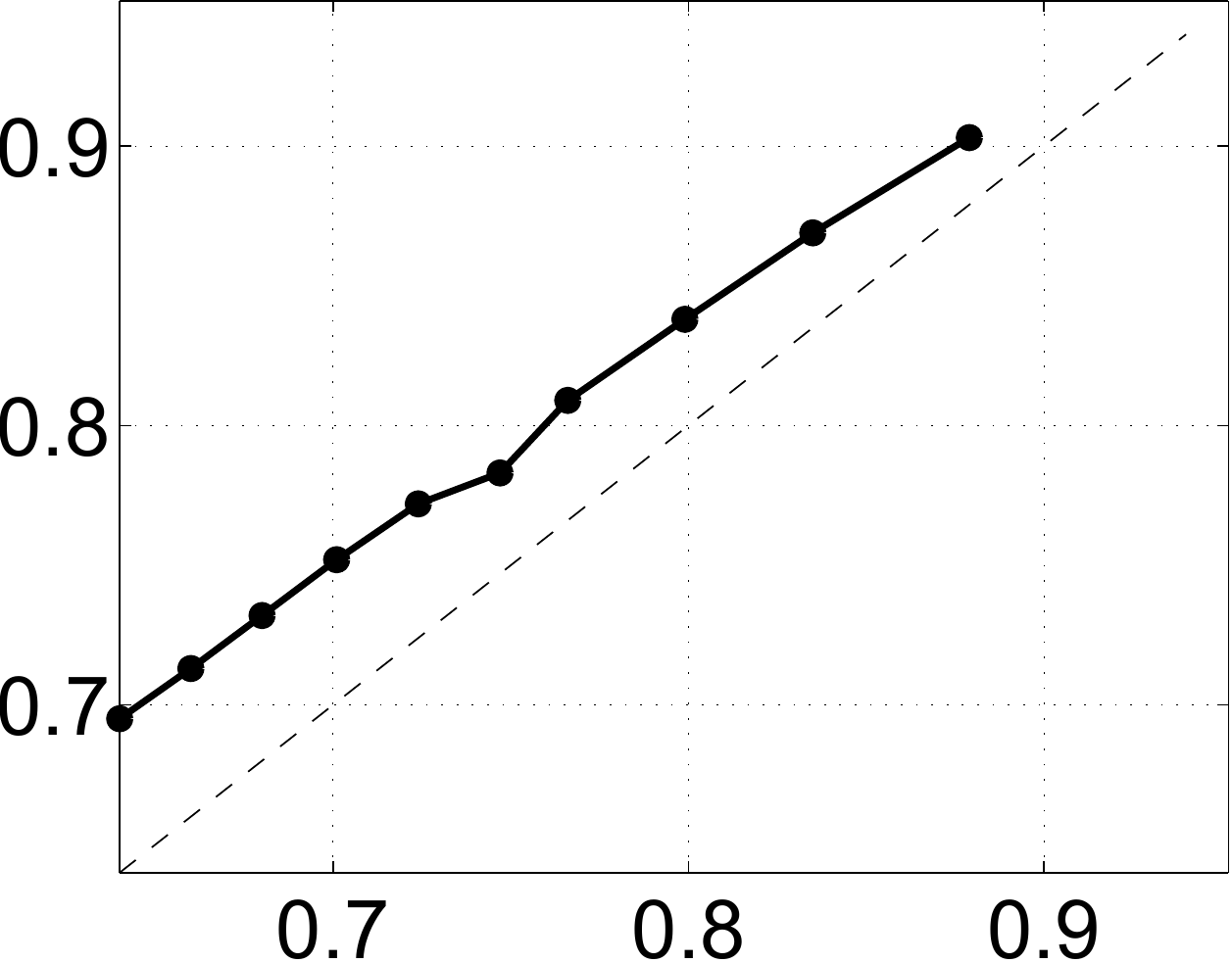}}
 	
 	\caption{SNR and SSIM values for $\bell_2$-NLTV (vertical axes) and $\bell_2$-TV (horizontal axes), for the \textit{culicoidae} image. The plots show the results obtained for $\sigma \in \{5,10,\dots,50\}$, where lower SNR or SSIM values correspond to higher $\sigma$ values. No decimation is applied in this experiment.}%
 	\label{fig:noise}%
 \end{figure*}

\subsection{Pulse shape design}

\subsubsection{Problem}
We consider a pulse shape design problem for digital communications. This problem has been previously addressed in terms of constrained optimization in 
\cite{Nobakht_R_1995_j-ieee-tcom_opt_psd,Combettes_P_1999_j-ieee-tsp_hard_cis,Combettes_PL_2008_j-ip_proximal_apdmfscvip}. 
Based on the epigraphical approach, we are able to revisit this problem by relaxing some of the involved constraints.

Five constraints arise from engineering specifications \cite{Combettes_PL_2008_j-ip_proximal_apdmfscvip}. We denote by 
$x = \big(x^{(k)}\big)_{0\leq k \leq N-1} \in \RR^N$ the pulse and by $\chi = \big(\chi^{(k)}\big)_{0\leq k \leq N-1}$ its discrete Fourier transform. The underlying sampling rate is 2560 Hz and the involved constraints are:
\begin{enumerate}
\item Bound on the modulus of $\chi$. The modulus of the Fourier transform should not exceed a prescribed bound $\gamma = 10^{-3/2}$ beyond 300 Hz. This leads to
\begin{equation}
(\forall k\in \DD_1)\qquad C_1^{(k)} = \{x \in \RR^N  \, \vert \, \vert \chi^{(k)} \vert \leq \gamma\} 
\end{equation}
where $\DD_1$ represents frequencies beyond 300 Hz.
\item Vanishing frequencies of $\chi$ at the zero frequency and at integer multiples of 50 Hz:
\begin{equation}
C_2 = \{x \in \RR^N \, \vert \, (\forall k\in \DD_2)\,\chi^{(k)} = 0\} 
\end{equation}
where $\DD_2$ denotes frequencies where the Fourier transform $\chi$ vanishes.
\item Pulse energy should not exceed a bound $\mu > 0$ (in order to avoid interference with other systems):
\begin{equation}
C_3 = \{x \in \RR^N \, \vert \, \Vert x \Vert \leq \mu\}.
\end{equation}
\item Symmetry of the pulse and its mid-point value should be equal to 1:
\begin{align}
C_4 = \{&x \in \RR^N \, \vert \, x^{(N/2)} = 1 \, \nonumber\\
&\mbox{and}\, (\forall k\in \{0,\ldots,N/2\})\,x^{(k)} = x^{(N-1-k)}\}.
\end{align}
\item Pulse duration should be 50 ms and it should have periodic zero crossings every
3.125 ms:
\begin{equation}
C_5 = \{x \in \RR^N \, \vert \, (\forall k\in \DD_3)\, x^{(k)}  = 0\} 
\end{equation}
where $\DD_3$ is the set of time indices in the zero areas.
\end{enumerate}
Some of the above constraints (e.g. the second and the last ones)  are incompatible and, in order to make the problem feasible,
we propose to replace the constraint sets $(C_1^{(k)})_{1 \le k \le N}$ with 
\begin{equation}
C_1 = \{x \in \RR^N  \, \vert \, \sum_{k\in \DD_1} d^\beta_{C_1^{(k)}}(\chi^{(k)} ) \le \varepsilon\} 
\end{equation}
where $\varepsilon>0$ and $\beta\in [1,+\infty [$.

\subsubsection{Algorithmic solution}
The resulting optimization problem reads
\begin{equation}
\underset{x\in \RR^N}{\mbox{minimize}}  \; \Vert x \Vert^2 +\sum_{s=1}^5 \iota_{C_s}(x).
\label{eq:cdf}
\end{equation}
A squared $\ell_2$-norm has been included in the criterion in order to ensure the uniqueness of the solution. Algorithms usually employed for solving such a kind of problem \cite{Combettes_PL_2008_j-ip_proximal_apdmfscvip} require
closed form expressions of the projection associated with each involved constraint.
The projections onto $C_2$, $C_4$, and $C_5$ are standard projections onto vector subspaces. The projection onto $C_3$ is the projection onto an $\ell_2$-ball which also has a closed form. The difficulty stems from the computation of the projection onto the convex set $C_1$. We propose to split it into two constraints as proposed in Section~\ref{sec:spl}. More specifically, Problem \eqref{eq:cdf} can be equivalently written as
\begin{equation}\label{eq:cdf_epi}
\minimize{(x,\zeta_1) \in \RR^N\times V_1} \; \Vert x \Vert^2 + \iota_{E_1}(x,\zeta_1)+\sum_{s=2}^5 \iota_{C_s}(x)
\end{equation}
where  
\begin{equation}\label{e:defV2}
V_1 = \menge{\zeta\in \RR^{N}}{{\sf 1}_{\DD_1}^\top\zeta \leq \varepsilon}
\end{equation}
and the closed convex set $E$ is
\begin{equation}\label{e:defE2}
E_1 = \big\{ {(x,\zeta)\in \RR^N\times \RR^N}~\big|~{(\forall k \in \DD_1)} \;d^\beta_{C_1^{(k)}}(x) \leq \zeta^{(k)}
\big\}.
\end{equation}
The projection onto $V_1$ is well-known \cite{Hiriart_Urruty_1996_book_convex_amaIf}, while the projection onto $E_1$ follows from Proposition \ref{ex:epidistl}.

\subsubsection{Numerical results}
Several experiments are performed in order to compare state-of-the-art solutions with the proposed constrained formulation.
The results are summarized in Figs~\ref{fi:res_fil_des_1} and \ref{fi:res_fil_des_3}. 
Fig.~\ref{fi:res_fil_des_1} presents state-of-the-art results (from \cite{Combettes_PL_2008_j-ip_proximal_apdmfscvip}). Fig.~\ref{fi:res_fil_des_2} shows the results obtained with the proposed solution for $\beta=1$ and different values of $\varepsilon$ leading to admissible solutions while Fig.~\ref{fi:res_fil_des_3} presents admissible solutions for $\beta=2$   and different values of $\varepsilon$. Note that for large values of $\varepsilon$, the solutions for $\beta=1$ or $\beta=2$ converge to a solution of the unconstrained (without imposing $C_1$) problem (cf. Fig~\ref{fi:res_fil_des_1}-right). For $\beta=2$, it is also interesting to experimentally observe that the estimated pulse for the smallest value of $\varepsilon$ leading an admissible solution (cf. Fig~\ref{fi:res_fil_des_3}-left) is similar to the solution proposed in \cite{Combettes_PL_2008_j-ip_proximal_apdmfscvip} (cf. Fig.~\ref{fi:res_fil_des_1} - middle). 

As illustrated by these experiments, the proposed approach allows us to gain more design flexibility at the expense of 
a small additional computational cost.

\begin{figure*}
\centering
 \begin{tabular}{ccc}
\includegraphics[width=4.8cm]{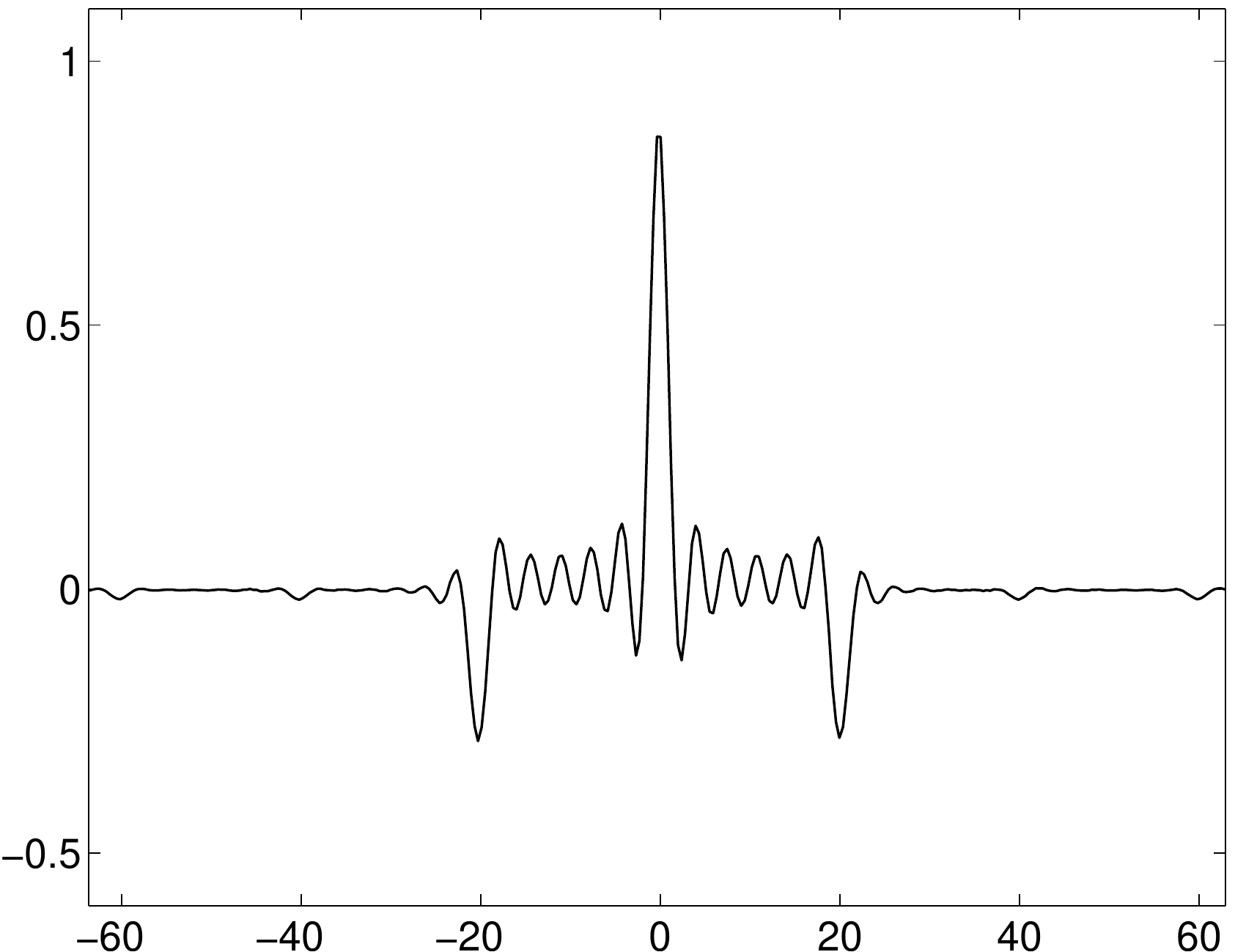}&\includegraphics[width=4.8cm]{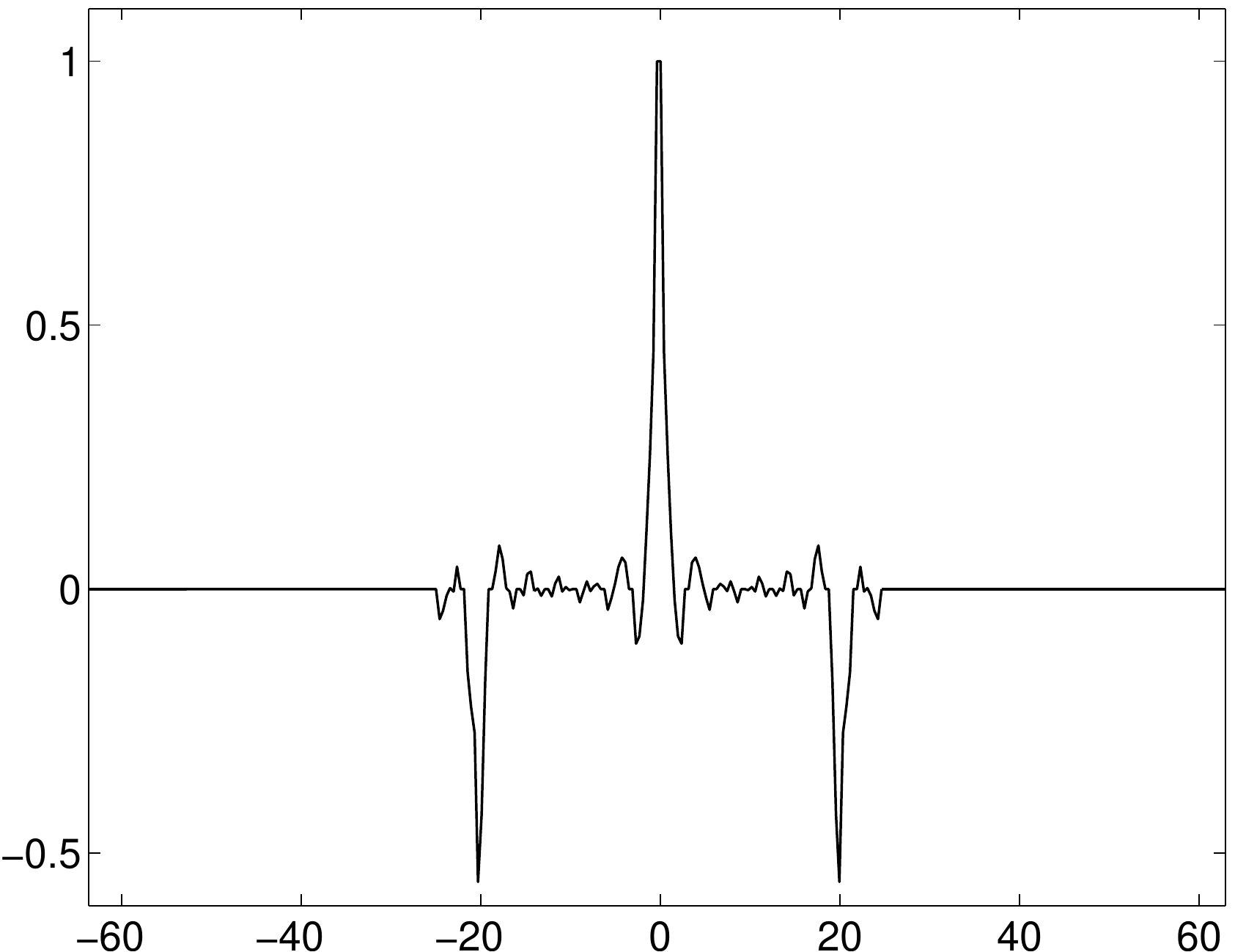}&\includegraphics[width=4.8cm]{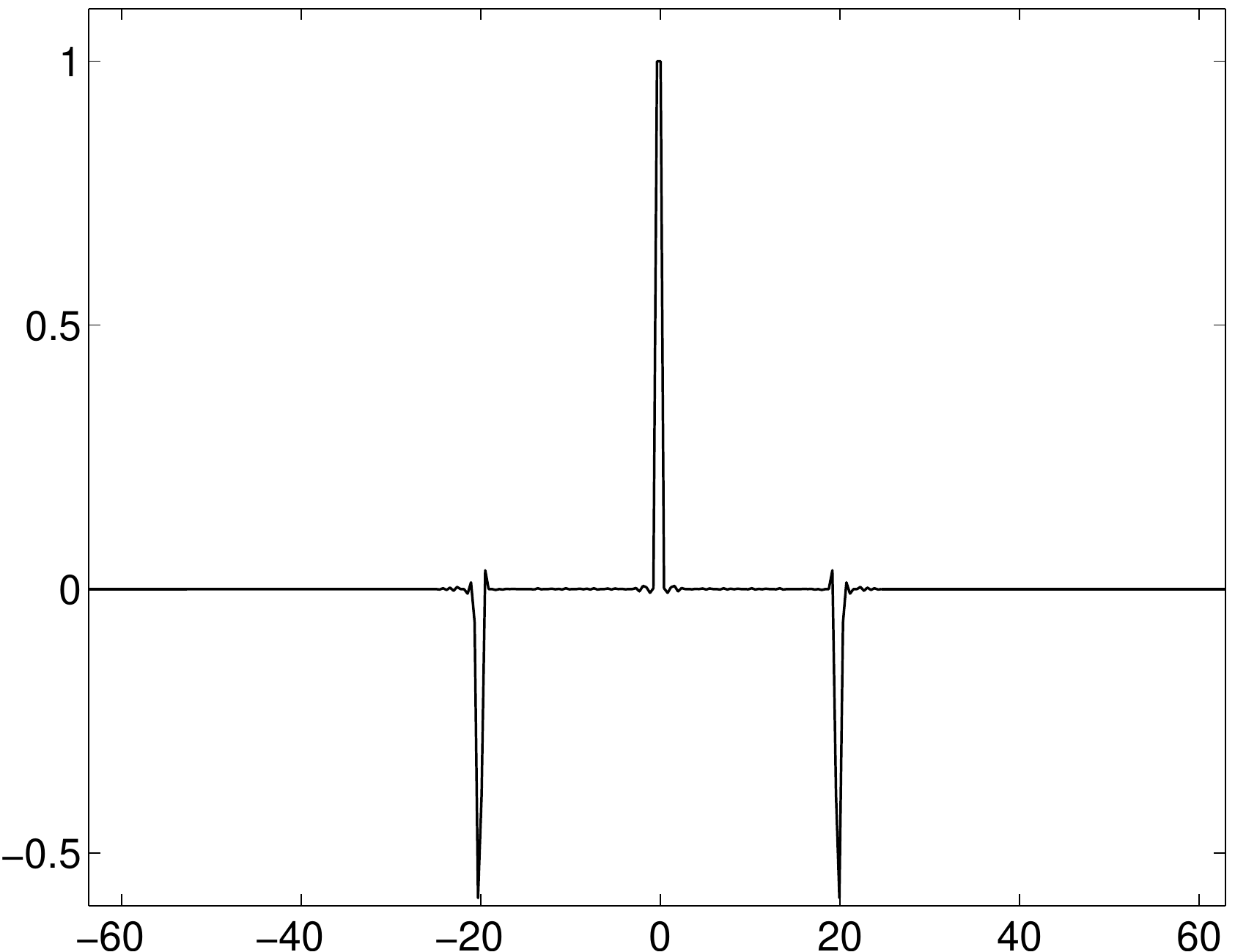}\\
\includegraphics[width=4.8cm]{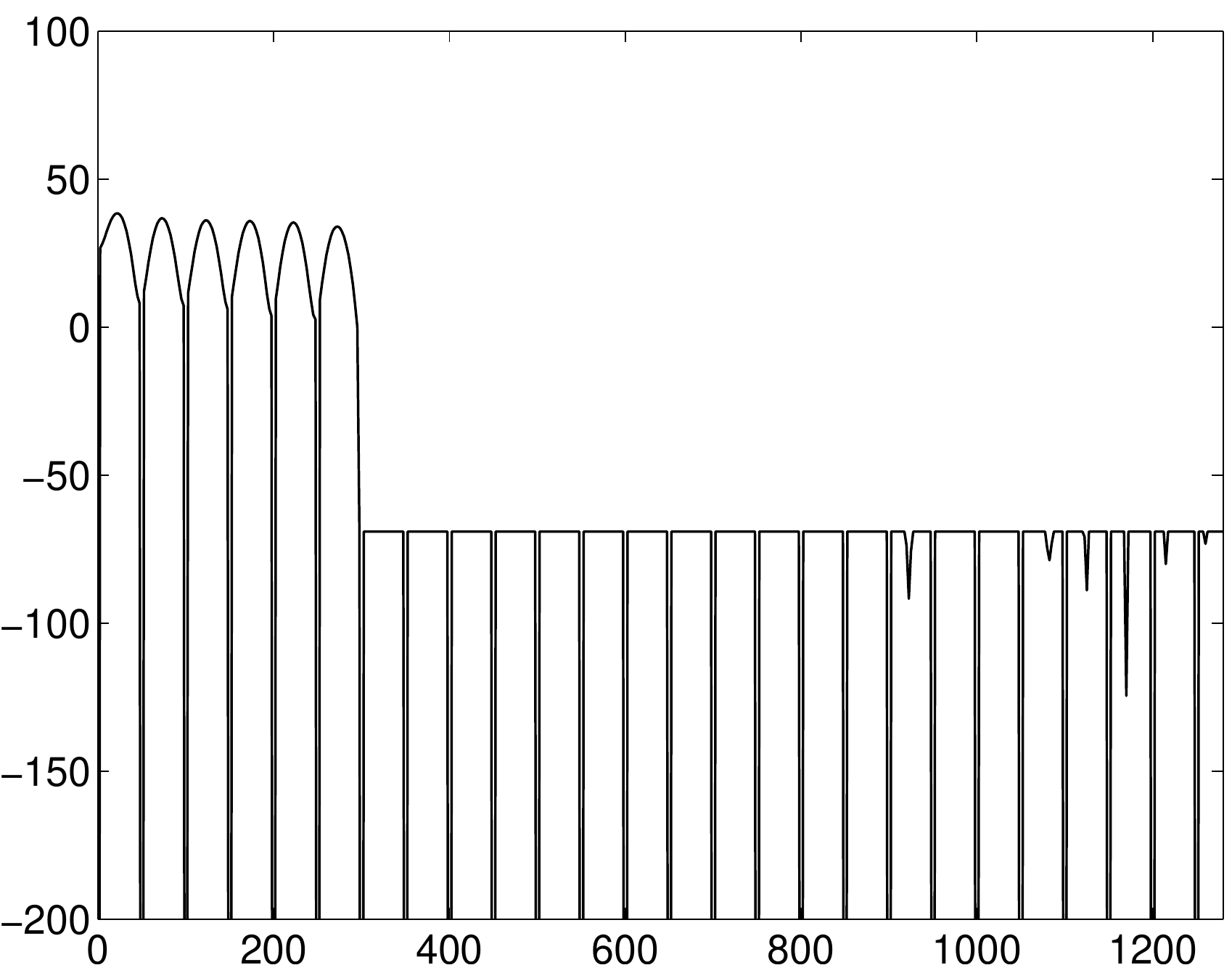} &\includegraphics[width=4.8cm]{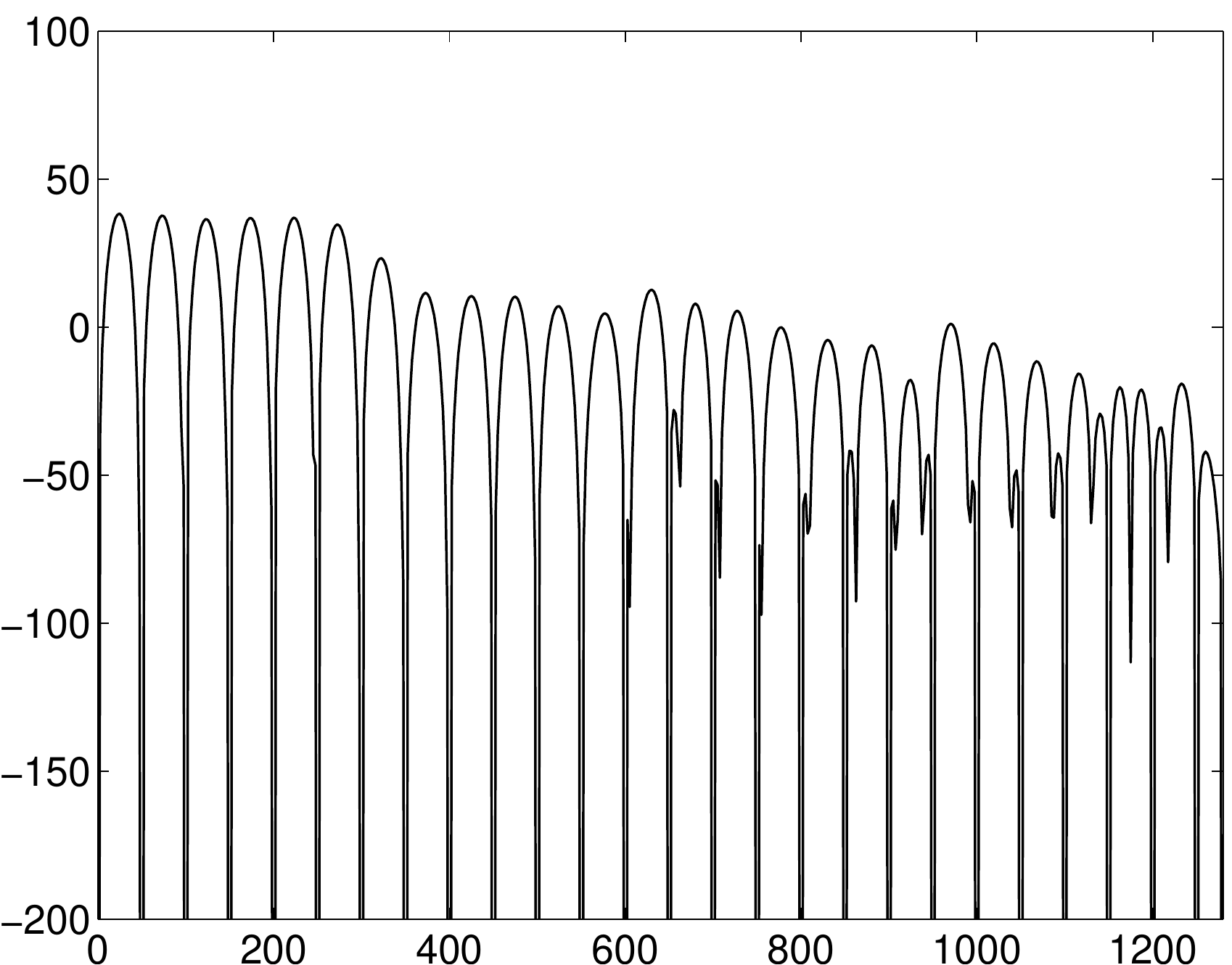} &\includegraphics[width=4.8cm]{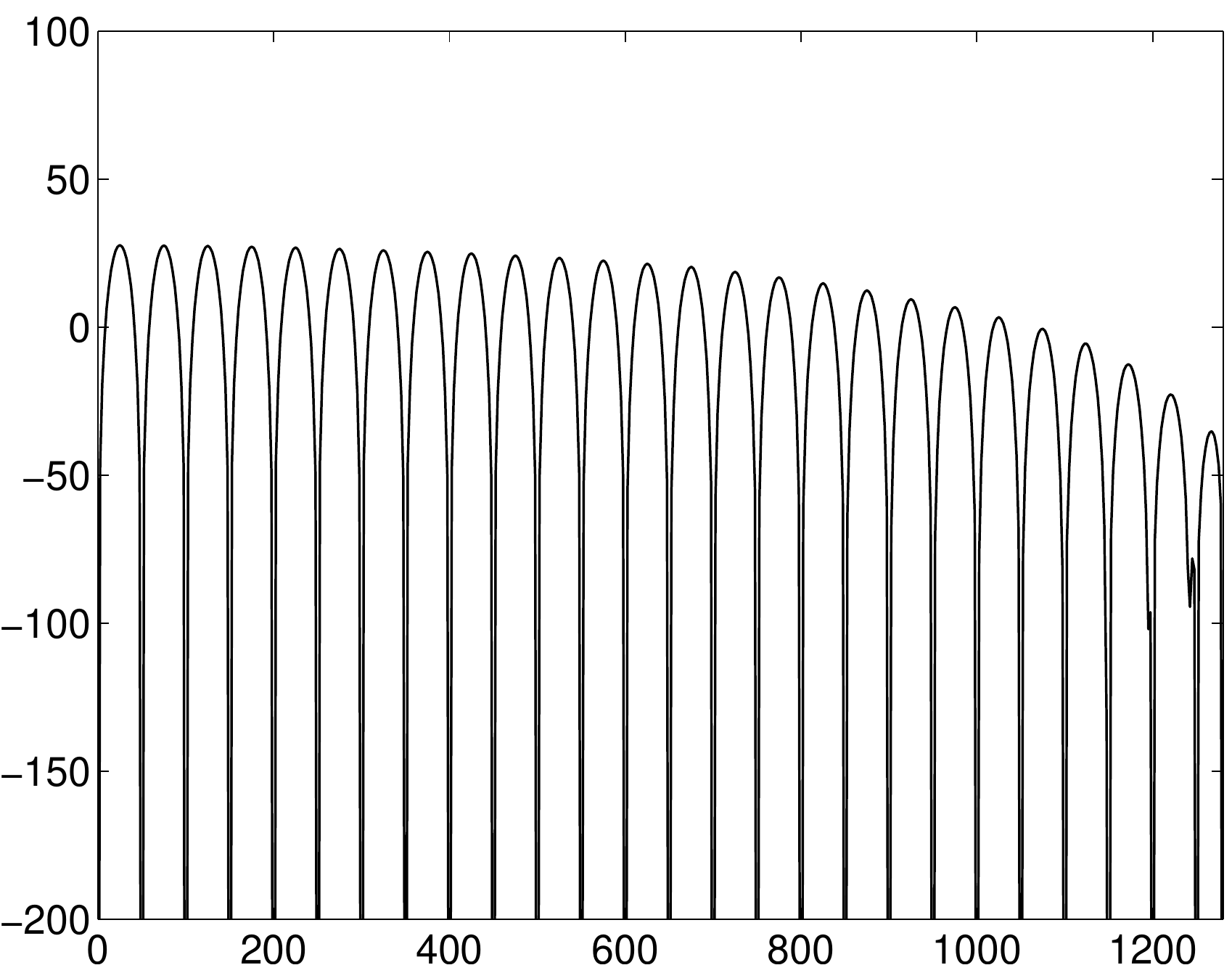} \\
 \end{tabular}
\caption{State-of-the-art results. 
First row: pulse $x$, second row: Fourier transform $\chi$ of the pulse. 
The results in the first column are obtained by minimizing $d_{C_4}(x) + d_{C_5}(x)$ subject to $x \in \Big(\bigcap_{k\in \DD_1}C_1^{k}\Big)\cap (C_2 \cap C_3)$, the results in the second one are obtained by minimizing $\sum_{k\in \DD_1 }d_{C^k_1}^2(x)$ subject to $x \in C_2 \cap C_3 \cap C_4 \cap C_5$, and the last one presents a solution in $C_2 \cap C_3 \cap C_4 \cap C_5$. \label{fi:res_fil_des_1}}
\end{figure*}

\begin{figure*}
\centering
 \begin{tabular}{ccc}
\includegraphics[width=5cm]{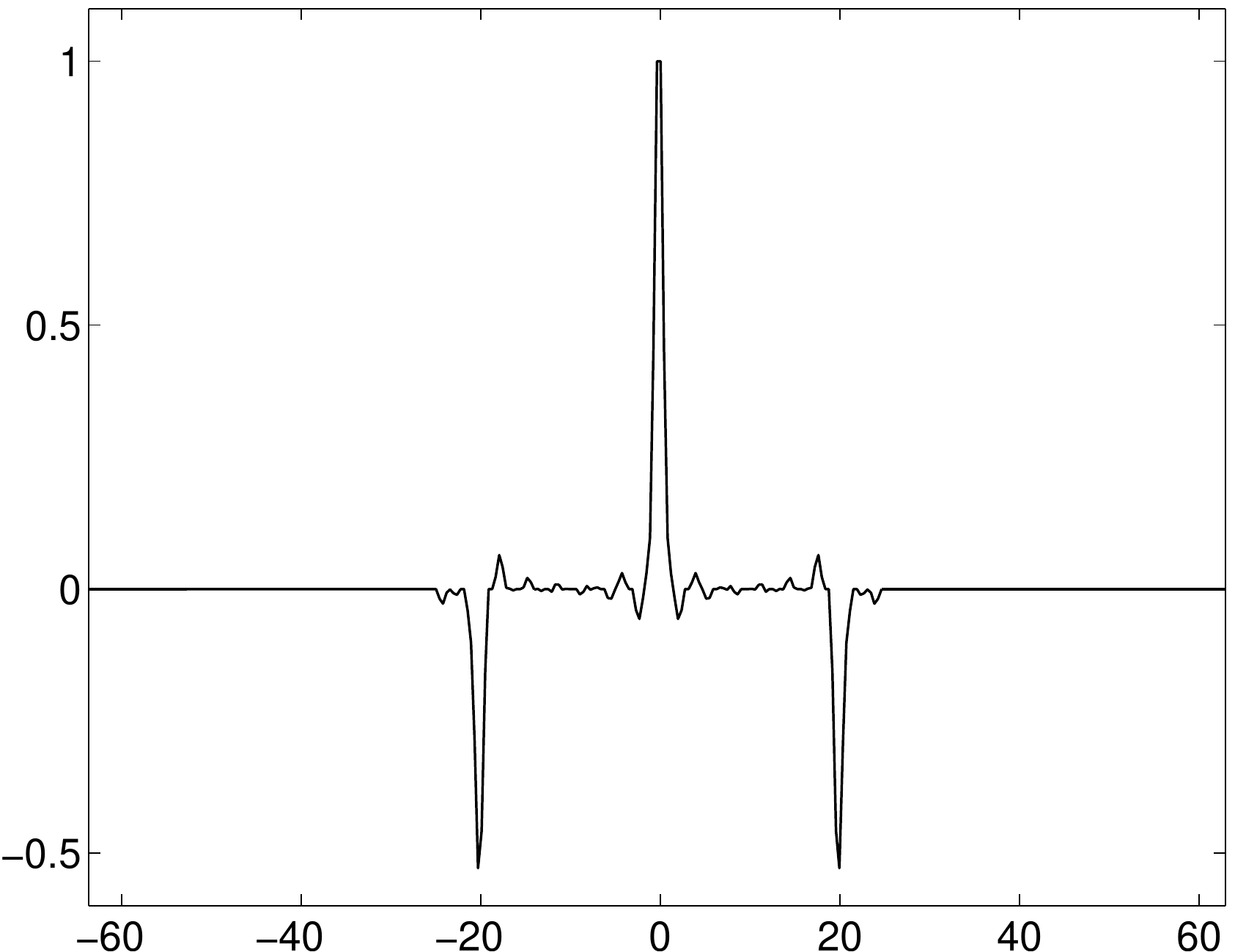}&\includegraphics[width=5cm]{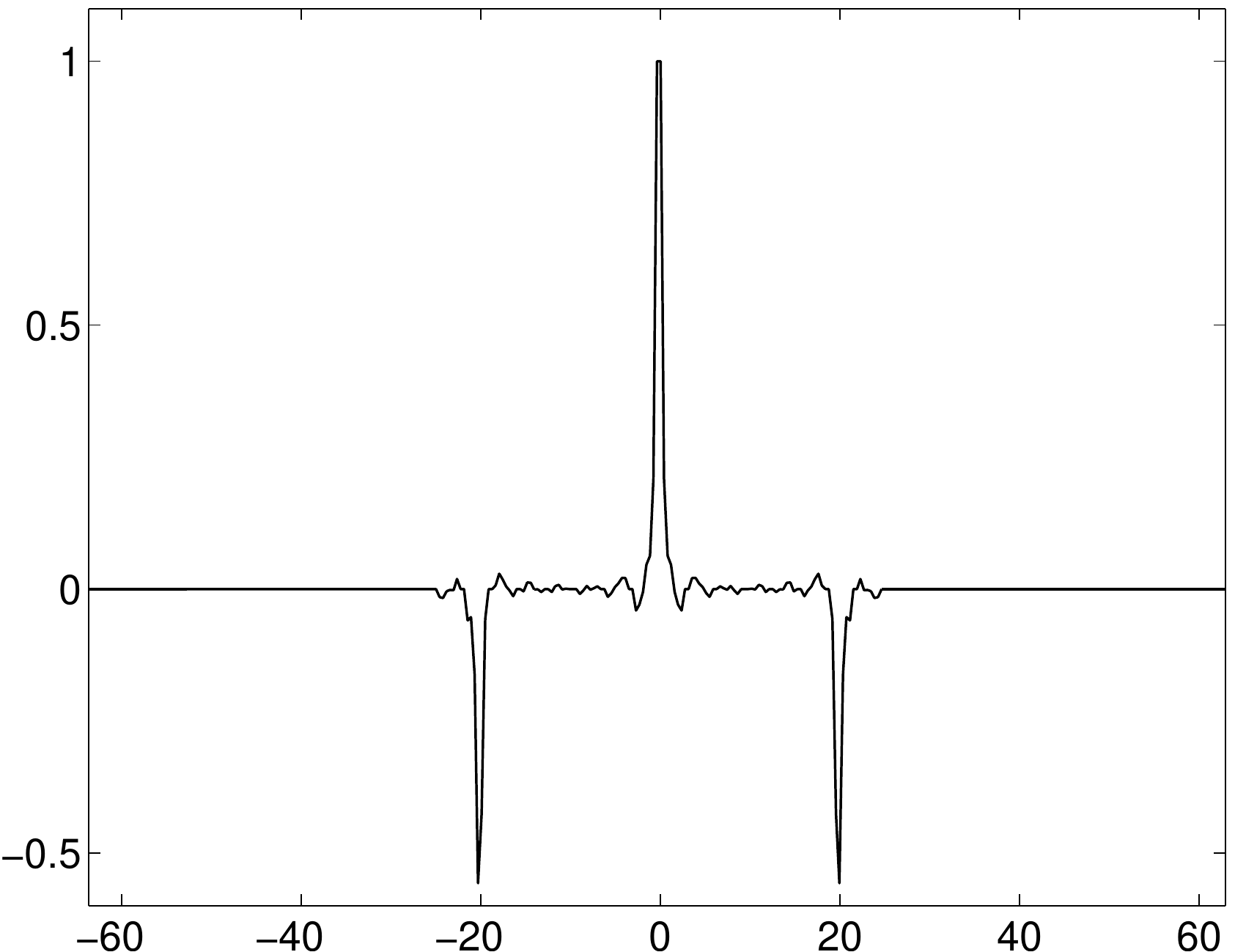}&\includegraphics[width=5cm]{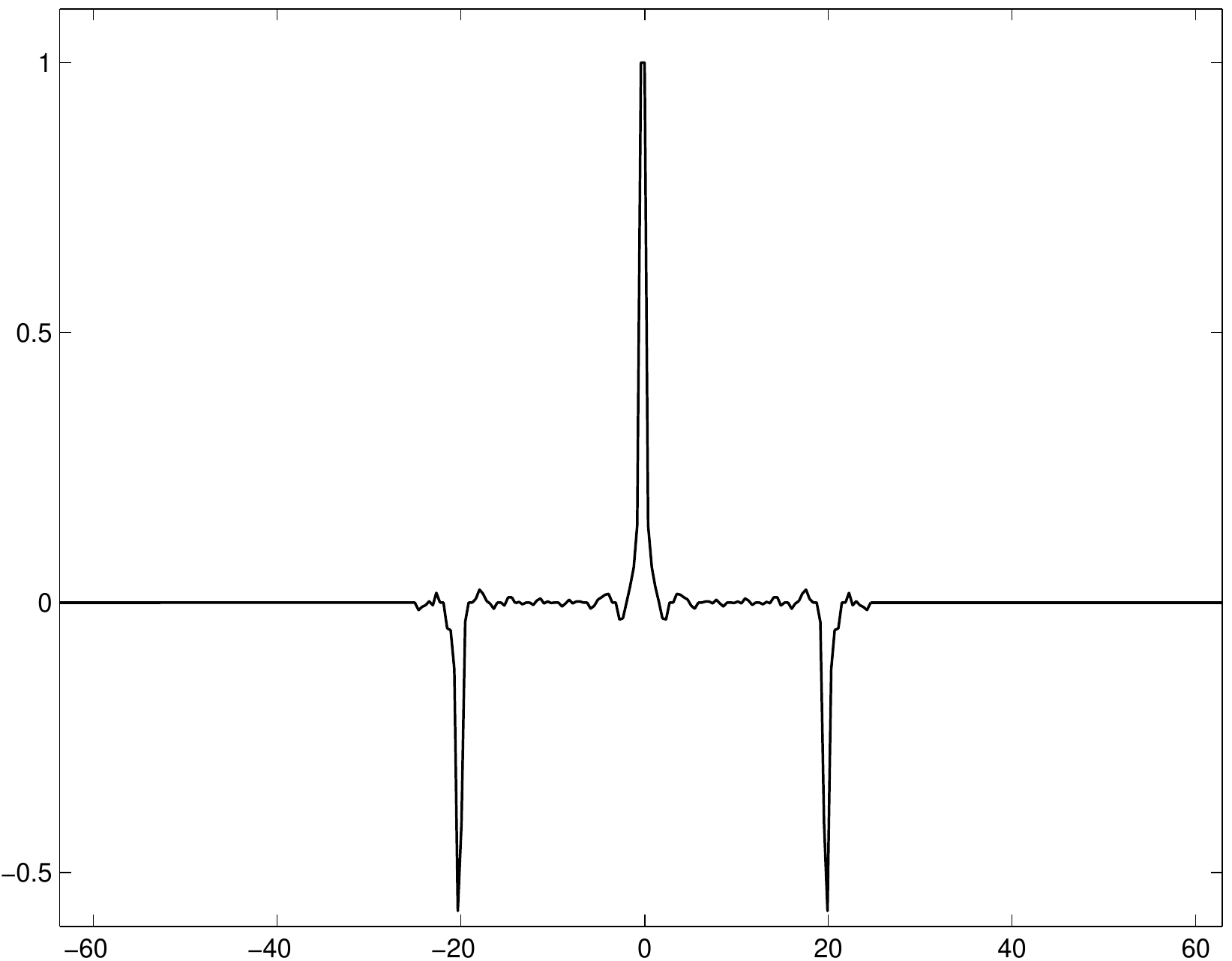}\\
\includegraphics[width=5cm]{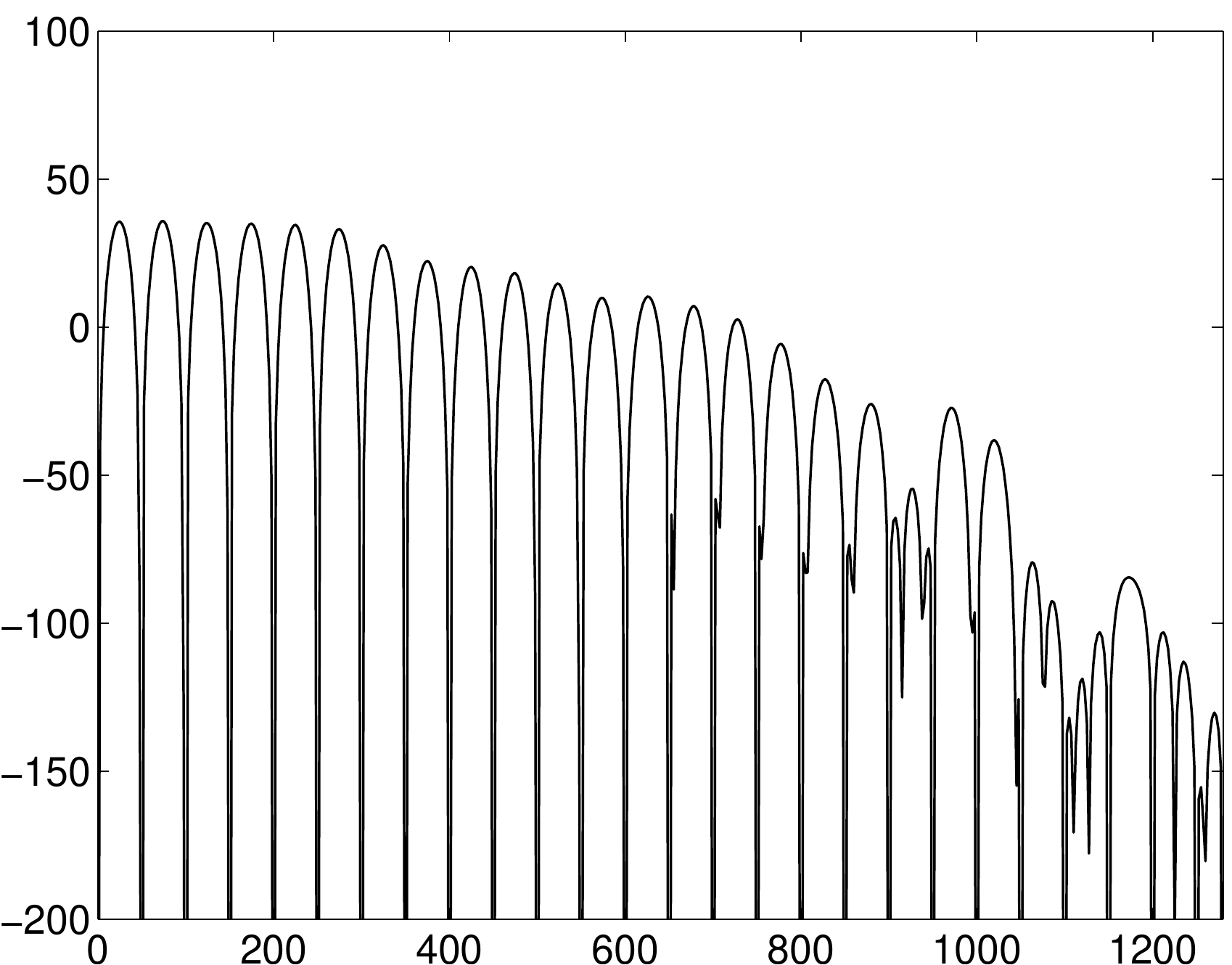} &\includegraphics[width=5cm]{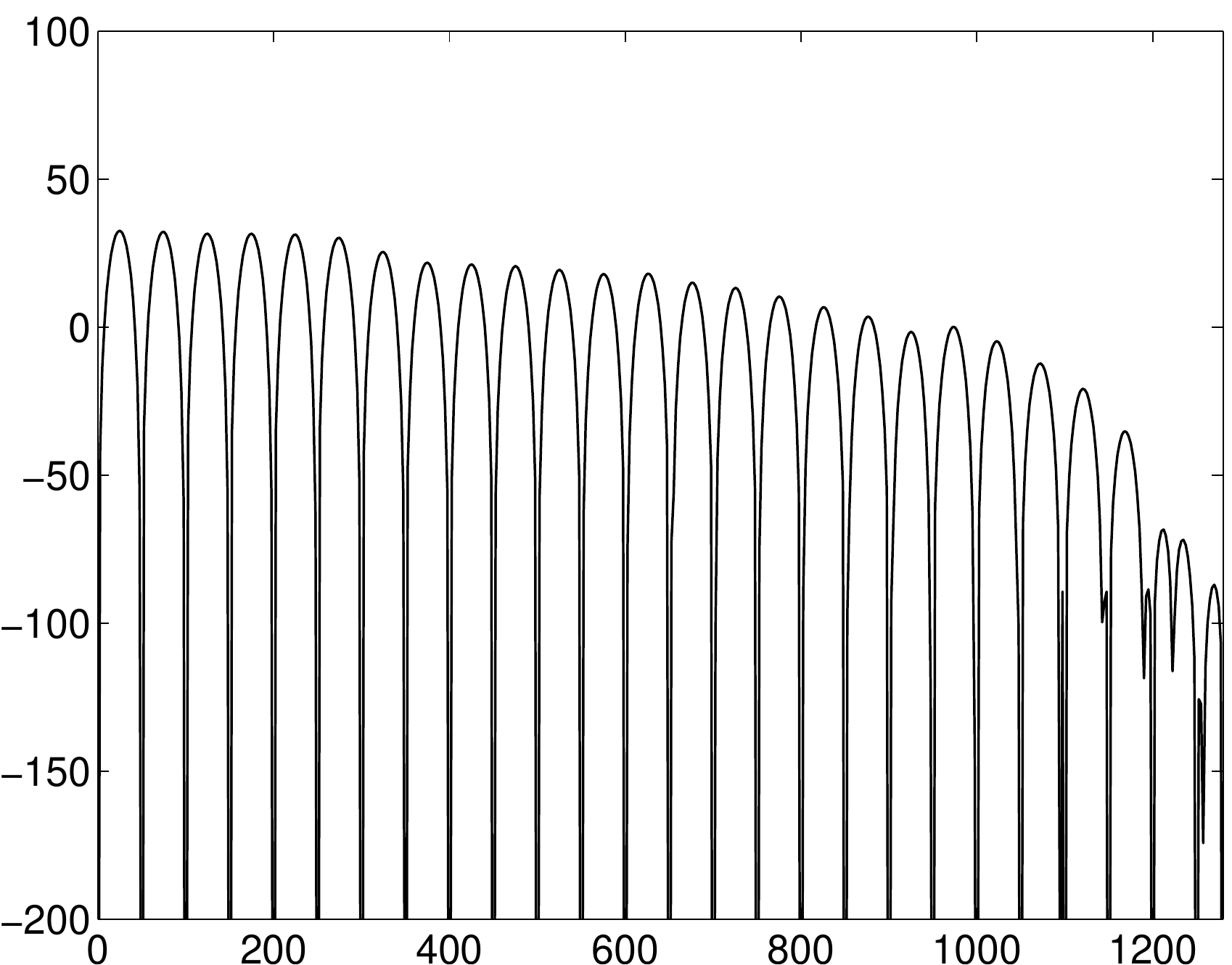} &\includegraphics[width=5cm]{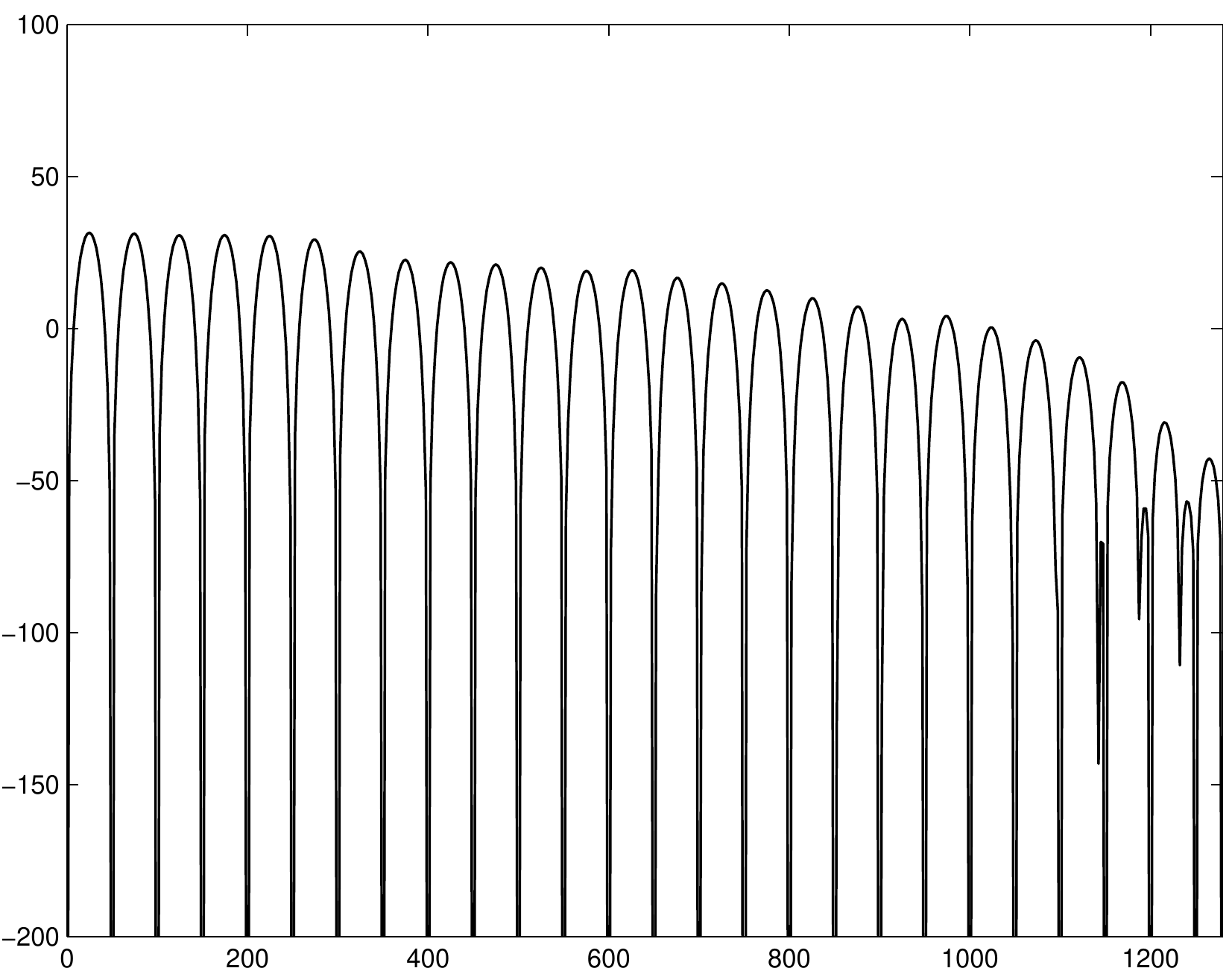}\\
 \end{tabular}
  \caption{Results obtained from \eqref{eq:cdf_epi} for $\beta=1$ and
  $\varepsilon =800 $ (first column), $\varepsilon = 1000$ (second
  column),  and $\varepsilon = 2000$ (third column). First row: pulse $x$,
  second row: Fourier transform $\chi$ of the pulse.
  \label{fi:res_fil_des_2}}
\end{figure*}
%

\begin{figure*}
\centering
 \begin{tabular}{ccc}
\includegraphics[width=5cm]{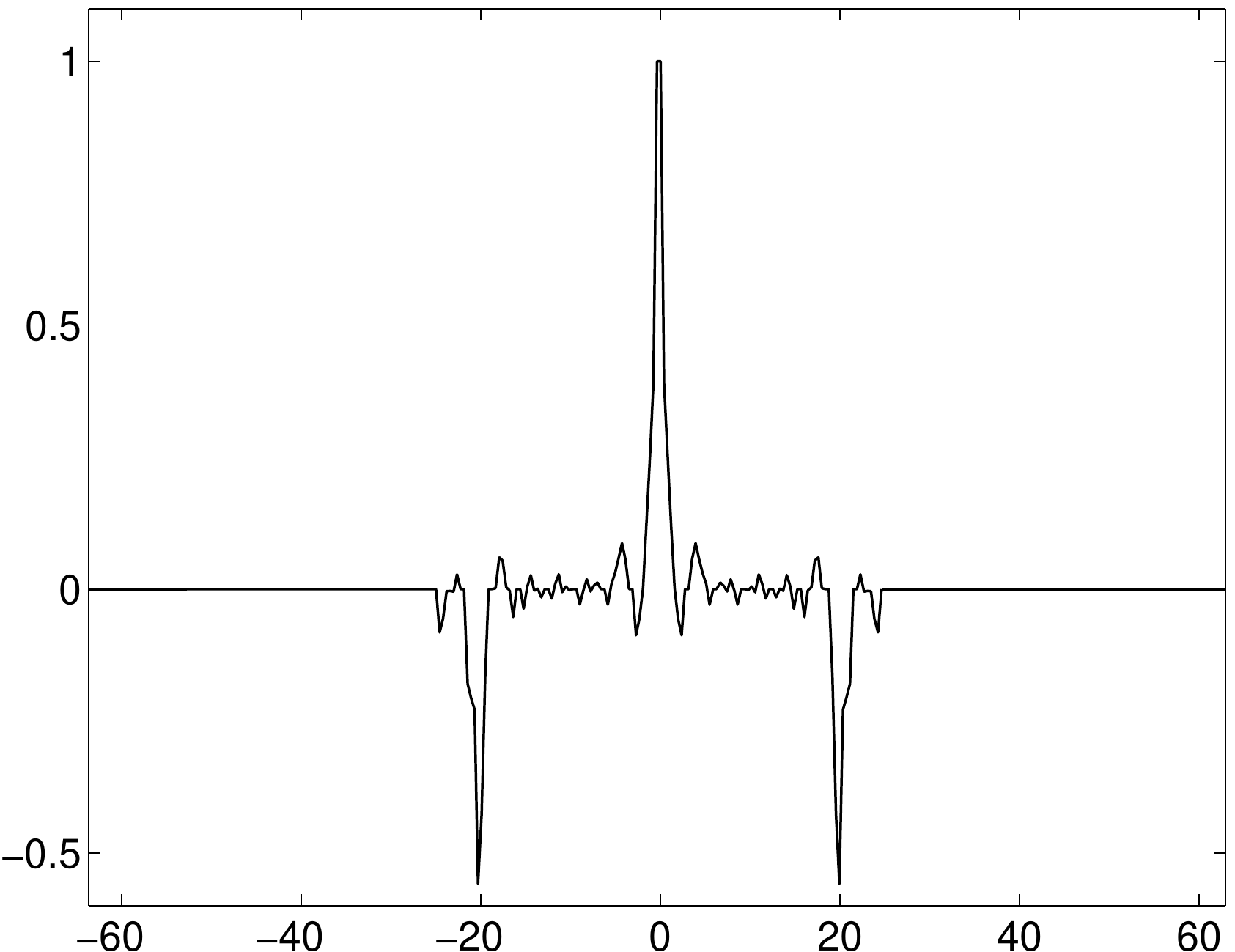}&\includegraphics[width=5cm]{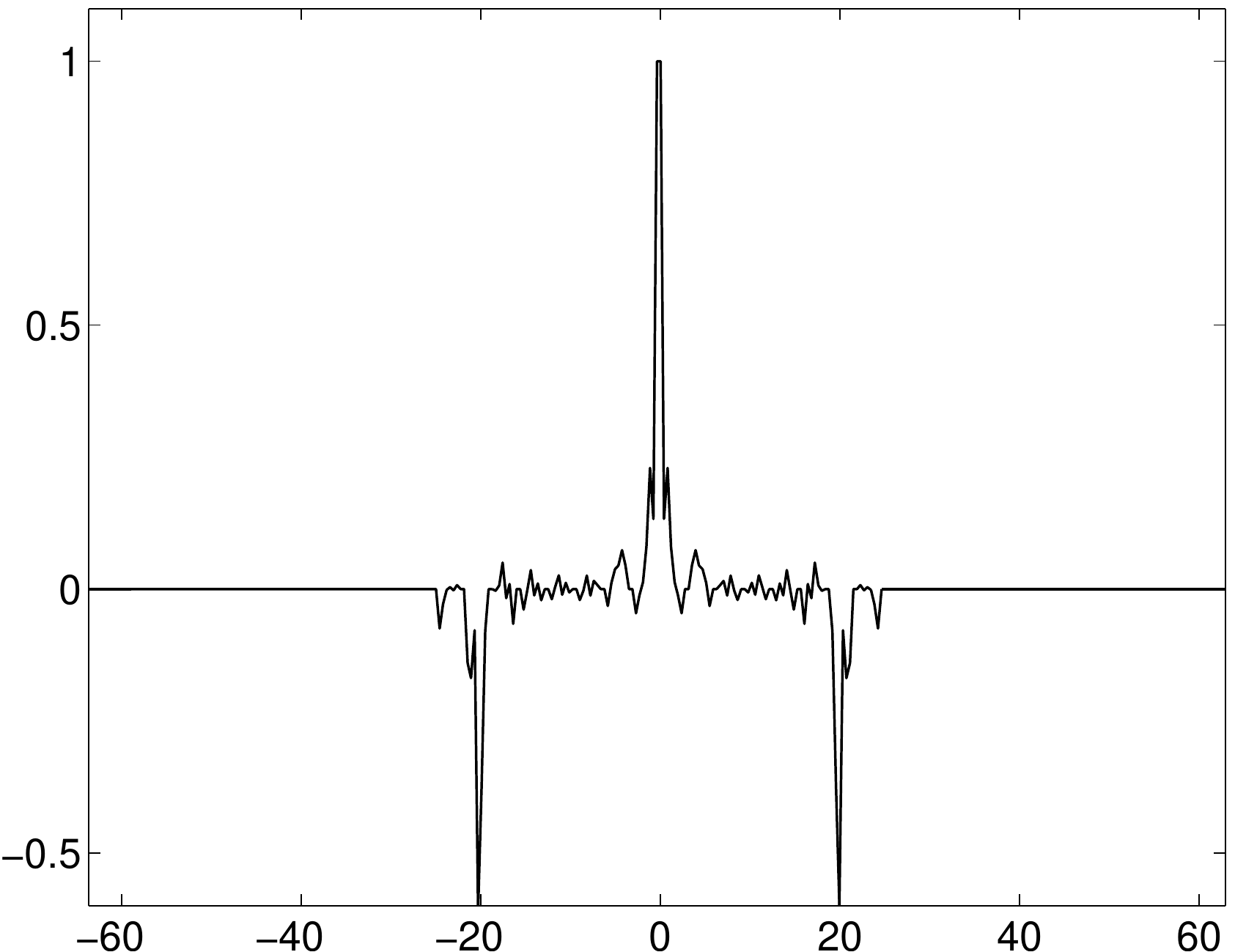}&\includegraphics[width=5cm]{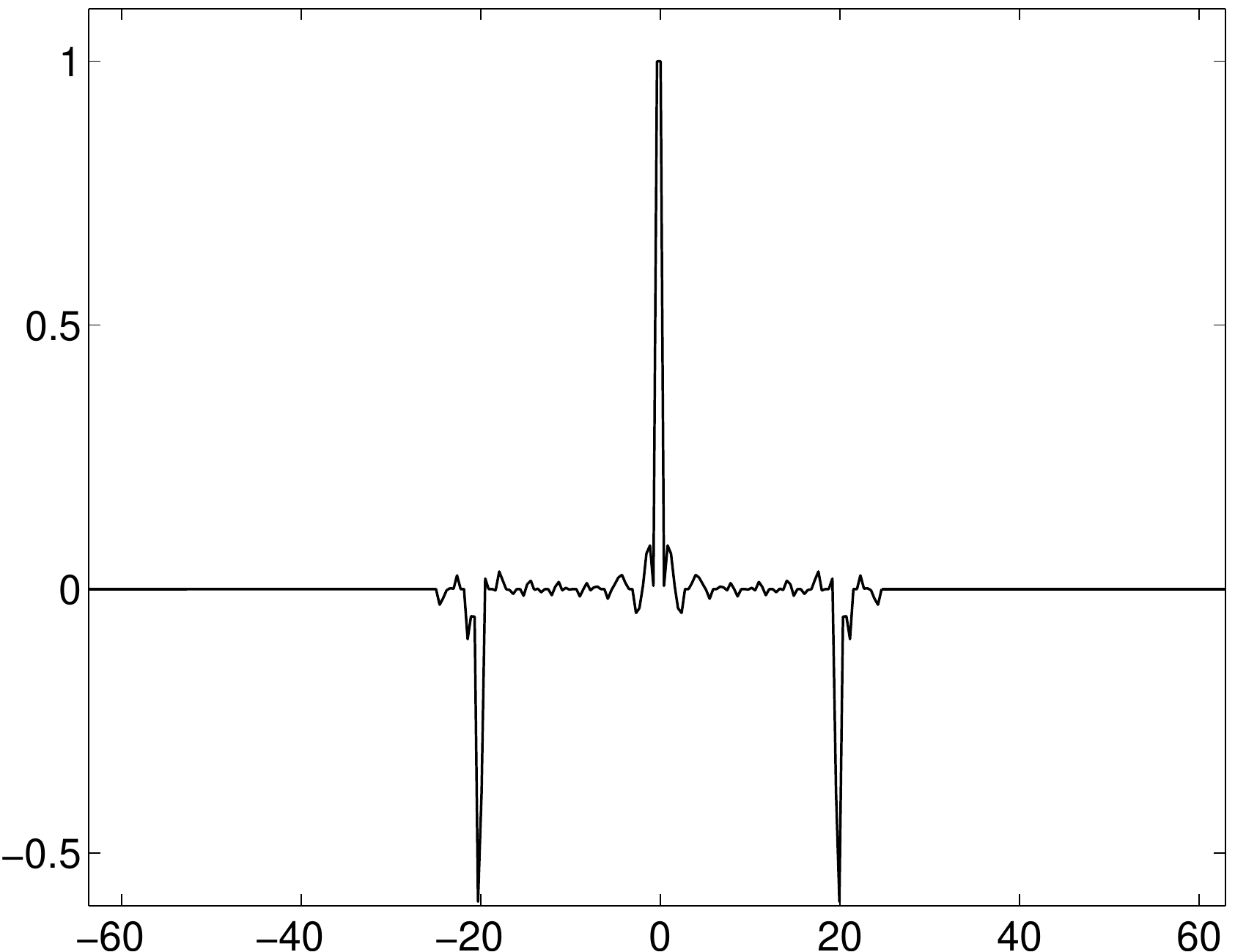}\\
\includegraphics[width=5cm]{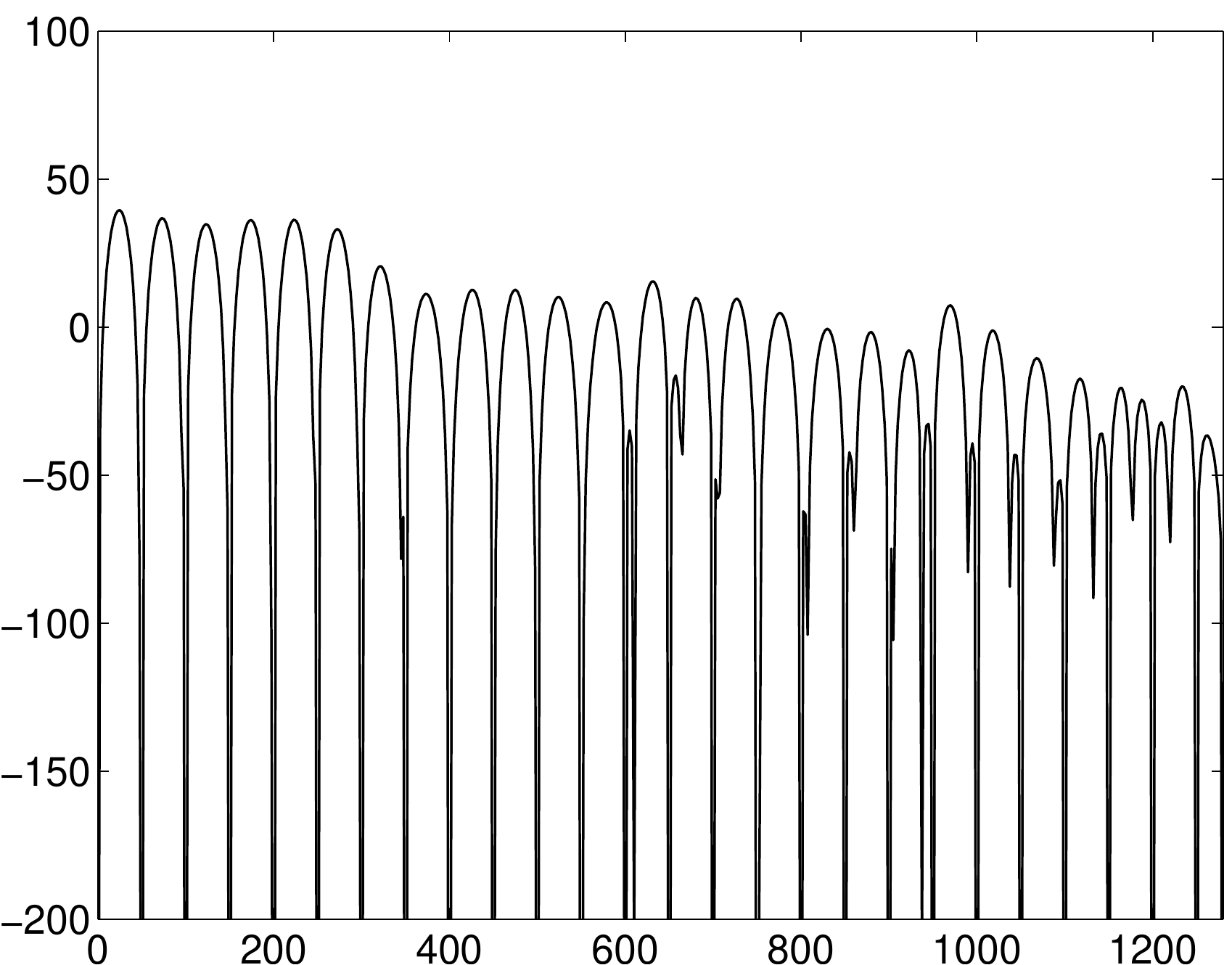} &\includegraphics[width=5cm]{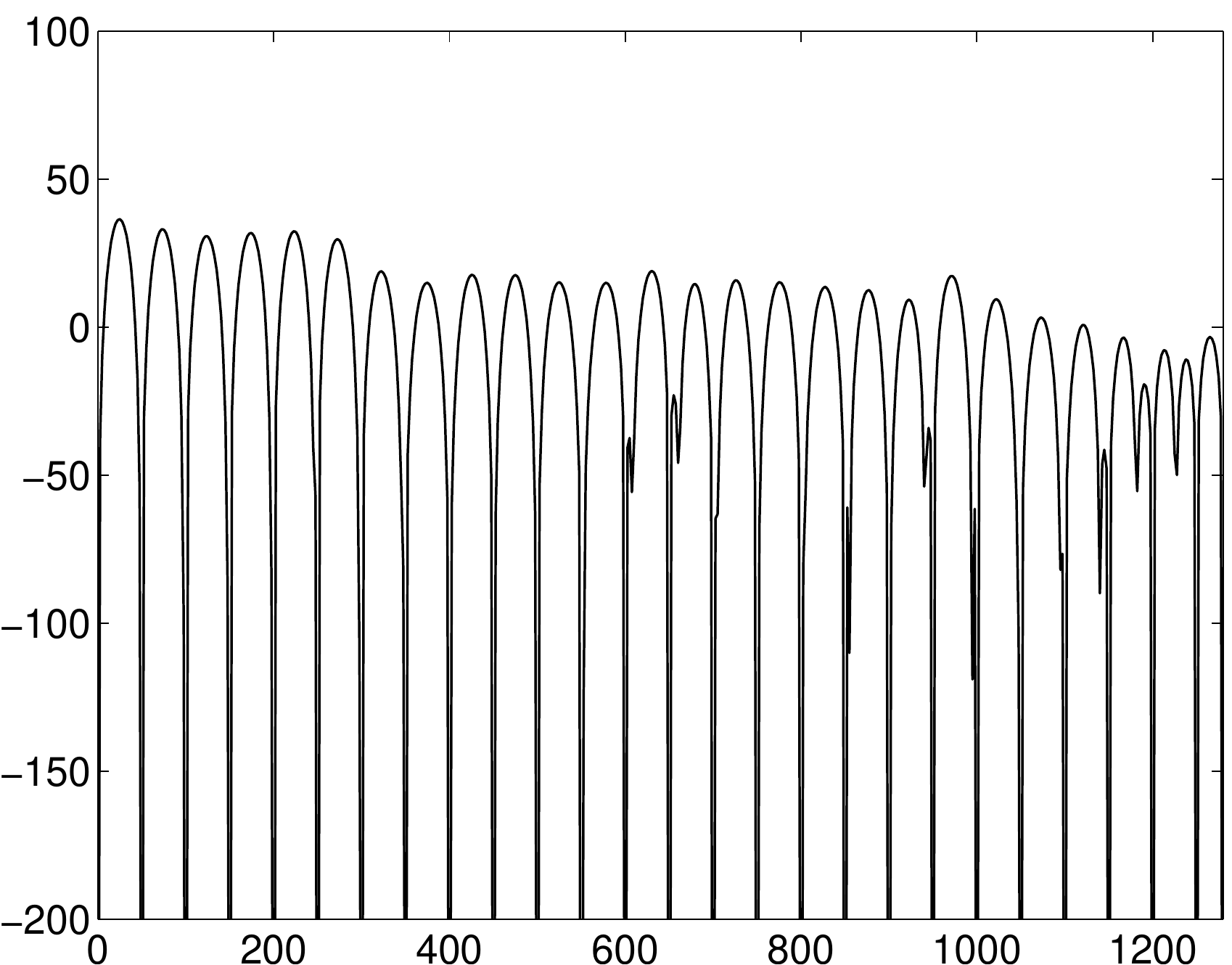} &\includegraphics[width=5cm]{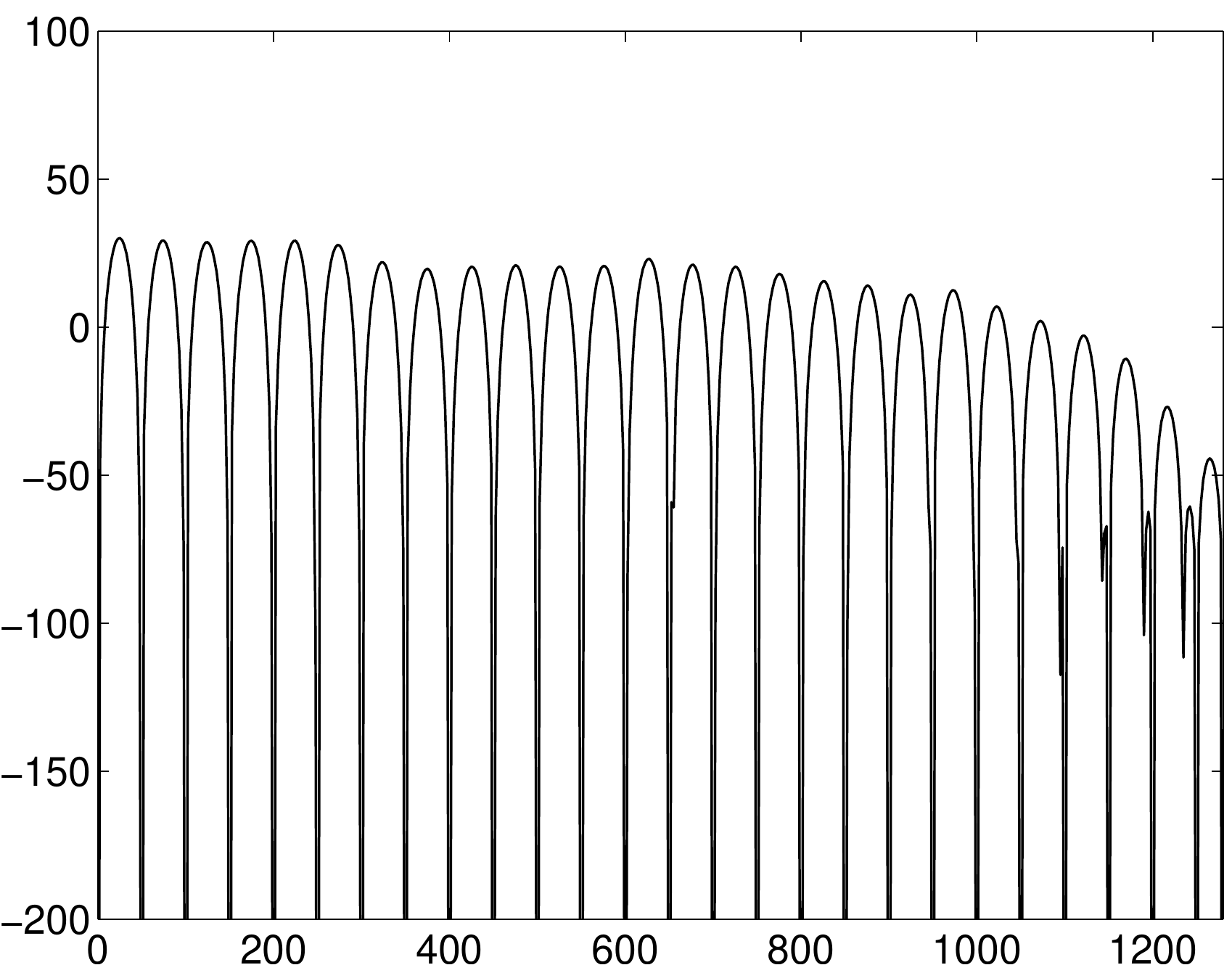}\\
 \end{tabular}
  \caption{Results obtained from \eqref{eq:cdf_epi} for $\beta=2$ and
  $\varepsilon =800 $ (first column), $\varepsilon = 5000$ (second
  column), $\varepsilon = 10000$ (third column). First row: pulse $x$,
  second row: Fourier transform $\chi$ of the pulse.
  \label{fi:res_fil_des_3}}
\end{figure*}


\section{Conclusions}\label{sec:con}
We have proposed a new epigraphical technique to deal with constrained convex variational formulations of inverse problems
with the help of proximal algorithms. In this paper, our attention has been turned to constraints based on distance functions and weighted $\bell_{1,p}$-norms with $p\in \{2,\pinf\}$. In the context of 1D signals,
we have shown that constraints based on distance functions are useful for pulse shape design. In the context of images, we have used $\bell_{1,p}$-norm constraints to promote block-sparsity of analysis representations. The obtained results demonstrate the better performance of non-local measures in terms of image quality. Our results also show that the $\bell_{1,2}$-norm has to be preferred over the $\bell_{1,\infty}$-norm for image recovery problems. However, it would be interesting to consider alternative applications of $\bell_{1,\infty}$-norms such as regression problems \cite{Tibshirani_R_1996_j-r-stat-s-b_regression_ss,Yuan_M_2006_j-r-stat-s-b_model_ser}. Furthermore, the experimental part indicates that the epigraphical method converges faster than the approach based on the direct computation of the projections via standard iterative solutions. Parallelization of our codes should even allow us to accelerate them \cite{Gaetano2012}. Note that, although the considered application involves two constraint sets, the proposed approach can handle an arbitrary number of convex constraints. The epigraphical approach could also be used to develop approximation methods for addressing more general convex constraints.

\section*{References}

\begin{thebibliography}{10}

\bibitem{Rockafellar_R_1976_j-siam-jco}
R.~T. Rockafellar.
\newblock Monotone operators and the proximal point algorithm.
\newblock {\em SIAM J. Control Optim.}, 14:877--898, 1976.

\bibitem{Combettes_P_2010_inbook_proximal_smsp}
P.~L. Combettes and J.-C. Pesquet.
\newblock Proximal splitting methods in signal processing.
\newblock In H.~H. Bauschke, R.~S. Burachik, P.~L. Combettes, V.~Elser, D.~R.
  Luke, and H.~Wolkowicz, editors, {\em Fixed-Point Algorithms for Inverse
  Problems in Science and Engineering}, pages 185--212. Springer-Verlag, New
  York, 2011.

\bibitem{Bach_F_2012_j-ftml_opt_sip}
F.~Bach, R.~Jenatton, J.~Mairal, and G.~Obozinski.
\newblock Optimization with sparsity-inducing penalties.
\newblock {\em Foundations and Trends in Machine Learning}, 4(1):1--106, 2012.

\bibitem{Guerquin_M_2011_j-ieee-tmi_fas_wbr}
M.~Guerquin-Kern, M.~H\"aberlin, K.P. Pruessmann, and M.~Unser.
\newblock A fast wavelet-based reconstruction method for magnetic resonance
  imaging.
\newblock {\em IEEE Trans. Med. Imag.}, 30(9):1649--1660, Sep. 2011.

\bibitem{Dupe_FX_2008_ip_proximal_ifdpniusr}
F.-X. Dup\'e, M.~J. Fadili, and J.-L. Starck.
\newblock A proximal iteration for deconvolving {P}oisson noisy images using
  sparse representations.
\newblock {\em IEEE Trans. Image Process.}, 18(2):310--321, Feb. 2009.

\bibitem{Aujol_JF_2006_ijcv_structure_tidmaps}
J.-F. Aujol, G.~Gilboa, T.~Chan, and S.~Osher.
\newblock Structure-texture image decomposition - modeling, algorithms, and
  parameter selection.
\newblock {\em Int. J. Comp. Vis.}, 67(1):111--136, Apr. 2006.

\bibitem{Briceno_L_2011_j-math-imaging-vis_pro_ami}
L.~M. Brice\~{n}o{-}Arias, P.~L. Combettes, J.-C. Pesquet, and N.~Pustelnik.
\newblock Proximal algorithms for multicomponent image recovery problems.
\newblock {\em J. Math. Imag. Vis.}, 41(1):3--22, Sep. 2011.

\bibitem{Theodoridis_S_2011_j-ieee-spm_adaptive_lwp}
S.~Theodoridis, K.~Slavakis, and I.~Yamada.
\newblock Adaptive learning in a world of projections.
\newblock {\em IEEE Signal Process. Mag.}, 28(1):97--123, Jan. 2011.

\bibitem{Chaux_C_2012_j-miv_parallel_psm}
C.~Chaux, M.~El~Gheche, J.~Farah, J.-C. Pesquet, and B.~Pesquet-Popescu.
\newblock A parallel proximal splitting method for disparity estimation from
  multicomponent images under illumination variation.
\newblock {\em J. Math. Imag. Vis.}, 47(3):167--178, November 2013.

\bibitem{Combettes_PL_2005_j-siam-mms_Signal_rbpfbs}
P.~L. Combettes and V.~R. Wajs.
\newblock Signal recovery by proximal forward-backward splitting.
\newblock {\em Multiscale Model. and Simul.}, 4(4):1168--1200, Nov. 2005.

\bibitem{Chaux_C_2007_j-ip_variational_ffbip}
C.~Chaux, P.~L. Combettes, J.-C. Pesquet, and V.~R. Wajs.
\newblock A variational formulation for frame-based inverse problems.
\newblock {\em Inverse Problems}, 23(4):1495--1518, Jun. 2007.

\bibitem{Combettes_PL_2007_istsp_Douglas_rsatncvsr}
P.~L. Combettes and J.-C. Pesquet.
\newblock A {D}ouglas-{R}achford splitting approach to nonsmooth convex
  variational signal recovery.
\newblock {\em IEEE J. Selected Topics Signal Process.}, 1(4):564--574, Dec.
  2007.

\bibitem{Combettes_PL_2008_j-ip_proximal_apdmfscvip}
P.~L. Combettes and J.-C. Pesquet.
\newblock A proximal decomposition method for solving convex variational
  inverse problems.
\newblock {\em Inverse Problems}, 24(6), Dec. 2008.

\bibitem{Galatsanos_N_1992_tip_methods_crpenvirtr}
N.~P. Galatsanos and A.~K. Katsaggelos.
\newblock Methods for choosing the regularization parameter and estimating the
  noise variance in image restoration and their relation.
\newblock {\em IEEE Trans. Image Process.}, 1(3):322--336, Jul. 1992.

\bibitem{Hansen_P_1993_j-siam-sci-comp_use_lcr}
P.~C. Hansen and D.~P. O'Leary.
\newblock The use of the {L}-curve in the regularization of discrete ill-posed
  problems.
\newblock {\em SIAM J. Sci. Comput.}, 14(6):1487--1503, 1993.

\bibitem{Pizurica_A_2006_j-ieee-tip_est_pps}
A.~Pizurica and W.~Philips.
\newblock Estimating the probability of the presence of a signal of interest in
  multiresolution single- and multiband image denoising.
\newblock {\em IEEE Trans. Image Process.}, 15(3):654--665, Mar. 2006.

\bibitem{Ramani_S_2008_tip_monte_csbborpgda}
S.~Ramani, T.~Blu, and M.~Unser.
\newblock {M}onte-{C}arlo {SURE}: A black-box optimization of regularization
  parameters for general denoising algorithms.
\newblock {\em IEEE Trans. Image Process.}, 17(9):1540--1554, Sep. 2008.

\bibitem{Chaari_L_2010_j-tsp_hierarchical_bmfr}
L.~Chaari, J.-C. Pesquet, J.-Y. Tourneret, P.~Ciuciu, and A.~Benazza-Benyahia.
\newblock A hierarchical bayesian model for frame representation.
\newblock {\em IEEE Trans. Signal Process.}, 58(11):5560--5571, Nov. 2010.

\bibitem{Youla_DC_1982_tmi_POCS_irbtmopocs}
D.~C. Youla and H.~Webb.
\newblock Image restoration by the method of convex projections. {P}art {I} -
  theory.
\newblock {\em IEEE Trans. Med. Imag.}, 1(2):81--94, Oct. 1982.

\bibitem{Trussell_H_1984_tassp_feasible_ssp}
H.~J. Trussell and M.~R. Civanlar.
\newblock The feasible solution in signal restoration.
\newblock {\em IEEE Trans. Acous., Speech Signal Process.}, 32(2):201--212,
  Apr. 1984.

\bibitem{Combettes_PL_1994_tsp_Inconsistent_sfplssiaps}
P.~L. Combettes.
\newblock Inconsistent signal feasibility problems : least-squares solutions in
  a product space.
\newblock {\em IEEE Trans. Signal Process.}, 42(11):2955--2966, Nov. 1994.

\bibitem{Kose_K_2012_p-icassp_fil_vmd}
K.~Kose, V.~Cevher, and A.~E. Cetin.
\newblock Filtered variation method for denoising and sparse signal processing.
\newblock In {\em Proc. Int. Conf. Acoust., Speech Signal Process.}, Kyoto,
  Japan, March, 27-30 2012.

\bibitem{Teuber_T_2012_j-inv-prob_minimization_pes}
T.~Teuber, G.~Steidl, and R.~H. Chan.
\newblock Minimization and parameter estimation for seminorm regularization
  models with {I}-divergence constraints.
\newblock {\em Inverse Problems}, 29:x+29, 2013.

\bibitem{Stuck_R_2012_j-ip_iteratively_rgn}
R.~St\"uck, M.~Burger, and T.~Hohage.
\newblock The iteratively regularized {G}auss?{N}ewton method with convex
  constraints and applications in {4P}i microscopy.
\newblock {\em Inverse Problems}, 28:x+17, 2012.

\bibitem{Ono_S_2013_p-icassp_poisson_irl}
S.~Ono and I.~Yamada.
\newblock Poisson image restoration with likelihood constraint via hybrid
  steepest descent method.
\newblock In {\em Proc. Int. Conf. Acoust., Speech Signal Process.}, pages x+5,
  Vancouver, Canada, May 26 - 31 2013.

\bibitem{Steidl_G_2012_j-math-imaging-vis_hom_pcc}
R.~Ciak, B.~Shafei, and G.~Steidl.
\newblock Homogeneous penalizers and constraints in convex image restoration.
\newblock {\em J. Math. Imag. Vis.}, 2012.

\bibitem{Rudin_L_1992_tv_atvmaopiip}
L.~Rudin, S.~Osher, and E.~Fatemi.
\newblock Nonlinear total variation based noise removal algorithms.
\newblock {\em Physica D}, 60(1-4):259--268, Nov. 1992.

\bibitem{Gilboa_G_2009_j-siam-mms_nonlocal_oai}
G.~Gilboa and S.~Osher.
\newblock {Nonlocal Operators with Applications to Image Processing}.
\newblock {\em Multiscale Model. and Simul.}, 7(3):1005, 2009.

\bibitem{VanDenBerg_E_2008_j-siam-sci-comp_pro_pfb}
E.~Van Den~Berg and M.~P. Friedlander.
\newblock Probing the {P}areto frontier for basis pursuit solutions.
\newblock {\em SIAM J. Sci. Comput.}, 31(2):890--912, Nov. 2008.

\bibitem{Quattoni_A_2009_p-icml_efficient_plr}
A.~Quattoni, X.~Carreras, M.~Collins, and T.~Darrell.
\newblock An efficient projection for $\ell_{1,\infty}$ regularization.
\newblock In {\em International Conference on Machine Learning}, Montreal,
  Quebec, Jun., 14-18 2009.

\bibitem{Tikhonov_A_1963_j-sov-mat-dok_tikhonov_ripp}
A.~Tikhonov.
\newblock Tikhonov regularization of incorrectly posed problems.
\newblock {\em Soviet Mathematics Doklady}, 4:1624--1627, 1963.

\bibitem{Chouzenoux_E_2011_j-ieee-tip_majorize_mss}
E.~Chouzenoux, J.~Idier, and S.~Moussaoui.
\newblock A {M}ajorize-{M}inimize strategy for subspace optimization applied to
  image restoration.
\newblock {\em IEEE Trans. Image Process.}, 20(6):1517--1528, Jun. 2011.

\bibitem{Martinet_B_1970_j-iro_reg_iva}
B.~Martinet.
\newblock R\'{e}gularisation d'in\'{e}quations variationnelles par
  approximations successives.
\newblock {\em Informat. Recherche Op\'{e}rationnelle}, 4:154--158, 1970.

\bibitem{Aubert_G_1980_j-boll_minimisation_fnc}
G.~Aubert and R.~Tahraoui.
\newblock Sur la minimisation d'une fonctionnelle non convexe, non
  diff\'erentiable en dimension 1.
\newblock {\em Bolletino UMI}, 5(17B), 1980.

\bibitem{BenTal_A_1989_j-lnm_smoothing_tno}
A.~Ben-Tal and M.~Teboulle.
\newblock A smoothing technique for nondifferentiable optimization problems.
\newblock {\em Lecture Notes in Mathematics}, 1405:1--11, 1989.

\bibitem{Hiriart_Urruty_1996_book_convex_amaIf}
J.-B. Hiriart-Urruty and C.~Lemar\'echal.
\newblock {\em Convex analysis and minimization algorithms, Part {I} :
  Fundamentals}, volume 305 of {\em Grundlehren der mathematischen
  Wissenschaften}.
\newblock Springer-Verlag, Berlin, Heidelberg, N.Y., 2nd edition, 1996.

\bibitem{Tseng_P_2001_j-ota_conv_bcd}
P.~Tseng.
\newblock Convergence of a block coordinate descent method for
  nondifferentiable minimization.
\newblock {\em Journal of Opt. Theory and Applications}, 109(3):475--494, Jun.
  2001.

\bibitem{Wright_S_1997_book_primal_dip}
S.~J. Wright.
\newblock {\em Primal-dual interior-point methods}.
\newblock SIAM, Philadelphia, PA, 1997.

\bibitem{Bregman_LM_1965_sm_POCS_tmospfacpocs}
L.~M. Bregman.
\newblock The method of successive projection for finding a common point of
  convex sets.
\newblock {\em Soviet Math. Dokl.}, 6:688--692, 1965.

\bibitem{Gurin_LG_1967_zvmmf_Projection_mffacpocs}
L.~G. Gurin, B.~T. Polyak, and E.~V. Raik.
\newblock Projection methods for finding a common point of convex sets.
\newblock {\em Zh. Vychisl. Mat. Mat. Fiz.}, 7(6):1211--1228, 1967.

\bibitem{Combettes_P_1993_pieee_fou_ste}
P.~L. Combettes.
\newblock The foundations of set theoretic estimation.
\newblock {\em Proceedings of the IEEE}, 81(2):182--208, Feb. 1993.

\bibitem{Combettes_P_1997_j-ieee-tip_con_sti}
P.~L. Combettes.
\newblock Convex set theoretic image recovery by extrapolated iterations of
  parallel subgradient projections.
\newblock {\em IEEE Trans. Image Process.}, 6(4):492--506, Apr. 1997.

\bibitem{Censor_Y_2012_j-comp-opt-appl_effectiveness_pmc}
Y.~Censor, W.~Chen, P.~L. Combettes, R.~Davidi, and G.~T. Herman.
\newblock On the effectiveness of projection methods for convex feasibility
  problems with linear inequality constraints.
\newblock {\em Comput. Optim. Appl.}, 51(3):1065--1088, 2012.

\bibitem{Bauschke_H_2011_book_con_amo}
H.~H. Bauschke and P.~L. Combettes.
\newblock {\em Convex Analysis and Monotone Operator Theory in Hilbert Spaces}.
\newblock Springer, New York, 2011.

\bibitem{Combettes_PL_2003_tsp_Block_abiscsmfqsr}
P.~L. Combettes.
\newblock A block-iterative surrogate constraint splitting method for quadratic
  signal recovery.
\newblock {\em IEEE Trans. Signal Process.}, 51(7):1771--1782, Jul. 2003.

\bibitem{Yamada_I_2001_inbook_hybrid_sdm}
I.~Yamada.
\newblock The hybrid steepest descent method for the variational inequality
  problem over the intersection of fixed point sets of nonexpansive mappings.
\newblock In {\em Inherently Parallel Algorithms for Feasibility and
  Optimization and their Applications}, pages 473--504. Elsevier, 2001.

\bibitem{Slavakis_K_2006_j-nfao_adaptive_psm}
K.~Slavakis, I.~Yamada, and N.~Ogura.
\newblock The adaptive projected subgradient method over the fixed point set of
  strongly attracting nonexpansive mappings.
\newblock {\em Numerical Functional Analysis and Optimization},
  27(7-8):905--930, Nov. 2006.

\bibitem{Bouboulis_P_2012_j-ieee-tnnls_adaptive_lcr}
P.~Bouboulis, K.~Slavakis, and S.~Theodoridis.
\newblock Adaptive learning in complex reproducing kernel {H}ilbert spaces
  employing {W}irtinger's subgradients.
\newblock {\em IEEE Trans Neural Networks}, 23(3):425--438, Mar. 2012.

\bibitem{Moreau_J_1965_bsmf_Proximite_eddueh}
J.~J. Moreau.
\newblock Proximit\'e et dualit\'e dans un espace hilbertien.
\newblock {\em Bull. Soc. Math. France}, 93:273--299, 1965.

\bibitem{Daubechies_I_2004_cpamath_iterative_talipsc}
I.~Daubechies, M.~Defrise, and C.~De~Mol.
\newblock An iterative thresholding algorithm for linear inverse problems with
  a sparsity constraint.
\newblock {\em Comm. Pure Applied Math.}, 57(11):1413--1457, Nov. 2004.

\bibitem{Figueiredo_M_2007_j-ieee-sel-topics-sp_gra_psr}
M.~Figueiredo, R.~Nowak, and S.~Wright.
\newblock Gradient projection for sparse reconstruction: application to
  compressed sensing and other inverse problems.
\newblock {\em IEEE J. Selected Topics Signal Process.: Special Issue on Convex
  Optimization Methods for Signal Processing}, 5981(4):586--598, Dec. 2007.

\bibitem{Beck_A_2009_j-siam-is_fast_istalip}
A.~Beck and M.~Teboulle.
\newblock A fast iterative shrinkage-thresholding algorithm for linear inverse
  problems.
\newblock {\em SIAM J. Imaging Sci.}, 2(1):183--202, 2009.

\bibitem{Fornasier_M_2009_j-sjna_subspace_cmtvl1m}
M.~Fornasier and C.-B. Sch\"onlieb.
\newblock Subspace correction methods for total variation and
  $\ell_1$-minimization.
\newblock {\em IMA J. Numer. Anal.}, 47(8):3397--3428, 2009.

\bibitem{Steidl_G_2010_j-math-imaging-vis_removing_mndrsm}
G.~Steidl and T.~Teuber.
\newblock Removing multiplicative noise by {D}ouglas-{R}achford splitting
  methods.
\newblock {\em J. Math. Imag. Vis.}, 36(3):168--184, 2010.

\bibitem{Pesquet_J_2012_j-pjpjoo_par_ipo}
J.-C. Pesquet and N.~Pustelnik.
\newblock A parallel inertial proximal optimization method.
\newblock {\em Pac. J. Optim.}, 8(2):273--305, Apr. 2012.

\bibitem{Chen_G_1994_j-mp_pro_bdm}
G.~Chen and M.~Teboulle.
\newblock A proximal-based decomposition method for convex minimization
  problems.
\newblock {\em Math. Program.}, 64:81--101, 1994.

\bibitem{Esser_E_2010_j-siam-is_gen_fcf}
E.~Esser, X.~Zhang, and T.~Chan.
\newblock A general framework for a class of first order primal-dual algorithms
  for convex optimization in imaging science.
\newblock {\em SIAM J. Imaging Sci.}, 3(4):1015--1046, 2010.

\bibitem{Chambolle_A_2010_first_opdacpai}
A.~Chambolle and T.~Pock.
\newblock A first-order primal-dual algorithm for convex problems with
  applications to imaging.
\newblock {\em J. Math. Imag. Vis.}, 40(1):120--145, 2011.

\bibitem{Briceno_L_2011_j-siam-opt_mon_ssm}
L.~M. Brice\~{n}o{-}Arias and P.~L. Combettes.
\newblock A monotone + skew splitting model for composite monotone inclusions
  in duality.
\newblock {\em SIAM J. Opt.}, 21(4):1230--1250, 2011.

\bibitem{Combettes_P_2011_j-svva_pri_dsa}
P.~L. Combettes and J.-C. Pesquet.
\newblock Primal-dual splitting algorithm for solving inclusions with mixtures
  of composite, {L}ipschitzian, and parallel-sum type monotone operators.
\newblock {\em Set-Valued Var. Anal.}, 2011.

\bibitem{Vu_B_2011_j-acm_spl_adm}
B.~C. V\~{u}.
\newblock A splitting algorithm for dual monotone inclusions involving
  cocoercive operators.
\newblock {\em Adv. Comput. Math.}, 38(3):667--681, April 2013.

\bibitem{Condat_L_2012}
L.~Condat.
\newblock A primal-dual splitting method for convex optimization involving
  {L}ipschitzian, proximable and linear composite terms.
\newblock {\em J. Optim. Theory Appl.}, 158(2):460--479, August 2012.

\bibitem{Chen_2013_ip_primal_dual_fpa}
P.~Chen, J.~Huang, and X.~Zhang.
\newblock A primal-dual fixed point algorithm for convex separable minimization
  with applications to image restoration.
\newblock {\em Inverse Problems}, 29(2), 2013.

\bibitem{Pustelnik_N_2012_j-ieee-tsp_rel_tfc}
N.~Pustelnik, J.-C. Pesquet, and C.~Chaux.
\newblock Relaxing tight frame condition in parallel proximal methods for
  signal restoration.
\newblock {\em IEEE Trans. Signal Process.}, 60(2):968--973, Feb. 2012.

\bibitem{Couprie_C_2012}
C.~Couprie, L.~Grady, L.~Najman, J.-C. Pesquet, and H.~Talbot.
\newblock Dual constrained {TV}-based regularization on graphs.
\newblock {\em SIAM Journal on Imaging Sciences}, 6(3):1246--1273, 2013.

\bibitem{Setzer_S_2009_j-jvcir_deblurring_pibsbt}
S.~Setzer, G.~Steidl, and T.~Teuber.
\newblock Deblurring {P}oissonian images by split {B}regman techniques.
\newblock {\em J. Visual Communication and Image Representation},
  21(3):193--199, Apr. 2010.

\bibitem{Figueiredo_M_2010_t-ip_restoration_piado}
M.~A.~T. Figueiredo and J.~M. Bioucas-Dias.
\newblock Restoration of {P}oissonian images using alternating direction
  optimization.
\newblock {\em IEEE Trans. Image Process.}, 19(12):3133--3145, Dec. 2010.

\bibitem{Afonso_M_2009_j-tip_augmented_lacofiip}
M.~V. Afonso, J.~M. Bioucas-Dias, and M.~A.~T. Figueiredo.
\newblock An augmented {L}agrangian approach to the constrained optimization
  formulation of imaging inverse problems.
\newblock {\em IEEE Trans. Image Process.}, 20(3):681--695, Mar. 2011.

\bibitem{Boyd2011_j-found-tml_distributed_osl_admm}
S.~Boyd, N.~Parikh, E.~Chu, B.~Peleato, and J.~Eckstein.
\newblock Distributed optimization and statistical learning via the alternating
  direction method of multipliers.
\newblock {\em Found. Trends Machine Learn.}, 8(1):1--122, 2011.

\bibitem{Peyre_G_2011_p-eusipco_gro_sop}
G.~Peyr\'e and J.~Fadili.
\newblock Group sparsity with overlapping partition functions.
\newblock In {\em Proc. Eur. Sig. and Image Proc. Conference}, pages x+5,
  Barcelona, Spain, Aug. 29 -- Sept. 2, 2011.

\bibitem{Ekeland_I_1999_book_Convex_aavp}
I.~Ekeland and R.~T\'emam.
\newblock {\em Convex analysis and variational problems}.
\newblock SIAM, Philadelphia, 1999.

\bibitem{Combettes_PL_2007_jopt_Proximal_tafmoob}
P.~L. Combettes and J.-C Pesquet.
\newblock Proximal thresholding algorithm for minimization over orthonormal
  bases.
\newblock {\em SIAM J. Opt.}, 18(4):1351--1376, Nov. 2007.

\bibitem{Alizadeh2001_j_math_second_order_cone}
F.~Alizadeh and D.~Goldfarb.
\newblock Second-order cone programming.
\newblock {\em Mathematical Programming}, 95:3--51, 2003.

\bibitem{Faraut1994_book_Symmetric_cones}
U.~Faraut and A.~Kor\'{a}nyi.
\newblock {\em Analysis on Symmetric Cones}.
\newblock Oxford Mathematical Monographs. Oxford University Press, New York,
  1994.

\bibitem{Pang2002_j_math_semismooth_homeomorphisms}
J.~S. Pang, D.~F. Sun, and J.~Sun.
\newblock Semismooth homeomorphisms and strong stability of semidefinite and
  {L}orentz complementarity problems.
\newblock {\em Math. Oper. Res}, 28:39--63, 2003.

\bibitem{Wu_J_2011_j-ieee-tip_mul_csi}
J.~Wu, F.~Liu, L.~C. Jiao, X.~Wang, and B.~Hou.
\newblock Multivariate compressive sensing for image reconstruction in the
  wavelet domain.
\newblock {\em IEEE Trans. Image Process.}, 20(12):3483--3494, Dec. 2011.

\bibitem{Aujol_JF_2009_jmiv_firstorder_atvbir}
J.-F. Aujol.
\newblock Some first-order algorithms for total variation based image
  restoration.
\newblock {\em J. Math. Imag. Vis.}, 34(3):307--327, Jul. 2009.

\bibitem{Turlach_B_2005_j-technometrics_simultaneous_vs}
B.~A. Turlach, W.~N. Venables, and S.~J. Wright.
\newblock Simultaneous variable selection.
\newblock {\em Technometrics}, 47(3):349--363, Aug. 2005.

\bibitem{Chen_Y_2012_j-ieee-tsp_recursive_ll}
Y.~Chen and A.~O. Hero.
\newblock Recursive $\ell_{1,\infty}$ lasso.
\newblock {\em IEEE Trans. Signal Process.}, 60(8):3978--3987, Aug. 2012.

\bibitem{Candes_2008_j-four-anal-appl_enhancing_srl}
E.~J. Cand\`es, M.~B. Wakin, and S.~Boyd.
\newblock Enhancing sparsity by reweighted $\ell_1$ minimization.
\newblock {\em J. Fourier Anal. Appl.}, 14(5):877--905, Dec. 2008.

\bibitem{Bayram2012}
I.~Bayram and M.~Kamasak.
\newblock Directional total variation.
\newblock {\em IEEE Signal Processing Letters}, 2012.

\bibitem{Eckstein1994_j_optim_parallel_ADMM}
J.~Eckstein.
\newblock Parallel alternating direction multiplier decomposition of convex
  programs.
\newblock {\em J. Optim. Theory Appl.}, 80(1):39?--62, 1994.

\bibitem{Afonso_M_2010_j-tip_fast_iruvsco}
M.~V. Afonso, J.~M. Bioucas-Dias, and M.~A.~T. Figueiredo.
\newblock Fast image recovery using variable splitting and constrained
  optimization.
\newblock {\em IEEE Trans. Image Process.}, 19(9):2345--2356, 2010.

\bibitem{Weiss_P_2009_Efficient_stvm}
P.~Weiss, L.~Blanc-F{\'e}raud, and G.~Aubert.
\newblock Efficient schemes for total variation minimization under constraints
  in image processing.
\newblock {\em SIAM J. Sci. Comput.}, 31(3):2047--2080, April 2009.

\bibitem{Fadili_J_2011_tip_tv_proj_fos}
J.~M. Fadili and G.~Peyr\'{e}.
\newblock Total variation projection with first order schemes.
\newblock {\em Trans. Img. Proc.}, 20(3):657--669, March 2011.

\bibitem{Buades_A_2005_j-siam-mms_review_idawno}
A.~Buades, B.~Coll, and J.~Morel.
\newblock A review of image denoising algorithms, with a new one.
\newblock {\em Multiscale Model. and Simul.}, 4(2):490--530, 2005.

\bibitem{Mallat_S_1999_ap_wavelet_awtosp}
S.~Mallat.
\newblock {\em A wavelet tour of signal processing}.
\newblock Academic Press, San Diego, USA, 1997.

\bibitem{Cai_JF_2012_restoration_TV_frame}
J.-F. Cai, B.~Dong, S.~Osher, and Z.~Shen.
\newblock Image restoration: Total variation, wavelet frames and beyond.
\newblock {\em J. Amer. Math. Soc.}, 25(4):1033--1089, 2012.

\bibitem{Zhang_X_2010_j-siam-is_bregmanized_nrd}
X.~Zhang, M.~Burger, X.~Bresson, and S.~Osher.
\newblock Bregmanized nonlocal regularization for deconvolution and sparse
  reconstruction.
\newblock {\em SIAM J. Imaging Sci.}, 3(3):253--276, 2010.

\bibitem{Peyre_G_2011_NL_reg_inv_prob}
G.~Peyr{\'e}, S.~Bougleux, and L.~D. Cohen.
\newblock Non-local regularization of inverse problems.
\newblock {\em Inverse Problems and Imaging}, 5(2):511--530, 2011.

\bibitem{Peyre_G_2011_review_adapt_image}
G.~Peyr{\'e}.
\newblock A review of adaptive image representations.
\newblock {\em IEEE Journal of Selected Topics in Signal Processing},
  5(5):896--911, Sep. 2011.

\bibitem{Foi_A_2012_p-spie_foveated_ssnif}
A.~Foi and G.~Boracchi.
\newblock Foveated self-similarity in nonlocal image filtering.
\newblock In {\em Proc. SPIE Electronic Imaging 2012, Human Vision and
  Electronic Imaging XVII}, volume 8291, Burlingame (CA), USA, Jan. 2012.

\bibitem{Gilboa_G_2007_j-siam-mms_nonlocal_irss}
G.~Gilboa and S.~Osher.
\newblock Nonlocal linear image regularization and supervised segmentation.
\newblock {\em Multiscale Model. and Simul.}, 6(2):595--630, 2007.

\bibitem{Wang_Z_2009_spm_MES_lioli}
Z.~Wang and A.~C. Bovik.
\newblock Mean squared error: love it or leave it?
\newblock {\em IEEE Signal Process. Mag.}, 26(1):98--117, Jan. 2009.

\bibitem{Nobakht_R_1995_j-ieee-tcom_opt_psd}
R.~A. Nobakht and M.~R. Civanlar.
\newblock Optimal pulse shape design for digital communication systems by
  projections onto convex sets.
\newblock {\em IEEE Trans. Commun.}, 43(12):2874--2877, Dec. 1995.

\bibitem{Combettes_P_1999_j-ieee-tsp_hard_cis}
P.~L. Combettes and P.~Bondon.
\newblock Hard-constrained inconsistent signal feasibility problems.
\newblock {\em IEEE Trans. Signal Process.}, 47(9):2460--2468, Sep. 1999.

\bibitem{Tibshirani_R_1996_j-r-stat-s-b_regression_ss}
R.~Tibshirani.
\newblock Regression shrinkage and selection via the lasso.
\newblock {\em J. R. Statist. Soc. B}, 58:267--288, 1996.

\bibitem{Yuan_M_2006_j-r-stat-s-b_model_ser}
M.~Yuan and Y.~Lin.
\newblock Model selection and estimation in regression with grouped variables.
\newblock {\em J. R. Statist. Soc. B}, 68:49--67, 2006.

\bibitem{Gaetano2012}
R.~Gaetano, G.~Chierchia, and B.~Pesquet-Popescu.
\newblock Parallel implementations of a disparity estimation algorithm based on
  a proximal splitting method.
\newblock In {\em Visual Communication and Image Processing}, San Diego, USA,
  2012.

\end{thebibliography}

\end{document}